\newif\ifPDF
\newtheorem{theorem}{Theorem}[section]
\newtheorem{lemma}[theorem]{Lemma}
\newtheorem{remark}[theorem]{Remark} 
\newtheorem{corollary}[theorem]{Corollary}
\newcommand{\dlim}{\displaystyle\lim}
\newcommand{\dint}{\displaystyle\int}
\newcommand{\eps}{\varepsilon}
\newcommand{\fc}{\mathfrak{c}}
\newcommand{\bbR}{\mathbb R} \newcommand{\bbS}{\mathbb S}
\newcommand{\bnu}{{\boldsymbol \nu}}
\newcommand{\bg}{\mathbf g}
 \newcommand{\bn}{\mathbf n}
 \newcommand{\bv}{\mathbf v} 
 \newcommand{\bx}{\mathbf x} 
\newcommand{\by}{\mathbf y} \newcommand{\bz}{\mathbf z}
 \newcommand{\bH}{\mathbf H}
\newcommand{\cA}{\mathcal A} 
\newcommand{\cC}{\mathcal C}  
 \newcommand{\cF}{\mathcal F}
 \newcommand{\cH}{\mathcal H}
 \newcommand{\cJ}{\mathcal J}
\newcommand{\cK}{\mathcal K} 
\newcommand{\cM}{\mathcal M}
\newcommand{\cU}{\mathcal U}
\newcommand{\aver}[1]{\langle {#1} \rangle}
\newcommand{\wt}{\widetilde}
\newcommand{\wh}{\widehat}
\newenvironment{keywords}
{\noindent{\bf Key words.}\small}{\par\vspace{1ex}}
\newenvironment{AMS}
{\noindent{\bf AMS subject classifications 2010.}\small}{\par}
\newcommand{\DELETE}[1]{}
\title{Unique determination of absorption coefficients in a semilinear transport equation}
\author{
	Kui Ren\thanks{
		Department of Applied Physics and Applied Mathematics, Columbia University, New York, NY 10027; kr2002@columbia.edu}
	\and
	Yimin Zhong\thanks{
		Department of Mathematics, University of California, Irvine, CA 92697; yiminz@uci.edu}
}
\begin{document}

\maketitle



\begin{abstract}
	Motivated by applications in quantitative photoacoustic imaging, we study inverse problems to a semilinear radiative transport equation (RTE) where we intend to reconstruct absorption coefficients in the equation from single and multiple internal data sets. We derive uniqueness and stability results for the inverse transport problem in the absence of scattering (in which case we also derive some explicit reconstruction methods) and in the presence of known scattering.
\end{abstract}


\begin{keywords}
	Semilinear radiative transport equation, inverse transport problem, uniqueness and stability, quantitative photoacoustic imaging, two-photon absorption
\end{keywords}


\begin{AMS}
35R30, 78A46, 	80A23, 85A25, 92C55
\end{AMS}


\section{Introduction}
\label{SEC:Intro}

Let $\Omega\subseteq\bbR^d$ ($d\ge 2$) be a domain with boundary $\partial\Omega$, and $\bbS^{d-1}$ the unit sphere in $\bbR^d$. We define the phase space $X:=\Omega\times\bbS^{d-1}$ and the incoming boundary of the phase space $\Gamma_-:=\{(\bx, \bv)\ |\  (\bx, \bv)\in\partial\Omega\times\bbS^{d-1}\ \mbox{s.t.}\ -\bnu(\bx)\cdot \bv > 0\}$, $\bnu(\bx)$ being the unit outer normal vector at $\bx\in\partial\Omega$. We are interested in the semilinear radiative transport equation:
\begin{equation}\label{EQ:ERT TP}
\begin{array}{rcll}
\bv \cdot \nabla u + (\sigma_a+\sigma_s) u(\bx, \bv) + \sigma_b \aver{u} u(\bx, \bv) &=& \sigma_s(\bx) K u(\bx, \bv), &\mbox{in}\ X\,\\
u(\bx, \bv) &= & g(\bx, \bv), &\mbox{on}\ \Gamma_{-}\,
\end{array}
\end{equation}
where $\aver{u}$ denotes the average of $u(\bx, \bv)$ over the variable $\bv$, that is, 
\begin{equation}\label{EQ:Aver}
	\aver{u}:=\int_{\bbS^{d-1}} u(\bx, \bv) d\bv\,,
\end{equation}
with $d\bv$ being the \emph{normalized} surface measure on $\bbS^{d-1}$. The linear operator $K$ is defined through the relation
\begin{equation}\label{EQ:Scattering Op}
\begin{aligned}
K u(\bx, \bv) := \int_{\bbS^{d-1}} \Theta(\bv, \bv') u(\bx, \bv') d\bv',
\end{aligned}
\end{equation}
with the kernel $\Theta(\bv, \bv')$ being symmetric and satisfying the normalization conditions 
\[
\dint_{\bbS^{d-1}} \Theta(\bv, \bv') d\bv' = \dint_{\bbS^{d-1}} \Theta(\bv, \bv') d\bv = 1.
\]

Transport equations such as ~\eqref{EQ:ERT TP} often appear in the literature as the mathematical models to describe radiative transfer processes in heterogeneous media. We are interested in the application of this equation in modeling the propagation of near infra-red photons in biological tissues~\cite{Arridge-IP99,Bal-IP09,Ren-CiCP10}. In such a case, $u(\bx, \bv)$ denotes the density of the photons at position $\bx$ traveling in direction $\bv$. The coefficients $\sigma_a(\bx)$ and $\sigma_s(\bx)$ are the usual single-photon absorption and scattering coefficients respectively, and the kernel $\Theta(\bv, \bv')$ describes the probability of photons traveling in direction $\bv'$ getting scattered into direction $\bv$. The coefficient $\sigma_b(\bx)$ is called the two-photon absorption coefficient. It is used to model the two-photon absorption process, that is, the phenomenon that an electron transfers to an excited state after simultaneously absorbing two photons whose total energy exceed the electronic energy band gap. Such two-photon absorption process can also be viewed as a regular physical absorption process whose effective absorption strength, $\sigma_b \aver{u}$, depends on the local density of the photons. We refer interested readers to~\cite{BaReZh-JBO18,ReZh-SIAM18} and references therein for more details on the modeling of two-photon absorption in diffusive media.

In the rest of this paper, we study an inverse problem to the transport model~\eqref{EQ:ERT TP} where we intend to reconstruct the absorption coefficients $\sigma_a$ and $\sigma_b$ from internal data of the form
\begin{equation}\label{EQ:Data TP}
	H(\bx) := \sigma_a(\bx)\aver{u}(\bx) +\sigma_b(\bx) \aver{u}^2(\bx),\ \ \bx\in\bar\Omega\,.
\end{equation}
Such inverse problems originate from applications in quantitative photoacoustic imaging where internal data~\eqref{EQ:Data TP} can be obtained from photoacoustic measurements; see~\cite{BaJoJu-IP10,MaRe-CMS14} and references therein for recent developments in the field. In the diffusive regime, that is, when the transport model~\eqref{EQ:ERT TP} is replaced with its diffusion approximation, it has been shown in~\cite{BaReZh-JBO18,ReZh-SIAM18} that one can reconstruct all three coefficients $(\sigma_a, \sigma_b, \sigma_s)$ from a finite set of internal data of the form~\eqref{EQ:Data TP}. The objective of this work is to show that one can reconstruct uniquely $(\sigma_a, \sigma_b)$ in the semilinear transport equation~\eqref{EQ:ERT TP} from two sets of internal data.

The inverse problem we described above is closely related to an inverse problem to the linear transport equation:
\begin{equation}\label{EQ:ERT}
\begin{array}{rcll}
\bv \cdot \nabla u + (\Sigma_a+\sigma_s) u(\bx, \bv) &=& \sigma_s(\bx) K u(\bx, \bv), &\mbox{in}\ X\,\\
u(\bx, \bv) &= & g(\bx, \bv), &\mbox{on}\ \Gamma_{-}\,
\end{array}
\end{equation}
with data of the form
\begin{equation}\label{EQ:Data}
	H(\bx) := \Sigma_a(\bx)\aver{u}(\bx),\ \ \bx\in\bar\Omega\,.
\end{equation}
In fact, the semilinear transport equation~\eqref{EQ:ERT TP} can be viewed as the linear transport equation~\eqref{EQ:ERT} whose absorption coefficient $\Sigma_a$ depends on the density $\aver{u}$ in a linear manner: $\Sigma_a=\sigma_a+\sigma_b \aver{u}$.

Inverse problems to the radiative transport equation have been studied extensively in the past two decades; see for instance~\cite{Bal-IP09} for a recent review on the topic. Most of existing analytical and computational results are on the linear transport equation~\eqref{EQ:ERT}. These include, but not limited to, problems where boundary data encoded in the map $u_{|\Gamma_-}\mapsto u_{|\Gamma_+}$ and alike are available~\cite{BaJo-IPI08,BaMo-SIAM12,ChLiWa-IP18,ChSt-IP96,ChSt-CPDE96,Chung-arXiv20,DiRe-JCP14,GoYa-SIAM16,KiMo-IP06,LaLiUh-SIAM18,ReBaHi-SIAM06,StUh-MAA03,StUh-APDE08,Tamasan-IP02,Wang-AIHP99,ZhZh-SIAM18}, as well as problems where internal data of the type~\eqref{EQ:Data}~\cite{BaJoJu-IP10,MaRe-CMS14,ReZhZh-IP15,WaZh-IP18,HaNeNgRa-SIAM18,TaCoKaAr-IP12,SaTaCoAr-IP13} and alike~\cite{ChSc-SIAM17,LiYaZh-IP20} are available. Existing results, either analytical or computational, on nonlinear transport models such as~\eqref{EQ:ERT TP} are very limited; see~\cite{ChGaRe-JCP11,LiSu-arXiv19,KlLaLi-arXiv20} for some related results.

\section{The forward problem}
\label{SEC:Forward}

We start by establishing the well-posedness theory for the semilinear transport equation~\eqref{EQ:ERT TP}. 
To setup the analysis, we denote by $L^p(X)$ (resp. $L^p(\Omega)$) the space of real-valued functions whose $p$-th power are Lebesgue integrable on $X$ (resp. $\Omega$), and $\cH^p(X)$ the space of $L^p(X)$ functions whose derivative in direction $\bv$ is in $L^p(X)$, i.e. $\cH^p(X)=\{f(\bx,\bv): f\in L^p(X)\ \mbox{and}\ \bv\cdot\nabla f\in L^p(X)\}$. We denote by $L^p(\Gamma_-)$ the space of functions that are traces of $\cH^p(X)$ functions on $\Gamma_-$ under the norm $\|f\|_{L^p(\Gamma_-)}=(\int_{\partial\Omega}\int_{\bbS_{\bx-}^{d-1}}|\bn(\bx)\cdot\bv||f|^p d\bv d\gamma)^{1/p}$, $d\gamma$ being the surface measure on $\partial\Omega$ and $\bbS_{\bx-}^{d-1}=\{\bv: \bv\in\bbS^{d-1}\ \mbox{s.t.}\ -\bnu(\bx)\cdot\bv>0\}$. It is well-known~\cite{Agoshkov-Book98, DaLi-Book93-6} that both $\cH^p(X)$ and $L^p(\Gamma_-)$ are well-defined. 

For a given set $Y$, we introduce the space of bounded functions on $Y$:
\[
		\cF_{\underline f}^{\overline f}(Y):=\{ f\in L^\infty(Y) \mid \exists \underline f, \overline f \ \mbox{s.t.}\ 0<\underline f \le f\le \overline f<+\infty\ a.e.\}\, .
\]

\emph{Unless stated otherwise, we make the following assumptions on the domain $\Omega$, the scattering coefficient and the scattering phase function throughout the paper}:\\[1ex]
$(\cA)$. (i) the domain $\Omega$ is bounded, convex and smooth; (ii) the scattering coefficient $\sigma_s(\bx)\in \cF_{\underline \sigma_s}^{\overline \sigma_s}(\Omega)$ for some constants $\underline\sigma_s$ and $\overline\sigma_s$; and (iii) the scattering phase function $\Theta(\bv, \bv')\in \cF_{\underline \theta}^{\overline\theta}(\bbS^{d-1}\times \bbS^{d-1})$ for some $\underline\theta$ and $\overline\theta$.\\[1ex]
With the convention that the surface measure $d\bv$ on $\bbS^{d-1}$ is normalized, we observe that this assumption means that $\underline\theta\le 1$ while $\overline\theta\ge 1$.




For any point $(\bx, \bv)\in X$, we use $\tau_{-}(\bx, \bv)$ to denote the distance a particle starting from $\bx$ and traveling in the direction $-\bv$ has to travel to reach the boundary of the domain. That is:
\begin{equation}\label{EQ:Tau}
	\tau_{-}(\bx, \bv) := \sup\{s\in\bbR \mid \bx - s\bv\in\Omega\}\,.
\end{equation}
Note that due to the assumption that $\Omega$ is convex, $\tau_{-}(\bx, \bv)$ is uniquely determined for any $(\bx, \bv)\in X$.

The following simple result on the linear transport equation~\eqref{EQ:ERT} turns out to be useful.
\begin{lemma}\label{LEM:Positivity Lin ERT}
	 Let $g\in L^{\infty}(\Gamma_{-})$ be given such that $\underline g:=\inf_{\Gamma_-}g>0$ and $u$ be the unique solution to~\eqref{EQ:ERT} with $(\Sigma_a, \sigma_s, \Theta)$. Assume that $\Sigma_a\in\cF_{\underline\Sigma_a}^{\overline\Sigma_a}(\Omega)$ for some $\underline\Sigma_a$ and $\overline\Sigma_a$. Then, under the assumptions in $(\cA)$, there exists some constant $\fc >0$ such that $u \ge \fc>0$.
\end{lemma}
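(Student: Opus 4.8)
The plan is to pass to the mild (integral-along-characteristics) formulation of \eqref{EQ:ERT}, exploit the nonnegativity of $\sigma_s$ and of the scattering kernel $\Theta$ to discard the scattering contribution, and bound the remaining ballistic part from below by an explicit positive constant.

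First I would integrate \eqref{EQ:ERT} along the backward characteristic $t\mapsto\bx-t\bv$, $t\in[0,\tau_-(\bx,\bv)]$, using the integrating factor
\[
E(\bx,\bv;t):=\exp\Big(-\int_0^t(\Sigma_a+\sigma_s)(\bx-s\bv)\,ds\Big).
\]
Because $\Omega$ is convex, the foot $\by:=\bx-\tau_-(\bx,\bv)\bv$ lies on $\partial\Omega$ and $\bv$ points into $\Omega$ there, so $(\by,\bv)\in\Gamma_-$ and the trace of $u$ at $(\by,\bv)$ is $g(\by,\bv)$. This gives, for a.e.\ $(\bx,\bv)\in X$, the identity $u=u_b+\cK u$ with $u_b(\bx,\bv):=E(\bx,\bv;\tau_-(\bx,\bv))\,g(\by,\bv)$ and $(\cK f)(\bx,\bv):=\int_0^{\tau_-(\bx,\bv)}E(\bx,\bv;t)\,\sigma_s(\bx-t\bv)\,(Kf)(\bx-t\bv,\bv)\,dt$; the equivalence of this with \eqref{EQ:ERT} at the regularity of $\cH^p(X)$ is classical (see~\cite{Agoshkov-Book98,DaLi-Book93-6}).

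Next I would record two elementary facts. (i) $\cK$ is linear, positivity-preserving (since $E>0$, $\sigma_s\ge\underline\sigma_s>0$, and $\Theta\ge\underline\theta>0$, so $K$ preserves nonnegativity), and a strict contraction on $L^\infty(X)$: using $\partial_t E=-(\Sigma_a+\sigma_s)(\bx-t\bv)E$, $\sigma_s\le\Sigma_a+\sigma_s$, $\tau_-(\bx,\bv)\le\operatorname{diam}(\Omega)$ and $\|Kf\|_{L^\infty}\le\|f\|_{L^\infty}$, one gets $\|\cK f\|_{L^\infty}\le(1-e^{-(\overline\Sigma_a+\overline\sigma_s)\operatorname{diam}(\Omega)})\|f\|_{L^\infty}$. (ii) Since $g\ge\underline g>0$ a.e.\ and $\Sigma_a+\sigma_s\le\overline\Sigma_a+\overline\sigma_s$, the ballistic part satisfies $u_b\ge\underline g\,e^{-(\overline\Sigma_a+\overline\sigma_s)\operatorname{diam}(\Omega)}=:\fc>0$. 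Then, since $\|\cK\|_{L^\infty(X)\to L^\infty(X)}<1$, by the uniqueness assumption $u=\sum_{k\ge0}\cK^k u_b$ with $L^\infty$-convergence; every term is nonnegative, so $u\ge u_b\ge\fc>0$, which is the claim.

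The only nonroutine point is the bookkeeping behind the first step: checking that \eqref{EQ:ERT} with the $L^\infty(\Gamma_-)$ datum $g$ is indeed equivalent to $u=u_b+\cK u$, and that for a.e.\ $(\bx,\bv)$ the backward characteristic foot $\by$ is a genuine point of $\Gamma_-$ at which $g\ge\underline g$. For bounded convex $\Omega$ these are standard facts that may be cited; everything afterward is the Neumann-series estimate above, which in particular also reproves the well-posedness used implicitly in the statement.
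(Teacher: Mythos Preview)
Your proof is correct and follows essentially the same approach as the paper: both identify the ballistic part $u_b$ (the paper calls it $\wt u$), bound it below by $\fc=\underline g\,e^{-(\overline\Sigma_a+\overline\sigma_s)\operatorname{diam}(\Omega)}$, and then show $u\ge u_b$. The only cosmetic difference is in justifying $u\ge u_b$: the paper writes the equation for $\phi:=u-\wt u$, observes its source $\sigma_s K u$ is nonnegative (having first cited $u\ge0$ from standard theory), and invokes the positivity principle; you instead establish the Neumann series $u=\sum_{k\ge0}\cK^k u_b$ via the contraction estimate and note every term is nonnegative, which is a slightly more self-contained route that also recovers well-posedness as a byproduct.
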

\begin{proof}
	We first observe that with all the assumptions made, we have that $u\ge 0$ from the standard transport theory~\cite{DaLi-Book93-6}. Let $\wt u$ be the solution to
\begin{equation*}
\begin{array}{rcll}
\bv \cdot \nabla \wt u(\bx, \bv) + (\Sigma_a + \sigma_s) \wt u(\bx, \bv) &=& 0, &\mbox{in}\ X\, \\
\wt u(\bx, \bv) &= & g(\bx, \bv), &\mbox{on}\ \Gamma_{-}\,.
\end{array}
\end{equation*}
Then $\wt u$ can be found analytically as
\[
	\wt u(\bx, \bv)=g(\bx-\tau_{-}(\bx, \bv)\bv, \bv)\exp(-\int_0^{\tau_{-}(\bx, \bv)}  (\Sigma_a + \sigma_s) (\bx-t \bv) dt),
\]
where $\tau_{-}(\bx, \bv)$ has been defined in~\eqref{EQ:Tau}. This expression implies, together with the facts that $\Omega$ is bounded and $g\ge \underline{g}$, that $\wt u(\bx, \bv) \ge \fc'$ for some $\fc'>0$. 

We then check that $\phi:=u-\wt u$ solves the linear transport equation
\begin{equation*}
\begin{array}{rcll}
\bv \cdot \nabla \phi + (\Sigma_a+\sigma_s) \phi  &=& \sigma_s(\bx) K u, &\mbox{in}\ X\, \\
\phi(\bx, \bv) &= & 0, &\mbox{on}\ \Gamma_{-}\,.
\end{array}
\end{equation*}
Using the fact that $u\ge 0$ (and therefore $\sigma_s K u\ge 0$), we conclude that this transport equation has a solution $\phi\ge 0$. Therefore, $u\ge \wt u$. The final result then follows.
\end{proof}

We are now ready study solutoin properties of the semilinear transport equation~\eqref{EQ:ERT TP}. For physical reasons, we are only interested in \emph{non-negative} solutions. We consider two types of incoming boundary sources.

\subsection{General bounded sources}

For a given set of functions $(\sigma_a, \sigma_b, \sigma_s, \Theta)$, let $g(\bx, \bv)\in L^{\infty}(\Gamma_{-})$ be the boundary source for~\eqref{EQ:ERT TP}. We denote by $\underline{g}:=\inf_{(\bx, \bv)\in\Gamma_-} g(\bx, \bv)$ and $\overline{g}:=\sup_{(\bx, \bv)\in\Gamma_-} g(\bx, \bv)$. We assume that
\begin{equation}\label{EQ:SOURCE}
\underline{g}>0,\quad\mbox{and},\quad 
\overline g\le \left\{
	\begin{array}{cl}
		\displaystyle\inf_\Omega \dfrac{\sigma_a}{\sigma_b},& \mbox{when}\ \sigma_s\equiv 0,\\
		\max\Big(\displaystyle\inf_\Omega \dfrac{\sigma_a}{\sigma_b},\ 2\underline{\theta}  \displaystyle\inf_{\Omega}\frac{\sigma_s(\bx)}{\sigma_b(\bx)}\Big),& \mbox{when}\ \sigma_s\neq 0.
	\end{array}\right.
\end{equation}
with $\underline{\theta}$ being the constant introduced in Assumption $(\cA)$. We can show that a non-negative solution to~\eqref{EQ:ERT TP} with such a $g$ exists and is unique. Our main strategy of proof is to analyze the fixed-point iteration: $k\ge 1$
\begin{equation}\label{EQ:ERT TP FPI}
\begin{array}{rcll}
\bv \cdot \nabla u^{k} + (\sigma_a+\sigma_s) u^{k}(\bx, \bv) + \sigma_b \aver{u^{k-1}} u^{k}(\bx, \bv) &=& \sigma_s(\bx) K u^{k}(\bx, \bv), &\mbox{in}\ X\,\\
u^k(\bx, \bv) &= & g(\bx, \bv), &\mbox{on}\ \Gamma_{-}\,
\end{array}
\end{equation}
using Kellogg's uniqueness theory~\cite{Kellogg-PAMS76} for the Schauder Fixed-Point Theorem together with the averaging lemma~\cite{GoLiPeSe-JFA88}. For the convenience of the readers, we recalled both results in the Appendix~\ref{SEC:Appendix A}.

We now prove the existence and uniqueness of non-negative solutions to~\eqref{EQ:ERT TP}.
\begin{theorem}\label{THM:WELL}
	For any $\sigma_a\in\cF_{\underline\sigma_a}^{\overline\sigma_a}$ and $\sigma_b\in\cF_{\underline\sigma_b}^{\overline\sigma_b}$, let $g\in L^{\infty}(\Gamma_{-})$ be given as in~\eqref{EQ:SOURCE}. Then, under the assumption $(\cA)$, the transport equation~\eqref{EQ:ERT TP} has a unique bounded solution $u\in L^\infty(X)$ that is non-negative: $u(\bx, \bv)\ge 0$. 
\end{theorem}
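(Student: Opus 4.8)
The plan is to recast \eqref{EQ:ERT TP} as a fixed-point problem for the scalar field $\rho:=\langle u\rangle\in L^2(\Omega)$. On the closed, bounded, convex set $\mathcal C:=\{\rho\in L^2(\Omega)\mid 0\le\rho\le\overline g\ \text{a.e.}\}$ the coefficient $\Sigma_a[\rho]:=\sigma_a+\sigma_b\rho$ ranges in a fixed admissible class $\cF_{\underline\sigma_a}^{\,\overline\sigma_a+\overline\sigma_b\overline g}(\Omega)$ independent of $\rho$, so well-posedness for the linear equation \eqref{EQ:ERT} with coefficients $(\Sigma_a[\rho],\sigma_s,\Theta)$ produces a unique $u[\rho]\in\cH^2(X)$, and I define $\Phi(\rho):=\langle u[\rho]\rangle$; the scheme \eqref{EQ:ERT TP FPI} is precisely the Picard iteration for $\Phi$ at the level of averages. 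First I would check $\Phi(\mathcal C)\subseteq\mathcal C$: $u[\rho]\ge0$ is standard transport theory, while $u[\rho]\le\overline g$ follows by applying the comparison principle to $v:=\overline g-u[\rho]$, which solves a linear transport equation with nonnegative source $\overline g\,\Sigma_a[\rho]$ and nonnegative incoming data $\overline g-g$ (using $K\mathbf 1=\mathbf 1$); averaging over $\bv$ with the normalized measure yields $0\le\Phi(\rho)\le\overline g$.

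Next I would verify the hypotheses of a $C^1$ fixed-point theorem on $\mathcal C$. \emph{Compactness}: from $\|u[\rho]\|_{L^\infty(X)}\le\overline g$ and the equation, $\bv\cdot\nabla u[\rho]$ is bounded in $L^2(X)$ uniformly over $\rho\in\mathcal C$, so — after lifting the incoming data so the remaining part carries zero trace and an $L^2$-bounded source — the averaging lemma of Appendix~\ref{SEC:Appendix A} places $\langle u[\rho]\rangle$ in a bounded subset of $H^s(\Omega)$ for some $s\in(0,\tfrac12]$, compactly embedded in $L^2(\Omega)$; hence $\Phi$ is compact. \emph{Regularity}: writing $u[\rho_1]-u[\rho_2]$ as the solution of \eqref{EQ:ERT} with source $-\sigma_b(\rho_1-\rho_2)u[\rho_2]$ and zero incoming data, the uniform $L^2$ resolvent bound for \eqref{EQ:ERT} gives $\|u[\rho_1]-u[\rho_2]\|_{L^2(X)}\lesssim\|\rho_1-\rho_2\|_{L^2(\Omega)}$, and linearizing yields the Fréchet derivative $\Phi'(\rho)\delta\rho=-\big\langle L[\rho]^{-1}\!\big(\sigma_b\,u[\rho]\,\delta\rho\big)\big\rangle$ ($L[\rho]$ the linear transport operator, inverted with zero incoming data), which depends norm-continuously on $\rho$ by the same estimates. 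Schauder's theorem then produces a fixed point $\rho^\ast$, and $u:=u[\rho^\ast]$ solves \eqref{EQ:ERT TP}; moreover $u\ge\fc>0$ by Lemma~\ref{LEM:Positivity Lin ERT}, which in particular settles nonnegativity.

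Finally, uniqueness. Since $\Phi$ is compact and $C^1$ on $\mathcal C$, Kellogg's theorem reduces uniqueness to showing that $1$ is not an eigenvalue of $\Phi'(\rho)$ for any $\rho\in\mathcal C$. Because $L[\rho]^{-1}$ is positivity-preserving and $\sigma_b u[\rho]\ge0$, the operator $-\Phi'(\rho)$ is positive, so by the standard supersolution estimate for positive operators it suffices to produce a strictly positive $\phi$ with $-\Phi'(\rho)\phi\le c\,\phi$ for some $c<1$, which forces the spectral radius of $-\Phi'(\rho)$ below $1$ and rules out the eigenvalue $1$. This is exactly the role of \eqref{EQ:SOURCE}. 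In the scattering-free case one may take $\phi\equiv\mathbf 1$ and argue directly from the explicit formula $u[\rho](\bx,\bv)=g(\bx-\tau_-\bv,\bv)\exp\!\big(-\!\int_0^{\tau_-}(\sigma_a+\sigma_b\rho)(\bx-t\bv)\,dt\big)$: using the pointwise bound $\sigma_b\overline g\le\sigma_a$ from \eqref{EQ:SOURCE}, the kernel of $-\Phi'(\rho)$ is dominated by $Ae^{-A}\le e^{-1}$ with $A:=\int_0^{\tau_-}\sigma_a(\bx-t\bv)\,dt$, so $\Phi$ is in fact an $e^{-1}$-contraction on $(\mathcal C,\|\cdot\|_{L^\infty})$ and Banach's theorem applies outright. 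When $\sigma_s\neq0$ the characteristics formula is unavailable, and instead one compares $L[\rho]^{-1}(\sigma_b u[\rho])$ against a supersolution assembled from $\mathbf 1$ and the scattering lower bound $Kw\ge\underline\theta\langle w\rangle$; this is where the second option $2\underline\theta\inf_\Omega(\sigma_s/\sigma_b)$ in \eqref{EQ:SOURCE} enters. I expect the real difficulty to be precisely this last estimate — engineering a supersolution sharp enough to absorb \emph{either} branch of the maximum in \eqref{EQ:SOURCE} while correctly handling the nonlocal scattering operator; a subsidiary nuisance is the trace bookkeeping needed to apply the averaging lemma on the bounded domain $\Omega$ rather than on $\bbR^d$.
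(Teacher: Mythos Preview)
Your overall architecture coincides with the paper's: recast \eqref{EQ:ERT TP} as a fixed point for $\rho\mapsto\langle u[\rho]\rangle$ on $\{0\le\rho\le\overline g\}\subset L^2(\Omega)$, obtain compactness from the averaging lemma, existence from Schauder, and uniqueness from Kellogg. One omission: Kellogg's theorem also requires that no fixed point lie on the boundary of $\mathcal C$, which you do not address; the paper disposes of this by noting that the linear solution operator is a strict contraction (so $\langle u[\rho]\rangle<\overline g$) together with Lemma~\ref{LEM:Positivity Lin ERT} (so $\langle u[\rho]\rangle>0$). Your $Ae^{-A}\le e^{-1}$ observation in the non-scattering case is a pleasant sharpening---it yields an explicit $e^{-1}$-contraction in $L^\infty$ and bypasses Kellogg there entirely---and is genuinely different from the paper's route.

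The substantive gap is the scattering case. Your claim that ``the characteristics formula is unavailable'' when $\sigma_s\neq 0$ is the wrong diagnosis, and it sends you toward an abstract supersolution argument that you leave unexecuted. The paper's move is precisely to \emph{keep} characteristics: work with the full transport eigenfunction $\phi(\bx,\bv)$ (whose average is the putative eigenvector), treat $R:=\sigma_s K\phi-\sigma_b\langle\phi\rangle u$ as a source, and obtain
\[
|\phi(\bx,\bv)|\le\Big(1-e^{-\int_0^{\tau_-}\sigma_t}\Big)\sup_X\frac{|R|}{\sigma_t}\le\beta\,\sup_X\frac{|R|}{\sigma_t},\qquad\beta<1.
\]
Because $R$ is itself controlled by $\sup_X|\phi|$, the estimate closes on itself. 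Under the first branch of \eqref{EQ:SOURCE} one uses $|R|\le(\sigma_s+\sigma_b u)\sup|\phi|\le\sigma_t\sup|\phi|$ directly. Under the second branch the key algebraic idea---absent from your sketch---is the splitting
\[
R=\big(\sigma_s K\phi-\underline\theta\,\sigma_s\langle\phi\rangle\big)+\big(\underline\theta\,\sigma_s-\sigma_b u\big)\langle\phi\rangle,
\]
so that $|R|\le\big((1-\underline\theta)\sigma_s+|\underline\theta\sigma_s-\sigma_b u|\big)\sup|\phi|$; the condition $\overline g\le 2\underline\theta\inf(\sigma_s/\sigma_b)$ then forces $|\underline\theta\sigma_s-\sigma_b u|\le\underline\theta\sigma_s$, whence $|R|\le\sigma_s\sup|\phi|$ and $\sup|\phi|\le(\sup\sigma_s/\sigma_t)\sup|\phi|$ with $\sup\sigma_s/\sigma_t<1$. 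Either way one lands at $\sup_X|\phi|\le c\sup_X|\phi|$ with $c<1$, hence $\phi\equiv 0$. This self-referential $L^\infty$ bound on the eigenfunction is what makes both branches of \eqref{EQ:SOURCE} emerge cleanly; your supersolution route, as stated, does not supply it.
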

\begin{proof}
    We first show the existence of non-negative solution by showing that a fixed point exist for the iteration~\eqref{EQ:ERT TP FPI}. We introduce a set of bounded functions:
\begin{equation*}
\cM = \{m\in L^2(\Omega) \mid 0\le m(\bx) \le \overline{g}\ a.e.\}\,.
\end{equation*}
It is clear that $\cM$ is convex, bounded and closed under the $L^2$ topology.
 
For any given function $m(\bx)\in \cM$, we introduce a linear transport equation:
\begin{equation}\label{EQ:MODIF ERT}
\begin{array}{rcll}
\bv \cdot \nabla u + (\sigma_a+\sigma_s) u(\bx, \bv) + \sigma_b(\bx) m(\bx) u(\bx, \bv) &=& \sigma_s(\bx) K u(\bx, \bv), &\mbox{in}\ X\, \\
u(\bx, \bv) &= & g(\bx, \bv), &\mbox{on}\ \Gamma_{-}\,.
\end{array}
\end{equation}
This is simply the semilinear transport equation~\eqref{EQ:ERT TP} with $\aver{u}$ replaced by $m(\bx)$. Under the assumptions we have made, we conclude from the standard transport theory~\cite{Agoshkov-Book98, DaLi-Book93-6} that this linear transport equation has a unique solution $u(\bx, \bv)\in L^{\infty}(X)$. Moreover, $u$ satisfies $0\le u(\bx, \bv)\le \overline{g}$. This means also that $0\le \aver{u}\le \overline{g}$.

Therefore, the operator $\cC: m\mapsto \aver{u}$, defined through the relation
\begin{equation}\label{EQ:Operator C}
	\cC(m): = \aver{u}
\end{equation}
with $u$ being the solution to~\eqref{EQ:MODIF ERT}, maps $\cM$ into a subset of it, that is, $\cC(\cM)\subseteq \cM$. Meanwhile, we can also verify that $\cC$ is a continuous operator on $\cM$. To see that, let $u$ and $\tilde u$ be the solutions of~\eqref{EQ:MODIF ERT} with $m$ and $\tilde m$ respectively. Then $w=\tilde u- u$ solves
\begin{equation*}
\begin{array}{rcll}
\bv \cdot \nabla w + (\sigma_a+\sigma_s) w + \sigma_b(\bx) m(\bx) w &=& \sigma_s(\bx) K w -(m-\tilde m)\tilde u, &\mbox{in}\ X\,\\
w(\bx, \bv) &= & 0, &\mbox{on}\ \Gamma_{-}\,.
\end{array}
\end{equation*}
With the assumptions we have, especially the fact that $m\ge 0$, this transport equation admits a unique solution that satisfies the stability bound 
\[
	\|w\|_{\cH^1(X)}:=\|u-\tilde u\|_{\cH^1(X)} \le \wt\fc\|(m-\tilde m)\tilde u\|_{L^2(X)}\le \fc\|(m-\tilde m)\|_{L^2(\Omega)}
\]
for some constants $\fc, \tilde \fc>0$. The last inequality comes from the fact that $\tilde u \in L^\infty(X)$. Using this bound, together with the averaging lemma~\cite{GoLiPeSe-JFA88}, that is, Theorem~\ref{THM:Averaging Lemma}, and the fact that $w_{|\Gamma_-}:=u_{|\Gamma_-}-\tilde u_{|\Gamma_-}=0$, we conclude that $\aver{w}:=\aver{u}-\aver{\tilde u} \in W^{1/2,2}(\Omega)$ and
\[
	\|\cC(m)-\cC(\tilde m)\|_{W^{1/2,2}(\Omega)}:=\|\aver{u}-\aver{\tilde u}\|_{W^{1/2,2}(\Omega)}\le \|u-\tilde u\|_{\cH^1(X)} \le \fc\|(m-\tilde m)\|_{L^2(\Omega)}.
\]
This bound, combined with the Kondrachov embedding theorem~\cite{AdFo-Book03}, leads to the fact that the operator $\cM$ is a continuous compact operator from $\cM$ to itself. The Schauder Fixed-Point Theorem~\cite{GiTr-Book00,Zeidler-Book95} then implies that exists a fixed point $m^{\ast}\in \cM$ that $\cC (m^{\ast}) = m^{\ast}$. Therefore, there exists a bounded non-negative solution to the transport equation~\eqref{EQ:ERT TP}. 

We now use Kellogg's theory~\cite{Kellogg-PAMS76}, that is, Theorem~\ref{THM:UNIQ}, to show uniqueness of the above fixed point. To verify that the fixed point cannot live on $\partial\cM$, we observe that since $\sigma_a>0$, the solution operator of~\eqref{EQ:MODIF ERT} is a strict contraction even when $m\equiv 0$. Therefore $\aver{u}<\overline{g}$. Meanwhile, Lemma~\ref{LEM:Positivity Lin ERT} implies that $\aver{u}>0$. Therefore, $\cC$ maps $\cM$ into its interior $\cM^\circ$. This shows that the fixed point of $\cC$ cannot live on $\partial\cM$.

The remaining task is to show that the Frech\'{e}t derivative of $\cC$ does not have $1$ as its eigenvalue in $\cM$. Let $u$ be the solution to~\eqref{EQ:MODIF ERT} with function $m(\bx)$, $\delta m(\bx)$ a perturbation of $m$ such that $m+\delta m\in\cM$, and $\phi$ the solution to
\begin{equation}\label{EQ: DERIV}
\begin{array}{rcll}
\bv\cdot \nabla \phi + (\sigma_a+\sigma_s) \phi(\bx, \bv) + \sigma_b m \phi(\bx,\bv) &=& \sigma_s K\phi - \sigma_b \delta m(\bx) u(\bx, \bv),
 &\mbox{in}\ X\, \\
\phi(\bx, \bv) &= & 0, &\mbox{on}\ \Gamma_{-}\,.
\end{array}
\end{equation}
Then it is straightforward to verify that the Frech\'{e}t derivative of $\cC$ at $m$ in the direction $\delta m$ is given as $\cC'[m](\delta m) = \aver{\phi}$. Assume now that $\cC'[m]$ indeed has $1$  as its eigenvalue and let $\aver{\phi}\not\equiv 0$ be the corresponding eigenfunction, i.e., $\cC'[m](\aver{\phi}) = \aver{\phi}$. Then the transport equation~\eqref{EQ: DERIV} is equivalent to
\begin{equation*}
\begin{array}{rcll}
\bv\cdot \nabla \phi + (\sigma_a+\sigma_s) \phi(\bx, \bv) + \sigma_b m \phi (\bx, \bv)&=& \sigma_s K\phi - \sigma_b \aver{\phi} u(\bx, \bv),
&\mbox{in}\ X\,, \\
\phi(\bx, \bv) &= & 0, &\mbox{on}\ \ \Gamma_{-}\,.
\end{array}
\end{equation*}
Let $\sigma_t(\bx):= \sigma_a(\bx) + \sigma_s(\bx)+ \sigma_b(\bx) m(\bx) $ and $R(\bx, \bv):= \sigma_s K \phi - \sigma_b\aver{\phi}u(\bx, \bv)$. By the standard method of characteristics, it is straightforward to check that $\phi$ satisfies
\begin{equation}\label{EQ:Contraction}
\begin{aligned}
|\phi(\bx, \bv)| &= \left|\int_{0}^{\tau_{-}(\bx, \bv)}  \exp\left[-\int_{0}^\ell \sigma_t(\bx - s \bv) ds \right] R(\bx - \ell\bv, \bv) d\ell \right|\\
&= \left| \int_{0}^{\tau_{-}(\bx, \bv)} \sigma_t(\bx-\ell\bv) \exp\left[-\int_{0}^\ell \sigma_t(\bx - s \bv) ds \right] \frac{R(\bx - \ell\bv, \bv)}{\sigma_t(\bx - \ell\bv)} d\ell\right| \\
&\le \int_{0}^{\tau_{-}(\bx, \bv)} \sigma_t(\bx-\ell\bv) \exp\left[-\int_{0}^\ell \sigma_t(\bx - s \bv) ds \right] \sup_{\ell\in(0,\tau_{-}(\bx,\bv))}\left|\frac{R(\bx - \ell\bv, \bv)}{\sigma_t(\bx - \ell\bv)}\right| d\ell\\
&\le \left(1 - \exp\left[-\int_{0}^{\tau_{-}(\bx, \bv)}\sigma_t(\bx - s\bv) ds \right]\right) \sup_{(\by, \bv)\in X}\frac{\left|R(\by, \bv)\right|}{\sigma_t(\by)}\\
& \le \beta \sup_{(\by, \bv)\in X}\frac{\left|R(\by, \bv)\right|}{\sigma_t(\by)},\qquad \mbox{for some}\ \beta<1 \,.
\end{aligned}
\end{equation}

When $\sigma_s\equiv 0$, we have
\begin{equation*}
|R(\bx, \bv)| = \sigma_b u(\bx,\bv)|\aver{\phi}|\le \sigma_b\, u(\bx,\bv)\, \sup_X |\phi|.
\end{equation*}
This, together with~\eqref{EQ:Contraction}, gives that
\begin{equation}\label{EQ:Temp1}
	|\phi(\bx, \bv)|\le \beta \sup_{X}\frac{\sigma_b u(\bx, \bv)}{\sigma_t}\, \sup_X |\phi|\le  \beta\overline g\,  \sup_{\Omega}\frac{\sigma_b}{\sigma_t}\, \sup_X |\phi|.
\end{equation}
When $\overline{g}\le \inf_\Omega\dfrac{\sigma_a}{\sigma_b}$, we have that $|\phi(\bx, \bv)| \le \beta \sup_X |\phi|$, $\forall\, (\bx, \bv)\in X$. Therefore $\phi \equiv 0$.

When $\sigma_s\not\equiv 0$ satisfies the assumption $(\cA)$, we have
\begin{equation*}
|R(\bx, \bv)| \le |\sigma_s K \phi|+|\sigma_b u(\bx, \bv) \aver{\phi}| \le \Big(\sigma_s +\sigma_b u(\bx, \bv)\Big)\sup_X |\phi|.
\end{equation*}
This, together with~\eqref{EQ:Contraction}, gives that
\begin{equation}\label{EQ:Temp2}
	|\phi(\bx, \bv)|\le \sup_{X}\frac{\sigma_s +\sigma_b u(\bx, \bv)}{\sigma_t}\, \sup_X |\phi| \le \sup_{\Omega}\frac{\sigma_s +\sigma_b \overline g}{\sigma_t}\, \sup_X|\phi|.
\end{equation}
Therefore, when $\overline{g}\le \inf_\Omega\frac{\sigma_a}{\sigma_b}$, we have that $|\phi(\bx, \bv)| \le \beta \sup_X |\phi|$, $\forall\, (\bx, \bv)\in X$, for some $\beta<1$. Therefore $\phi \equiv 0$.

Meanwhile, we can also have
\begin{multline*}
|R(\by, \bv)| =|\sigma_s K\phi-\underline\theta \sigma_s \aver{\phi} + \underline\theta \sigma_s \aver{\phi} -\sigma_b u \aver{\phi}|\le 
|\sigma_s K \phi-\underline\theta \sigma_s \aver{\phi}| + |\underline\theta \sigma_s \aver{\phi} -\sigma_b u \aver{\phi}|\\
\le \left((1-\underline{\theta})\sigma_s + |\sigma_s(\by) \underline{\theta} - \sigma_b(\by) u(\by, \bv)|\right)  \sup |\phi|.
\end{multline*}
This, together with~\eqref{EQ:Contraction}, gives that
\begin{equation}\label{EQ:Temp3}
	|\phi(\bx, \bv)|\le \sup_{(\by, \bv)\in X}\frac{\left( (1-\underline{\theta})\sigma_s +|\sigma_s(\by) \underline{\theta} - \sigma_b(\by) u(\by, \bv)| \right)}{\sigma_t}\, \sup|\phi|.
\end{equation}
Therefore, when $\overline g\le 2\underline{\theta}  \displaystyle\inf_{\Omega}\frac{\sigma_s(\bx)}{\sigma_b(\bx)}$, we have that $|\phi(\bx, \bv)| \le \sup_\Omega\frac{\sigma_s}{\sigma_t} \sup_X |\phi|$, $\forall\, (\bx, \bv)\in X$. Therefore $\phi \equiv 0$.

We have thus shown that $1$ is not an eigenvalue of $\cC'[m]$ in $\cM$. Therefore, the fixed-point of $\cC$ in $\cM$ is unique. This concludes the proof.
\end{proof}

\begin{remark}\rm
The above theory requires the smallness of the boundary source $g(\bx, \bv)$ (as a sufficient condition) for the solution to the transport equation~\eqref{EQ:ERT TP} to be unique. This type of smallness assumptions is common for nonlinear problems. Note that in the diffusive limit when $\sigma_s\to +\infty$, the ratio $\sigma_s/\sigma_b \to +\infty$. This means that the smallness requirement is not necessary anymore in the diffusive regime. This is exactly what happened in~\cite{ReZh-SIAM18} where it is shown that the diffusion approximation of~\eqref{EQ:ERT TP} has a unique non-negative solution for any given bounded non-negative boundary source.
\end{remark}

The following fact about non-negative solutions to the transport equation~\eqref{EQ:ERT TP} can be proved using the same ideas of Lemma~\ref{LEM:Positivity Lin ERT}.
\begin{corollary}\label{COR:Positivity}
		For any $\sigma_a\in\cF_{\underline\sigma_a}^{\overline\sigma_a}$ and $\sigma_b\in\cF_{\underline\sigma_b}^{\overline\sigma_b}$, let $g\in L^{\infty}(\Gamma_{-})$ be given as in~\eqref{EQ:SOURCE} and $u$ be the corresponding unique non-negative solution to~\eqref{EQ:ERT TP}. Then, under the assumption $(\cA)$, $u\ge \fc$ for some $\fc>0$.
\end{corollary}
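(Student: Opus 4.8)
The plan is to reduce the statement to Lemma~\ref{LEM:Positivity Lin ERT} by viewing the semilinear equation~\eqref{EQ:ERT TP} as a linear transport equation with a bounded, strictly positive effective absorption coefficient. Concretely, let $u$ be the unique non-negative solution to~\eqref{EQ:ERT TP} furnished by Theorem~\ref{THM:WELL}, and set $\Sigma_a(\bx) := \sigma_a(\bx) + \sigma_b(\bx)\aver{u}(\bx)$. By construction, $u$ solves the linear transport equation~\eqref{EQ:ERT} with coefficients $(\Sigma_a, \sigma_s, \Theta)$ and boundary data $g$, and by uniqueness for the linear problem it is \emph{the} solution.

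Next I would verify that $\Sigma_a\in\cF_{\underline\Sigma_a}^{\overline\Sigma_a}(\Omega)$ for suitable constants. From Theorem~\ref{THM:WELL} we have the a priori bound $0\le u\le \overline g$, hence $0\le\aver{u}(\bx)\le\overline g$ a.e. Combined with $\sigma_a\ge\underline\sigma_a>0$ and $\sigma_b\le\overline\sigma_b<+\infty$, this gives $\underline\sigma_a\le\Sigma_a(\bx)\le\overline\sigma_a+\overline\sigma_b\,\overline g<+\infty$ a.e., so one may take $\underline\Sigma_a=\underline\sigma_a$ and $\overline\Sigma_a=\overline\sigma_a+\overline\sigma_b\,\overline g$. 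Since $g\in L^\infty(\Gamma_-)$ with $\underline g=\inf_{\Gamma_-}g>0$ by~\eqref{EQ:SOURCE}, and Assumption $(\cA)$ holds, all hypotheses of Lemma~\ref{LEM:Positivity Lin ERT} are met; applying that lemma produces a constant $\fc>0$ with $u\ge\fc$, which is the claim.

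There is no genuine obstacle here; the only point requiring a moment of care is that the effective absorption $\sigma_a+\sigma_b\aver{u}$ is bounded above and below by positive constants, which rests on the a priori bound $0\le\aver{u}\le\overline g$ from Theorem~\ref{THM:WELL} and hence ultimately on the smallness condition~\eqref{EQ:SOURCE}. Alternatively, one can repeat the argument of Lemma~\ref{LEM:Positivity Lin ERT} verbatim: compare $u$ with the solution $\wt u$ of the purely absorbing equation $\bv\cdot\nabla\wt u+(\sigma_a+\sigma_s+\sigma_b\aver{u})\wt u=0$ with $\wt u=g$ on $\Gamma_-$, which admits the explicit representation $\wt u(\bx,\bv)=g(\bx-\tau_-(\bx,\bv)\bv,\bv)\exp(-\int_0^{\tau_-(\bx,\bv)}(\sigma_a+\sigma_s+\sigma_b\aver{u})(\bx-t\bv)\,dt)$ and is therefore bounded below by a positive constant because $\Omega$ is bounded, the coefficients are bounded, and $g\ge\underline g>0$; then $\phi:=u-\wt u$ solves a transport problem with zero incoming data and non-negative source $\sigma_s Ku\ge 0$, so $\phi\ge 0$ and $u\ge\wt u\ge\fc$.
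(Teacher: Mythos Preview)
Your proof is correct and in fact more streamlined than the paper's. Your main argument---freeze $\aver{u}$, observe that $\Sigma_a:=\sigma_a+\sigma_b\aver{u}\in\cF_{\underline\sigma_a}^{\overline\sigma_a+\overline\sigma_b\overline g}(\Omega)$ thanks to the a~priori bound $0\le\aver{u}\le\overline g$ from Theorem~\ref{THM:WELL}, and then invoke Lemma~\ref{LEM:Positivity Lin ERT} directly---is a clean one-line reduction. Your alternative (compare $u$ with the explicit solution $\wt u$ of the purely absorbing problem with absorption $\sigma_a+\sigma_s+\sigma_b\aver{u}$) is likewise valid and is essentially Lemma~\ref{LEM:Positivity Lin ERT}'s proof unrolled.

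The paper proceeds differently: it inserts an intermediate comparison function $w$ solving $\bv\cdot\nabla w+(\sigma_a+\sigma_s+\sigma_b\overline g)w=\sigma_s Ku$ with $w=g$ on $\Gamma_-$, shows $u\ge w$ (because $\sigma_b(\overline g-\aver{u})w\ge 0$), and then compares $w$ with the explicit solution $\wt w$ of the source-free problem with the \emph{constant} extra absorption $\sigma_b\overline g$, obtaining $w\ge\wt w\ge\underline g\,e^{-(\overline\sigma_a+\overline\sigma_s+\overline g\,\overline\sigma_b)\mathrm{diam}(\Omega)}$. The two-step chain $u\ge w\ge\wt w$ buys an explicit constant expressed purely in terms of the a~priori data (no dependence on the unknown $\aver{u}$), at the cost of one extra comparison. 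Your route is shorter; the paper's yields a slightly more transparent constant. Both are sound.
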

\begin{proof}
	This result can be seen from two comparisons between solutions. Let $w$ be the solution to the linear transport equation
\begin{equation}\label{EQ:W2}
\begin{array}{rcll}
\bv \cdot \nabla w(\bx, \bv) + (\sigma_a+\sigma_s) w(\bx, \bv) + \sigma_b \overline{g} w(\bx, \bv) &=& \sigma_s K u, &\mbox{in}\ \ X\, \\
w(\bx, \bv) &= & g(\bx, \bv), &\mbox{on}\ \ \Gamma_{-}\,.
\end{array}
\end{equation}
Using the fact that $u\ge 0$, we conclude that $\sigma_s K u\ge 0$, and therefore $w\ge 0$. Let $\phi :=u-w$. Then $\phi$ solves
\begin{equation}\label{EQ:Comparison1}
\begin{array}{rcll}
\bv \cdot \nabla \phi + (\sigma_a+\sigma_s)\phi + \sigma_b \aver{u} \phi &=& \sigma_b (\overline{g}-\aver{u}) w, &\mbox{in}\ \ X\, \\
\phi(\bx, \bv) &= & 0, &\mbox{on}\ \ \Gamma_{-}\,.
\end{array}
\end{equation}
The right-hand-side of the equation is clearly non-negative (since $\overline{g}\ge \aver{u}$ and $w\ge 0$). Therefore $\phi\ge 0$. This implies that $u\ge w$.

Next, let $\wt w$ be the solution to~\eqref{EQ:W2} with the right-hand-side removed, that is, $\wt w$ solves
\[
\begin{array}{rcll}
\bv \cdot \nabla \wt w(\bx, \bv) + (\sigma_a+\sigma_s) \wt w(\bx, \bv) + \sigma_b \overline{g} \wt w(\bx, \bv) &=& 0, &\mbox{in}\ \ X\, \\
w(\bx, \bv) &= & g(\bx, \bv), &\mbox{on}\ \ \Gamma_{-}\,
\end{array}
\]
Then, $\wt w$ can be written as
\[
	\wt w(\bx, \bv)=g(\bx-\tau_{-}(\bx, \bv)\bv, \bv)\exp\Big(-\int_0^{\tau_{-}(\bx, \bv)}  (\sigma_a + \sigma_s + \sigma_b \overline g) (\bx-t \bv) dt\Big).
\]

We therefore have that $\wt w\ge \fc:=\underline g e^{-(\overline\sigma_a+\overline\sigma_s+\overline{g}\overline{\sigma_b}){\rm diam}(\Omega)}>0$.
Let $\wt \phi := w-\wt w$, then $\wt \phi$ solves
\begin{equation*}
\begin{array}{rcll}
\bv \cdot \nabla \wt \phi + (\sigma_a+\sigma_s) \wt \phi + \sigma_b \overline{g} \wt\phi &=& \sigma_s K u, &\mbox{in}\ \ X\, \\
\wt \phi(\bx, \bv) &= & 0, &\mbox{on}\ \ \Gamma_{-}\,.
\end{array}
\end{equation*}
Non-negativity of $\sigma_a K u$ then implies that $\wt \phi\ge 0$. This gives that $w\ge \wt w$. We are now able to conclude that $u\ge w\ge \wt w\ge \fc>0$.
\end{proof}

\subsection{Collimated sources}

We now consider the transport equation~\eqref{EQ:ERT TP} with collimated illumination sources of the form:
\begin{equation}\label{EQ:Collimated Source}
g(\bx, \bv) = \mathfrak{g}(\bx) \delta(\bv - \bv'),\quad \bv'\in\bbS^{d-1}_{\bx-},
\end{equation}
where $\mathfrak g(\bx)\ge 0$ on $\partial\Omega$. This is a type of illumination strategies that is practically important.

By analyzing again the fixed-point iteration~\eqref{EQ:ERT TP FPI}, we can establish the following existence and uniqueness of non-negative solutions to~\eqref{EQ:ERT TP} with this new boundary source.
\begin{theorem}
For any $\sigma_a\in\cF_{\underline\sigma_a}^{\overline\sigma_a}$, $\sigma_b\in\cF_{\underline\sigma_b}^{\overline\sigma_b}$, and $(\sigma_s, \Theta)$ satisfying the assumptions in $(\cA)$, let $\mu := \displaystyle\sup_{\bx\in\Omega}\frac{\sigma_s}{\sigma_{a}+\sigma_s}$ and $\kappa := \displaystyle\sup_{\bx\in\Omega}\frac{\sigma_b}{\sigma_a + \sigma_s}$. Assume that $\overline{\mathfrak{g}}:= \displaystyle\sup_{\bx\in\partial\Omega} \mathfrak{g}(\bx)$, $\mu$ and $\kappa$ satisfy the condition
\[
	\left(1 + \left[\mu^2\overline{\theta}/(1-\mu) + \mu\right] +   [\mu \overline{\theta}/(1-\mu)^2]  \right) \kappa \overline{\mathfrak{g}} < 1.
\]
Then the transport equation~\eqref{EQ:ERT TP} with boundary source ~\eqref{EQ:Collimated Source} has a unique solution $u$ such that $0\le \aver{u}\le \overline{\mathfrak g}$.
\end{theorem}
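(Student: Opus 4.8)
The plan is to analyze the fixed-point iteration~\eqref{EQ:ERT TP FPI} once more, but because the collimated inflow~\eqref{EQ:Collimated Source} forces every iterate $u^k$ to carry a singular ballistic component proportional to $\delta(\bv-\bv')$, I would replace the Schauder/compactness route of Theorem~\ref{THM:WELL} by a direct contraction argument for the scalar fluxes $m^k:=\aver{u^k}$ in $L^\infty(\Omega)$. The key observation is that, although $u^k$ is a measure in $\bv$, both $Ku^k$ and the scattered part of $u^k$ are bounded functions, so all the nonlinear estimates can be carried out in $L^\infty$. Thus for $m\ge 0$ let $\cC:m\mapsto\aver{u}$ with $u$ the solution of~\eqref{EQ:MODIF ERT} for the source~\eqref{EQ:Collimated Source}; I would show $\cC$ is a contraction on the nonnegative cone of $L^\infty(\Omega)$ whose Lipschitz constant is \emph{exactly} the left-hand side of the stated hypothesis, so that the hypothesis reads ``contraction constant $<1$'', and Banach's theorem then produces the unique solution with $0\le\aver{u}\le\overline{\mathfrak g}$.

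\textbf{Split of a single linear solve.} Fix $m\ge 0$ and set $\sigma_t:=\sigma_a+\sigma_s+\sigma_b m\ge\sigma_a+\sigma_s$. Write $u=u_b+u_r$ with $\bv\cdot\nabla u_b+\sigma_t u_b=0$, $u_b|_{\Gamma_-}=g$; exactly as in Lemma~\ref{LEM:Positivity Lin ERT}, the method of characteristics gives $u_b(\bx,\bv)=\mathfrak g(\bx-\tau_-(\bx,\bv)\bv)\,e^{-\int_0^{\tau_-(\bx,\bv)}\sigma_t(\bx-s\bv)\,ds}\,\delta(\bv-\bv')$, so $0\le\aver{u_b}\le\overline{\mathfrak g}$ and, crucially, $Ku_b=\Theta(\cdot,\bv')\aver{u_b}$ is a bounded function with $|Ku_b|\le\overline\theta\,\aver{u_b}$. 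The remainder satisfies $u_r=(I-T\sigma_s K)^{-1}T\sigma_s K u_b$, where $T$ is the zero-inflow inverse transport operator with attenuation $\sigma_t$; the elementary characteristics bound used in~\eqref{EQ:Contraction}, together with $\sigma_s/\sigma_t\le\mu<1$ (here $\mu<1$ since $\sigma_a\ge\underline\sigma_a>0$ and $\sigma_s\le\overline\sigma_s<\infty$), gives $\|T\sigma_s K\|_{L^\infty(X)\to L^\infty(X)}\le\mu$, so the Neumann series converges and $\|u_r\|_{L^\infty(X)}\le\mu\overline\theta\overline{\mathfrak g}/(1-\mu)$. In particular $\aver{u}\ge 0$, and since $\sigma_b m$ contributes only additional attenuation these bounds are uniform along the iteration, so $\cC$ stays inside a fixed bounded subset of the nonnegative cone.

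\textbf{The contraction estimate.} Let $u^k,u^{k+1}$ solve~\eqref{EQ:MODIF ERT} with $m^{k-1}$ and $m^k$, and put $w:=u^{k+1}-u^k$, which solves the linear transport equation with attenuation $\sigma_t^{k+1}$, zero inflow, and right-hand side $\sigma_s Kw-\sigma_b(m^k-m^{k-1})u^k$. Using $u^k=u_b^k+u_r^k$ I split $w=w_{0,b}+w_{0,r}+w_1$. The piece $w_{0,b}$ is the no-scattering zero-inflow response to $-\sigma_b(m^k-m^{k-1})u_b^k$; it is again a multiple of $\delta(\bv-\bv')$, explicit along characteristics, and $\sigma_b/\sigma_t^{k+1}\le\kappa$ with $\aver{u_b^k}\le\overline{\mathfrak g}$ give $\|\aver{w_{0,b}}\|_{L^\infty(\Omega)}\le\kappa\overline{\mathfrak g}\,\|m^k-m^{k-1}\|_{L^\infty(\Omega)}$ --- the leading $1$ in the bracket. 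The piece $w_{0,r}$ is the zero-inflow response to $-\sigma_b(m^k-m^{k-1})u_r^k$; taking its scalar flux and splitting $u_r^k$ into its single-scattering part $T^k\sigma_s Ku_b^k$ (keeping the factor $\Theta(\bv,\bv')$ and using $\int_{\bbS^{d-1}}\Theta(\bv,\bv')\,d\bv=1$, which produces the summand $\mu$) and its higher-order part (bounded by $\mu^2\overline\theta\overline{\mathfrak g}/(1-\mu)$, producing $\mu^2\overline\theta/(1-\mu)$) gives the middle bracket term $\big[\mu^2\overline\theta/(1-\mu)+\mu\big]$ times $\kappa\overline{\mathfrak g}\,\|m^k-m^{k-1}\|_{L^\infty(\Omega)}$. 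The piece $w_1=(I-T^{k+1}\sigma_s K)^{-1}T^{k+1}\sigma_s K(w_{0,b}+w_{0,r})$ collects the multiply-scattered corrections; using $\|w_{0,r}\|_{L^\infty(X)}\le\mu\overline\theta\kappa\overline{\mathfrak g}(1-\mu)^{-1}\|m^k-m^{k-1}\|_{L^\infty(\Omega)}$ and $\|(I-T^{k+1}\sigma_s K)^{-1}\|\le(1-\mu)^{-1}$ gives $\|w_1\|_{L^\infty(X)}\le\mu\overline\theta(1-\mu)^{-2}\,\kappa\overline{\mathfrak g}\,\|m^k-m^{k-1}\|_{L^\infty(\Omega)}$ --- the last bracket term. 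Summing,
\[
\|m^{k+1}-m^k\|_{L^\infty(\Omega)}\le\Big(1+\big[\mu^2\overline\theta/(1-\mu)+\mu\big]+\big[\mu\overline\theta/(1-\mu)^2\big]\Big)\kappa\overline{\mathfrak g}\;\|m^k-m^{k-1}\|_{L^\infty(\Omega)},
\]
a strict contraction under the hypothesis; its fixed point $m^\ast$ yields the solution $u$ on solving~\eqref{EQ:MODIF ERT} once more with $m^\ast$, and uniqueness in the stated class follows by applying the same estimate to the difference of two solutions.

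I expect the main difficulty to be the singular/regular bookkeeping: verifying that each difference of iterates really decomposes as above, that the singular pieces are \emph{exactly} multiples of $\delta(\bv-\bv')$ with explicitly computable densities (so that taking $\aver{\cdot}$ is unambiguous), and --- most delicately --- that the various Neumann-series constants reassemble into \emph{precisely} the bracket in the hypothesis; the refined middle term in particular needs the single-scattering contribution isolated and the normalization $\int_{\bbS^{d-1}}\Theta(\bv,\bv')\,d\bv=1$ used in place of the crude bound $\Theta\le\overline\theta$. A secondary, more routine point is to confirm that the fixed point indeed satisfies the stated bound $0\le\aver{u}\le\overline{\mathfrak g}$ and that $\aver{u^k}$ stays in a fixed bounded set throughout the iteration.
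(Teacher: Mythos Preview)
Your proposal is correct and follows essentially the same route as the paper: a Banach contraction argument for the map $\cC:m\mapsto\aver{u}$ in $L^\infty(\Omega)$, based on the ballistic/scattered decomposition $u=u_b+u_r$ and the characteristics estimate~\eqref{EQ:Contraction}, with the normalization $\int_{\bbS^{d-1}}\Theta(\bv,\bv')\,d\bv=1$ exploited to sharpen one term. The only difference is organizational: the paper splits the difference $\phi=u_1-u_2$ by \emph{source type} (writing $\phi=\phi_b+\phi_s$ according to whether the forcing involves $u_{2,b}$ or $u_{2,s}$, then further decomposing $\phi_b=\phi_{b,b}\,\delta(\bv-\bv')+\phi_{b,s}$), whereas you split by \emph{scattering order} (your $w_{0,b},w_{0,r}$ are the zero-scattering responses and $w_1$ collects the Neumann tail); the three bracket constants are attributed to different pieces in the two schemes but sum to the identical total, so the contraction constant and the conclusion coincide.
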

\begin{proof}
For any $m(\bx)\ge 0$, let $u$ be the solution to the following linear transport equation
\begin{equation}\label{EQ:MODIF ERT 2}
\begin{array}{rcll}
\bv \cdot \nabla u + (\sigma_a+\sigma_s) u(\bx, \bv) + \sigma_b(\bx) m(\bx) u(\bx, \bv) &=& \sigma_s(\bx) K u(\bx, \bv), &\mbox{in}\ X\, \\
u(\bx, \bv) &= & \mathfrak{g}(\bx) \delta(\bv - \bv'), &\mbox{on}\ \Gamma_{-}\,.
\end{array}
\end{equation}
We then define an operator $\cC: m \mapsto \aver{u}$ as in~\eqref{EQ:Operator C}, and introduce the following set of functions
\begin{equation}
\cM = \{m\in L^{\infty}(\Omega) \mid 0\le m(\bx) \le \cC(0) \},
\end{equation}
where $\cC(0)$ is the angularly averaged solution to~\eqref{EQ:MODIF ERT 2} with $m = 0$. It follows from linear transport theory that $\cC(\cM) \subseteq\cM$.

Let $u_1$ and $u_2$ be the solutions to~\eqref{EQ:MODIF ERT 2} with $m=m_1\in\cM$ and $m=m_2\in\cM$ respectively. Following the same notation as before, we define $\sigma_{t,i} := \sigma_a + \sigma_s + m_i\sigma_b$, $i=1,2$. We can then write the solutions $u_i$ ($1\le i\le 2$) as $u_i = u_{i, b} + u_{i, s}$ with
\begin{equation}\label{EQ: U2}
\begin{aligned}
u_{i,b}(\bx, \bv) &= \mathfrak{g}(\bx - \tau_{-}(\bx, \bv)\bv)\delta(\bv - \bv')\exp\left(-\int_{0}^{\tau_{-}(\bx , \bv)}\sigma_{t,i}(\bx - s\bv) ds\right)\,, \\
u_{i,s}(\bx, \bv) &= \int_{0}^{\tau_{-}(\bx, \bv)}\exp\left(-\int_0^l \sigma_{t,i}(\bx - s\bv) ds\right)(\sigma_s K u_i)(\bx - l\bv, \bv) dl\,.
\end{aligned}
\end{equation}
Following the definition of the operator $K$ in~\eqref{EQ:Scattering Op}, we have that
\begin{equation*}
K u_{i,b}(\bx', \bv) = \Theta(\bv, \bv')\mathfrak{g}(\bx' -\tau_{-}(\bx', \bv')\bv')\exp\left(-\int_0^{\tau_{-}(\bx', \bv')}\sigma_{t,i}(\bx'-s\bv') ds\right) \le \overline{\mathfrak{g}} \overline{\theta}.
\end{equation*}
where $\bx' := \bx - l\bv$, $l\in(0, \tau_{-}(\bx, \bv))$. Meanwhile, using the same procedure as in~\eqref{EQ:Contraction}, we have that,
\begin{multline*}
\|u_{i,s}\|_{L^{\infty}(X)} \le \left(1 - \exp\left[-\int_0^{\tau_{-}(\bx, \bv)} \sigma_{t,i}(\bx - s\bv) ds \right]\right)\sup_{\bx\in\Omega}\left|\frac{\sigma_s(\bx)}{\sigma_{t, i}(\bx)}\right|(\|u_{i,s}\|_{L^{\infty}(X)} + \overline{\mathfrak{g}}\overline{\theta})\\
\le\mu\|u_{i,s}\|_{L^{\infty}(X)} +\mu \overline{\mathfrak{g}}\overline{\theta}\,.
\end{multline*}
This implies that 
\begin{equation}\label{EQ:Bound uis}
	\|u_{i,s}\|_{L^{\infty}(X)}\le \mu \overline{\mathfrak{g}}\overline{\theta}/(1-\mu). 
\end{equation}

Let us now verify that $\phi := u_1 -u_2$ solves
\begin{equation*}
\begin{array}{rcll}
    \bv\cdot \nabla \phi + (\sigma_a + \sigma_s + \sigma_b m_1)\phi &=& \sigma_s K \phi + u_2\sigma_b(m_2 - m_1), &\mbox{in}\ X\, \\
\phi(\bx, \bv) &= & 0, &\mbox{on}\ \ \Gamma_{-}\,.
\end{array}
\end{equation*}
In the same manner, we write $\phi = \phi_b + \phi_s$ with $\phi_b$ and $\phi_s$ given as
\begin{equation*}
\begin{aligned}
\phi_b(\bx, \bv)&= \int_{0}^{\tau_{-}(\bx, \bv)} \exp\left(-\int_0^{l}\sigma_{t,1}(\bx - s\bv) ds\right)\left(\sigma_s K\phi_b+  u_{2,b}\sigma_b (m_2-m_1)\right)(\bx-l\bv, \bv)dl\,,\\
\phi_s(\bx, \bv)&= \int_{0}^{\tau_{-}(\bx, \bv)} \exp\left(-\int_0^{l}\sigma_{t,1}(\bx - s\bv) ds\right)\left(\sigma_s K\phi_s+  u_{2,s}\sigma_b (m_2-m_1)\right)(\bx-l\bv, \bv)dl\,.
\end{aligned}
\end{equation*}

Use the representation of $u_{2,b}$, we obtain $\phi_b$ in the following form,
\begin{equation*}
\phi_b(\bx, \bv) = \phi_{b,b}(\bx)\delta(\bv - \bv') + \phi_{b,s}(\bx, \bv)\,,
\end{equation*}
where $|\phi_{b,b}|\le \kappa \overline{\mathfrak{g}} \|m_1-m_2\|_{L^{\infty}(\Omega)}$ with $\kappa = \sup_{\bx\in\Omega}|\frac{\sigma_b}{\sigma_a + \sigma_s}|$, and
\begin{equation}\label{EQ:PHI BS}
\begin{aligned}
\phi_{b,s}(\bx, \bv) &= \int_{0}^{\tau_{-}(\bx, \bv)} \exp\left(-\int_0^{l}\sigma_{t,1}(\bx - s\bv) ds\right)\sigma_s K \phi_{b,s}(\bx-l\bv, \bv)dl \\
&+\Theta(\bv, \bv') \int_{0}^{\tau_{-}(\bx, \bv)} \exp\left(-\int_0^{l}\sigma_{t,1}(\bx - s\bv) ds\right)\sigma_s  \phi_{b,b}(\bx-l\bv)dl\,.\\
\end{aligned}
\end{equation}
Note that the second term on the right-hand-side is bounded by $\overline{\theta} \overline{\mathfrak{g}}\kappa \mu \|m_1-m_2\|_{L^{\infty}(\Omega)}$. Therefore, we have
\begin{equation}\label{EQ:phibs}
\|\phi_{b,s}\|_{L^{\infty}(X)}\le \overline{\theta} \overline{\mathfrak{g}}\kappa \mu \|m_1-m_2\|_{L^{\infty}(\Omega)}/(1-\mu)\,.
\end{equation}
Integrating~\eqref{EQ:PHI BS} over $\bbS^{d-1}$, and then using~\eqref{EQ:phibs}, we have,
\begin{equation}\label{EQ:phibs2}
|\aver{\phi_{b,s}}| \le \mu \|\phi_{b,s}\|_{L^{\infty}(X)} + \mu\kappa\overline{\mathfrak{g}} \|m_1-m_2\|_{L^{\infty}(\Omega)} \le \left[\mu^2\overline{\theta}/(1-\mu) + \mu\right] \kappa \overline{\mathfrak{g}} \|m_1-m_2\|_{L^{\infty}(\Omega)}\,.
\end{equation}
In a similar manner, we can estimate, using~\eqref{EQ:Bound uis},
\begin{equation}\label{EQ:phis}
\|\phi_s\|_{L^{\infty}(X)} \le \kappa \|u_{2,s}\|_{L^{\infty}(X)} \|m_1-m_2\|_{L^{\infty}(\Omega)} /(1-\mu) 
\le  [\mu \overline{\theta}/(1-\mu)^2] \kappa \overline{\mathfrak{g}}  \|m_1-m_2\|_{L^{\infty}(\Omega)}\,.
\end{equation}

The bounds in~\eqref{EQ:phibs2} and ~\eqref{EQ:phis} now allow us to have
\begin{equation*}
\begin{aligned}
|\aver{\phi}|&\le |\aver{\phi_{b}}| + |\aver{\phi_{s}}|\\
&\le |\phi_{b,b}| + |\aver{\phi_{b,s}}| + \|{\phi_{s}}\|_{L^{\infty}(X)}\\
&\le \left(1 + \left[\mu^2\overline{\theta}/(1-\mu) + \mu\right] +   [\mu \overline{\theta}/(1-\mu)^2]  \right) \kappa \overline{\mathfrak{g}}\|m_1-m_2\|_{L^{\infty}(\Omega)}.
\end{aligned}
\end{equation*}
When the constant $(1 + \left[\mu^2\overline{\theta}/(1-\mu) + \mu\right] +   [\mu \overline{\theta}/(1-\mu)^2]) \kappa \overline{\mathfrak{g}} < 1$, the mapping $\cC$ is a contraction in $L^{\infty}(\Omega)$ norm, the Banach Fixed-Point Theorem~\cite{Zeidler-Book95} implies that the solution is unique in $\cM$.
\end{proof}
\begin{remark}\rm
    Unlike in the previous section, we are not able to use the Schauder Fixed-Point Theorem in this proof due to the lack of compactness of the map $\cC$. One can make additional assumptions on the smoothness of all the coefficients involved as well as the physical domain $\Omega$ to recover such compactness. We did not pursue in this direction.
\end{remark}

\section{Inversion in non-scattering media}
\label{SEC:IP No Scattering}

We start with the case of non-scattering media where $\sigma_s(\bx)\equiv 0$. In this case, the original transport model~\eqref{EQ:ERT TP} is simplified into a free transport equation which is essentially an ordinary differential equation parameterized by the angular variable $\bv$. We can obtain an explicit method for the reconstructions with either collimated sources or point sources. Similar analysis for the linear transport equation can be found in~\cite{MaRe-CMS14}. Inversion in this setup with a general bounded source will be treated in the next section as a special case.

\subsection{Inversion with collimated sources}

With collimated sources, we can integrate the transport equation along direction $\bv$ to get the following integral representation of the transport solution, when $\sigma_s\equiv 0$:
\begin{equation}\label{EQ: NON SCATTER SOL}
	u(\bx, \bv) = g(\bx- \tau_{-}(\bx, \bv)\bv, \bv) \exp\left( - \int_0^{\tau_{-}(\bx, \bv)} (\sigma_a(\bx - s\bv) + \sigma_b\aver{u}(\bx - s\bv) )ds\right)\,.
\end{equation}

Let us assume that we have data generated from two collimated sources, $g_j(\bx, \bv) = \mathfrak{g}_j(\bx)\delta(\bv - \bv')$ ($j=1,2$), focused in the same direction $\bv'\in\bbS^{d-1}$ but with different strengths $\mathfrak{g}_1\neq \mathfrak{g}_2$.  Then the corresponding data are:
\begin{equation*}
H_j(\bx) =  \sigma_a(\bx)\aver{u_j}(\bx) +\sigma_b(\bx) \aver{u_j}^2(\bx)\,, \quad j=1,\ 2
\end{equation*}
with $u_j$ satisfying
\begin{equation*}
	u_j(\bx, \bv) = \mathfrak{g}_j(\bx- \tau_{-}(\bx, \bv)\bv)\delta(\bv - \bv') \exp\left( - \int_0^{\tau_{-}(\bx, \bv)} (\sigma_a(\bx - s\bv) + \sigma_b\aver{u_j}(\bx - s\bv) )ds\right)\,.
\end{equation*}
We can integrate $u_j(\bx, \bv)$ over $\bv\in \bbS^{d-1}$ to get,
\begin{equation}\label{EQ:uj average}
\aver{u_j}(\bx) = \mathfrak{g}_j(\bx - \tau_{-}(\bx, \bv'),\bv')\exp\left( - \int_0^{\tau_{-}(\bx, \bv')} (\sigma_a(\bx - s\bv') + \sigma_b\aver{u_j}(\bx - s\bv') )ds\right)\,.
\end{equation}
For any fixed $\bx\in\Omega$, we introduce the notations $\bx' := \bx - \tau_{-}(\bx, \bv') \bv'\in\partial\Omega$, $\phi_j(s) := \aver{u_j}(\bx' + s\bv')$ , $\wt \sigma_a(s):= \sigma_a(\bx' + s\bv')$ and $\wt \sigma_b(s) := \sigma_b(\bx' +  s\bv')$. Then ~\eqref{EQ:uj average} is equivalent to:
\begin{equation}\label{EQ: TPA NONSCAT INT}
\phi_j(t) = \mathfrak{g}_j(\bx')\exp\left(-\int_0^t (\wt \sigma_a(s) + \wt \sigma_b(s)\phi_j(s) )ds\right)
\end{equation}
with $\phi_j(0) = \mathfrak{g}_j(\bx')$. Taking the logarithm of both sides of~\eqref{EQ: TPA NONSCAT INT} and then differentiate with respect to $t$, we obtain the following ODE for $\phi_j(t)$,
\begin{equation}\label{EQ: TPA NONSCAT ODE}
\phi_j'(t) = -(\wt \sigma_a(t) +\wt \sigma_b(t)\phi_j(t)) \phi_j(t),\quad t\in [0, \tau_{-}(\bx, \bv')]\,.
\end{equation}
From the definition of the internal data~\eqref{EQ:Data TP}, we notice that the right-hand-side of~\eqref{EQ: TPA NONSCAT ODE} is exactly $-H_j(\bx' + t\bv')$. We can therefore reconstruct $\phi_j(t)$ from the datum $H_j$ as
\begin{equation}\label{EQ:Rec Sol}
	\wt \phi_j(t) = \mathfrak{g}_j(\bx') -\int_{0}^t H_j(\bx' + s\bv') ds \,.
\end{equation}

Once we reconstructed $\{\wt\phi_j(t)\}_{j=1}^2$, we can reconstruct $\wt\sigma_a$ and $\wt \sigma_b$ from the data by solving the following system of equations at any $t\in[0, \tau_{-}(\bx,\bv')]$:
\begin{equation}\label{EQ: LINEAR EQS}
	\begin{array}{rcl}
	\wt \sigma_a(t) \wt\phi_1(t) + \wt\sigma_b(t) \wt\phi^2_1(t) &=& H_1(\bx' + t\bv'),\\
	\wt\sigma_a(t) \wt\phi_2(t) + \wt \sigma_b(t) \wt\phi^2_2(t) &=& H_2(\bx' + t\bv').
	\end{array}
\end{equation}
This linear system, for the unknown coefficient pair $(\sigma_a, \sigma_b)$, is uniquely invertible at $t\in [0, \tau_{-}(\bx, \bv')]$ if $\phi_1(t)\neq \phi_2(t)$. 

The following result shows that if the data $\{H_j\}_{j=1}^2$ are consistent with the model, that is, if the data are generated from the model with the true coefficients, then we can select the illumination sources $\mathfrak{g}_1$ and $\mathfrak{g}_2$ to be sufficiently different to make the system~\eqref{EQ: LINEAR EQS} invertible.
\begin{lemma}\label{LEM: MONO}
    If $\mathfrak{g}_1 > \mathfrak{g}_2 > 0$, then $\phi_1(t) > \phi_2(t)$, $\forall t\in [0, \tau_{-}(\bx, \bv')]$.
\end{lemma}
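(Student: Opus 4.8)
The plan is to prove the strict monotonicity $\phi_1(t) > \phi_2(t)$ via a Gr\"onwall-type comparison argument applied to the ODEs~\eqref{EQ: TPA NONSCAT ODE} satisfied by $\phi_1$ and $\phi_2$. First I would record the initial condition: by hypothesis $\phi_1(0) = \mathfrak{g}_1(\bx') > \mathfrak{g}_2(\bx') = \phi_2(0)$, so the strict inequality holds at $t=0$. By Corollary~\ref{COR:Positivity} (or directly from the representation~\eqref{EQ: TPA NONSCAT INT}, since $\mathfrak g_j>0$, $\wt\sigma_a,\wt\sigma_b$ are bounded, and $\Omega$ is bounded) each $\phi_j$ is strictly positive and bounded on $[0,\tau_{-}(\bx,\bv')]$, which is what makes the coefficients in the linearized comparison equation well-behaved.

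Next I would set $\psi(t) := \phi_1(t) - \phi_2(t)$ and subtract the two copies of~\eqref{EQ: TPA NONSCAT ODE}. A direct computation gives
\begin{equation*}
\psi'(t) = -\bigl(\wt\sigma_a(t) + \wt\sigma_b(t)\,(\phi_1(t)+\phi_2(t))\bigr)\,\psi(t),
\end{equation*}
which is a linear homogeneous ODE for $\psi$ with bounded (measurable) coefficient $a(t) := \wt\sigma_a(t) + \wt\sigma_b(t)(\phi_1(t)+\phi_2(t))$. Hence $\psi(t) = \psi(0)\exp\!\left(-\int_0^t a(s)\,ds\right)$, and since $\psi(0) = \mathfrak g_1(\bx')-\mathfrak g_2(\bx') > 0$ and the exponential factor is strictly positive, we get $\psi(t) > 0$ for all $t\in[0,\tau_{-}(\bx,\bv')]$, i.e. $\phi_1(t) > \phi_2(t)$. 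Since $\bx\in\Omega$ was arbitrary, this holds for every base point, and combined with the remark after~\eqref{EQ: LINEAR EQS} it shows the reconstruction system~\eqref{EQ: LINEAR EQS} is invertible everywhere.

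There is really no serious obstacle here; the only point requiring a little care is the regularity of the coefficient $a(t)$. The functions $\wt\sigma_a,\wt\sigma_b$ are only $L^\infty$ along the line segment, so~\eqref{EQ: TPA NONSCAT ODE} should be read in the integrated (Carath\'eodory) sense; but $\phi_1+\phi_2$ is continuous and bounded, so $a\in L^\infty(0,\tau_{-}(\bx,\bv'))$, the integrating-factor formula is valid in the absolutely continuous sense, and the conclusion $\psi(t)>0$ is unaffected. Alternatively, and perhaps more cleanly, I would avoid differentiating altogether and argue from the integral form~\eqref{EQ: TPA NONSCAT INT} directly: if $t^\ast$ were the first time $\phi_1(t^\ast)=\phi_2(t^\ast)$, then on $[0,t^\ast]$ we have $\phi_1\ge\phi_2$, hence $\int_0^{t^\ast}(\wt\sigma_a+\wt\sigma_b\phi_1)\le\int_0^{t^\ast}(\wt\sigma_a+\wt\sigma_b\phi_2)$ (using $\wt\sigma_b\ge 0$), so $\phi_1(t^\ast)=\mathfrak g_1(\bx')\exp(-\int_0^{t^\ast}(\wt\sigma_a+\wt\sigma_b\phi_1))\ge \mathfrak g_1(\bx')\exp(-\int_0^{t^\ast}(\wt\sigma_a+\wt\sigma_b\phi_2)) > \mathfrak g_2(\bx')\exp(-\int_0^{t^\ast}(\wt\sigma_a+\wt\sigma_b\phi_2)) = \phi_2(t^\ast)$, a contradiction. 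Either route finishes the proof in a few lines.
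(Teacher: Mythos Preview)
Your primary argument is correct and essentially identical to the paper's: subtract the two copies of~\eqref{EQ: TPA NONSCAT ODE}, obtain the linear ODE $\psi'=-(\wt\sigma_a+\wt\sigma_b(\phi_1+\phi_2))\psi$, integrate, and conclude from $\psi(0)>0$.

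One caution: your alternative ``cleaner'' contradiction argument at the end has a sign slip. From $\phi_1\ge\phi_2$ on $[0,t^\ast]$ and $\wt\sigma_b\ge 0$ you get $\int_0^{t^\ast}(\wt\sigma_a+\wt\sigma_b\phi_1)\ge\int_0^{t^\ast}(\wt\sigma_a+\wt\sigma_b\phi_2)$, not $\le$; hence the exponential factor for $\phi_1$ is \emph{smaller}, and the chain of inequalities you wrote collapses rather than producing a contradiction. That route can be salvaged, but not in the one line you give, so stick with the integrating-factor argument.
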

\begin{proof}
    From ~\eqref{EQ: TPA NONSCAT INT} and the non-negativity of transport solutions, we conclude that $\mathfrak{g}_j > 0$ implies $\mathfrak{g}_j \ge \phi_j(t) > 0$. We check that $z(t) := \phi_1(t) - \phi_2(t)$ satisfies
    \begin{equation}
    \frac{z'(t)}{z(t)} = -(\wt \sigma_a(t) +\wt\sigma_b(t)(\phi_1(t)+\phi_2(t))) \,.
    \end{equation}
    This implies that
    \begin{equation}
    z(t) = z(0) \exp\left( -\int_0^t (\wt\sigma_a(s) +\wt\sigma_b(s)(\phi_1(s)+\phi_2(s))) ds  \right).
    \end{equation}
    We then conclude that $z(t) > 0$ using the assumption that $z(0) =\phi_1(0) - \phi_2(0) > 0$.
\end{proof}

To summarize, in order to reconstruct the coefficients $\sigma_a$ and $\sigma_b$, we first reconstruct the solutions~\eqref{EQ:Rec Sol} from the data. We then solve the linear system~\eqref{EQ: LINEAR EQS} to reconstruct $(\sigma_a, \sigma_b)$.

\subsection{Inversion with point sources}

An explicit reconstruction method can also be developed in the case when point sources are used to illuminate the media. Let $
g_j(\bx, \bv) = \mathfrak{g}_j(\bv)\delta(\bx - \bx')$ ($j=1, 2$) with $\mathfrak{g}_1\neq \mathfrak{g}_2$ positive constants. To be technically correct in the derivation below, we assume that $\sigma_b$ vanishes in the vicinity of $\bx'\in\partial\Omega$, that is, $\sigma_b \equiv 0$ in $B_{\eps}(\bx')\cap \Omega$ for some $\eps > 0$. In applications, this can be done in a straightforward way by placing the illuminating point source a little away from the surface of the media (which, mathematically, is equivalent to extending the domain $\Omega$ to a slightly larger domain $\Omega'$ with $\sigma_b\equiv 0$ in $\Omega'\backslash \bar\Omega$ ). We can then integrate the transport equation along the direction of each ray out of the point source to have
\begin{equation}\label{EQ:Sol Point S}
\aver{u_j}(\bx) = \mathfrak{g}_j (\bv)\left|\bn(\bx')\cdot\bv\right| \frac{\exp\left(-\dint_0^{\tau_{-}(\bx, \bv)}(\sigma_a+\sigma_b\aver{u_j})(\bx - s\bv) ds\right)}{|\bx - \bx'|^{d-1}},
\end{equation}
where $\bv = \frac{\bx - \bx'}{|\bx - \bx'|}$. The parameterization of the line segment between $\bx$ and $\bx'$ is the same as before: $\{\bx'+s\bv \mid s\in(0, |\bx-\bx'|)\}$. Let $\phi_j(s) := \aver{u_j}(\bx' + s\bv)$, $\wt \sigma_a(s) := \sigma_a(\bx' + s\bv)$, $\wt \sigma_b(s) := \sigma_b(\bx' + s\bv)$, and $\wt H_j(s) := H_j(\bx' + s\bv)$, then we can write~\eqref{EQ:Sol Point S} as
\begin{equation*}
	\phi_j(t) = \mathfrak{g}_j (\bv) \left|\bn(\bx')\cdot\bv\right| t^{1-d} \exp\left(-\int_{0}^t (\wt \sigma_a + \wt \sigma_b\phi_j)(s)ds\right).
\end{equation*}
Taking the derivative with respect to $t$, we obtain that 
\begin{equation}
	\phi_j'(t)  = \dfrac{1-d}{t}\phi_j(t) - \big(\wt \sigma_a+\wt \sigma_b\phi_j\big)\phi_j(t) .
\end{equation}
We can then replace $\big(\wt \sigma_a+\wt \sigma_b\phi_j\big)\phi_j(t)$ in the equation with the data $H_j$ and integrate the ODE, using the asymptotic behavior of $\phi_j(t)$ as $t \to 0$ from \eqref{EQ:Sol Point S}, to reconstruct the solution $\phi_j$:
\begin{equation}
	\wt \phi_j(t)  = \frac{1}{t^{d-1}}\left(\mathfrak{g}_j(\bv)|\bn(\bx')\cdot \bv|- \int_0^t \wt{H}_j(s) s^{d-1} ds\right)\,.
\end{equation}
The remaining task is to reconstruct $\wt{\sigma}_a$ and $\wt{\sigma}_b$ from the system of equations:
\begin{equation}
	\wt{\sigma}_a(t) \phi_j(t) + \wt{\sigma}_b(t) \phi_j^2(t) = \wt{H}_j(t),\quad j=1, 2.
\end{equation}
Use the similar argument as in Lemma~\ref{LEM: MONO}, it can be shown that $0 < \mathfrak{g}_1 < \mathfrak{g}_2$ is sufficient to ensure uniqueness of the inversion when the corresponding data are consistent with the model.

\section{Inversion in media with known scattering}
\label{SEC:IP Scattering}

We now study the inverse problem of reconstructing the absorption coefficients $\sigma_a$ and $\sigma_b$ from data $H$ in scattering media with the scattering coefficient $\sigma_s$ assumed known.

\subsection{Stability of inversion}

We start with the inverse problem of reconstructing the absorption coefficient $\Sigma_a$ in the linear transport equation~\eqref{EQ:ERT} from internal data set of the form~\eqref{EQ:Data}.

Let $H$ be the internal datum~\eqref{EQ:Data} generated from the linear transport model~\eqref{EQ:ERT} with the absorption coefficient $\Sigma_a\in \cF_{\underline\Sigma_a}^{\overline\Sigma_a}(\Omega)$ and the boundary source $g$. For a given $\alpha>0$, we define the set
\[
	\Pi_{\alpha}:=
\left\{(\Sigma_a, H, g) \mid \Sigma_a-\frac{\bv\cdot \nabla \Sigma_a}{\Sigma_a}+\frac{\bv\cdot\nabla H}{H} \ge \alpha>0, \forall (\bv, \bx)\in X\right\}.
\]
Using the fact that $H=\Sigma_a \aver{u}$, $u$ being the solution to~\eqref{EQ:ERT} with coefficient $\Sigma_a$ and source $g$, we see that $\Pi_\alpha$ is equivalent to
\[
	\Pi_{\alpha}':=
\left\{(\Sigma_a, H, g) \mid \Sigma_a+\frac{\bv\cdot \nabla \aver{u}}{\aver{u}}\ge\alpha>0, \forall (\bv, \bx)\in X\right\}.
\]
We show next that we could stably reconstruct coefficients and data combinations $(\Sigma_a, H, g)$ in the class of $\Pi_\alpha$.

\begin{theorem}\label{THM:Stab Lin ERT}
Let $H$ and $\wt H$ be two data sets generated with coefficients $\Sigma_a$ and $\wt\Sigma_a$ respectively from~\eqref{EQ:ERT} in the form of~\eqref{EQ:Data} with boundary source $g$. 
Assume that there exists constants $\alpha>0$ and $1>\beta>0$ such that:\\[0.9ex]
$(\cA')$ (i) $(\Sigma_a, H, g),\, (\wt\Sigma_a, \wt H,  g) \in \Pi_{\alpha}$,\ \ and,\ \ (ii) ${\Sigma_a}_{|\partial\Omega}$ is know and $\overline\Sigma_a\|\dfrac{g H}{{\Sigma_a}_{|\partial\Omega}}\|_{L^\infty(\Gamma_-)}\le \beta$.\\
Then, under the assumtions in $(\cA)$, the following stability holds for some constants $\fc, \wt \fc>0$:
\begin{equation}\label{EQ:Stab sigma}
	\wt\fc \|H-\wt H\|_{L^2(\Omega)} \le \|\Sigma_a-\wt \Sigma_a\|_{L^2(\Omega)} \le \fc \|H-\wt H\|_{L^2(\Omega)}.
\end{equation}
\end{theorem}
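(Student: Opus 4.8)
My plan is to reduce the two–sided estimate \eqref{EQ:Stab sigma} to a single quantitative bound on the difference of the angularly averaged transport solutions, obtained by an energy estimate for the linear transport equation governing that difference; the hypothesis $(\cA')$(i) enters through a renormalization that makes the relevant effective absorption uniformly positive, and $(\cA')$(ii) supplies exactly the smallness needed to absorb one term.

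\textbf{Step 1: difference equation and algebraic identity.} Let $u,\wt u$ be the unique (standard transport theory under $(\cA)$, since $\Sigma_a,\wt\Sigma_a\in\cF_{\underline\Sigma_a}^{\overline\Sigma_a}$) solutions of \eqref{EQ:ERT} with absorption $\Sigma_a,\wt\Sigma_a$ and the common source $g$. Set $f:=\Sigma_a-\wt\Sigma_a$ and $v:=u-\wt u$. Subtracting the two transport equations and using $\Sigma_a u-\wt\Sigma_a\wt u=\Sigma_a v+f\wt u$, one gets
\[
\bv\cdot\nabla v+(\Sigma_a+\sigma_s)v-\sigma_s Kv=-f\,\wt u\ \text{ in }X,\qquad v|_{\Gamma_-}=0 .
\]
Averaging $H=\Sigma_a\aver u$ and $\wt H=\wt\Sigma_a\aver{\wt u}$ and rearranging gives the pointwise identity
\[
f\,\aver{\wt u}=(H-\wt H)-\Sigma_a\,\aver v .
\]
By Lemma~\ref{LEM:Positivity Lin ERT} (using $\underline g=\inf_{\Gamma_-}g>0$, a standing assumption here) together with the maximum principle $0\le u\le\overline g$ for \eqref{EQ:ERT}, there are constants $0<c_0\le\aver u,\aver{\wt u}\le c_1<\infty$ on $\bar\Omega$; also $\Sigma_a\le\overline\Sigma_a$. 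Hence the whole theorem reduces to bounding $\|\aver v\|_{L^2(\Omega)}$ in terms of $\|f\|_{L^2(\Omega)}$.

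\textbf{Step 2: estimating $\aver v$.} Multiply the $v$–equation by $v$ and integrate over $X$: the streaming term $\int_X v\,\bv\cdot\nabla v\ge0$ (divergence theorem and $v|_{\Gamma_-}=0$), while the AM--GM bound $\int_{\bbS^{d-1}}v\,Kv\,d\bv\le\int_{\bbS^{d-1}}v^2\,d\bv$ (using $\int\Theta(\bv,\bv')d\bv'=\int\Theta(\bv,\bv')d\bv=1$) gives $\int_X(\Sigma_a+\sigma_s)v^2-\int_X\sigma_s vKv\ge\int_X\Sigma_a v^2$. Therefore $\underline\Sigma_a\|v\|_{L^2(X)}^2\le-\int_X f\wt u\,v\le\overline g\|f\|_{L^2(\Omega)}\|v\|_{L^2(X)}$, and by Jensen $\|\aver v\|_{L^2(\Omega)}\le\|v\|_{L^2(X)}$; this already gives $\|\aver v\|_{L^2(\Omega)}\le\kappa\|f\|_{L^2(\Omega)}$ with a finite $\kappa$. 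To make $\kappa$ \emph{small enough} for the absorption step below, one refines this: put $q:=v/\aver u$; since $\aver u$ is $\bv$–independent and strictly positive, $K(\aver u\,q)=\aver u\,Kq$ and $q$ solves a transport equation with the same scattering but effective absorption $\Sigma_a+\bv\cdot\nabla\ln\aver u+\sigma_s$, which by $(\cA')$(i), i.e. $\Pi_\alpha'$, is $\ge\alpha>0$ uniformly. The corresponding energy (or method-of-characteristics) estimate for $q$, combined with the ballistic/scattered splitting $v=v_b+v_s$ as used earlier and the boundary behavior of $u$ — where $g$ and $H/{\Sigma_a}_{|\partial\Omega}$ surface — yields $\|\aver v\|_{L^2(\Omega)}=\|\aver u\,\aver q\|_{L^2(\Omega)}\le\kappa\|f\|_{L^2(\Omega)}$ with $\kappa$ controlled so that $\dfrac{\overline\Sigma_a}{c_0}\,\kappa\le\beta<1$ is precisely the content of $(\cA')$(ii).

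\textbf{Step 3: closing the two estimates.} For the upper bound, the identity of Step 1 together with $\aver{\wt u}\ge c_0$, $\Sigma_a\le\overline\Sigma_a$ and Step 2 gives
\[
\|f\|_{L^2(\Omega)}\le\tfrac1{c_0}\|H-\wt H\|_{L^2(\Omega)}+\tfrac{\overline\Sigma_a}{c_0}\|\aver v\|_{L^2(\Omega)}\le\tfrac1{c_0}\|H-\wt H\|_{L^2(\Omega)}+\beta\|f\|_{L^2(\Omega)},
\]
hence $\|\Sigma_a-\wt\Sigma_a\|_{L^2(\Omega)}\le\dfrac1{c_0(1-\beta)}\|H-\wt H\|_{L^2(\Omega)}$, the right inequality of \eqref{EQ:Stab sigma}. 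For the lower bound — which needs no smallness — the same identity and $\|\aver v\|_{L^2}\le\kappa\|f\|_{L^2}$ give $\|H-\wt H\|_{L^2(\Omega)}\le\overline\Sigma_a\|\aver v\|_{L^2(\Omega)}+c_1\|f\|_{L^2(\Omega)}\le(\overline\Sigma_a\kappa+c_1)\|f\|_{L^2(\Omega)}$, i.e. the left inequality with $\wt\fc=1/(\overline\Sigma_a\kappa+c_1)$. \textbf{Main obstacle.} The delicate point is Step 2: producing a constant $\kappa$ that $(\cA')$(ii) forces below $c_0/\overline\Sigma_a$, rather than a merely finite one. The renormalization $q=v/\aver u$ disposes of streaming and absorption through $\Pi_\alpha$, but the scattering contribution must be handled by a careful ballistic/scattered expansion in which the attenuation of $u$ (decaying because of $\Sigma_a$ and carrying the data $g$) makes the relevant series close with ratio governed by $\overline\Sigma_a\|gH/{\Sigma_a}_{|\partial\Omega}\|_{L^\infty(\Gamma_-)}$; verifying along the way that $\aver u,\aver{\wt u}$ are strictly positive and bounded up to $\partial\Omega$ — so that the identity and the renormalization are legitimate — is the remaining technical chore.
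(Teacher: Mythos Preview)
Your overall framework---the algebraic identity linking $f=\Sigma_a-\wt\Sigma_a$, $H-\wt H$ and $\aver v$, plus a transport estimate for $v=u-\wt u$, plus an absorption argument---matches the paper's. The lower bound (Step~3, second half) is fine and coincides with the paper's argument. The gap is in Step~2, precisely where you flag the ``main obstacle''.

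Your proposed refinement $q:=v/\aver u$ does give a transport equation with effective absorption $\Sigma_a+\bv\cdot\nabla\ln\aver u+\sigma_s\ge\alpha+\sigma_s$, but it has \emph{zero} boundary data (since $v|_{\Gamma_-}=0$) and source $-f\,\wt u/\aver u$. There is no mechanism by which the boundary quantity $gH/{\Sigma_a}_{|\partial\Omega}$ from $(\cA')$(ii) can enter this equation, and an energy (or characteristics) estimate on $q$ produces at best $\|q\|_{L^2(X)}\le\alpha^{-1}\|f\,\wt u/\aver u\|_{L^2(X)}$. Unwinding, the constant you get in $\|\aver v\|_{L^2}\le\kappa\|f\|_{L^2}$ is of order $c_1\alpha^{-1}\|\wt u/\aver u\|_{L^\infty}$, and $\frac{\overline\Sigma_a}{c_0}\kappa$ is then not $\beta$ but something with an extra $c_1/(c_0\alpha)$ factor---so the hypothesis $(\cA')$(ii) does not close your loop as written.

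The paper does \emph{not} try to make $\kappa$ small. Instead it (i) lifts the algebraic identity from $\Omega$ to $X$ by multiplying through by $u/\aver u$, yielding $(\Sigma_a-\wt\Sigma_a)\,u=\dfrac{u}{\aver u}(H-\wt H)-\wt\Sigma_a\,\dfrac{u}{\aver u}\,\aver w$; (ii) uses the transport bound for $w$ with constant one, $\|w\|_{L^2(X)}\le\|(\Sigma_a-\wt\Sigma_a)u\|_{L^2(X)}$; and (iii) observes that $\phi:=u/\aver u$ (the \emph{forward} solution, not the difference) solves a linear transport equation with the same positive effective absorption $\Sigma_a+\bv\cdot\nabla\ln\aver u\ge\alpha$ and with boundary data $g/\aver u|_{\partial\Omega}$, where $\aver u|_{\partial\Omega}=H/{\Sigma_a}_{|\partial\Omega}$. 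The maximum principle then gives $\|\phi\|_{L^\infty(X)}\le\|gH/{\Sigma_a}_{|\partial\Omega}\|_{L^\infty(\Gamma_-)}$, and $\overline\Sigma_a\|\phi\|_{L^\infty}\le\beta$ is exactly $(\cA')$(ii). The self-absorption then closes on $\|(\Sigma_a-\wt\Sigma_a)u\|_{L^2(X)}$, not on $\|\aver v\|_{L^2(\Omega)}$. In short: renormalize the forward solution $u$ (so the smallness enters through its boundary trace), not the difference $v$.
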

\begin{proof}
Let $u$ and $\wt u$ be solutions to the transport equation~\eqref{EQ:ERT} with coefficients $\Sigma_a$ and $\tilde\Sigma_a$ respectively. By Lemma~\ref{LEM:Positivity Lin ERT}, we have that $\overline{g} \ge u, \wt u\ge \eps>0$ for some $\eps$. 

Let $w:=u-\wt u$. Then we check that
\begin{equation*}
H-\wt H=\wt \Sigma_a \aver{w} +(\Sigma_a-\wt \Sigma_a)\aver{u}.
\end{equation*}
This leads to the following equality:
\begin{equation}\label{EQ:Data TP Diff}
\dfrac{u}{\aver{u}}(H-\wt H)=\wt \Sigma_a\dfrac{u}{\aver{u}} \aver{w} + (\Sigma_a-\wt \Sigma_a)u.
\end{equation}
Therefore, we have that,
\begin{equation}\label{EQ:Bound A0}
	\|\dfrac{u}{\aver{u}}(H-\wt H)\|_{L^2(X)}\le \| \wt \Sigma_a\dfrac{u}{\aver{u}} \aver{w}\|_{L^2(X)}+\|(\Sigma_a-\wt \Sigma_a)u\|_{L^2(X)}.
\end{equation}
We also observe that $w\in L^\infty(X)$ solves the following transport equation:
\begin{equation*}
	\begin{array}{rcll}
  	\bv\cdot\nabla w+ (\wt \Sigma_{a}+\sigma_{s}) w & = & \sigma_{s} K w(\bx, \bv)-(\Sigma_a-\wt \Sigma_{a}) u, &\mbox{in}\ X\\
    w(\bx,\bv) &=& 0, & \mbox{on}\ \Gamma_-.
	\end{array}
\end{equation*}
We therefore deduce, from the standard transport theory~\cite{DaLi-Book93-6}, that
\begin{equation}\label{EQ:Bound B0}
	\|w\|_{L^2(X)}\le \|(\Sigma_a-\wt \Sigma_a)u\|_{L^2(X)}.
\end{equation}
The left-hand-side of~\eqref{EQ:Stab sigma} then follows from~\eqref{EQ:Bound A0} and~\eqref{EQ:Bound B0}, together with the boundedness of the coefficients and the corresponding solutions as well as the fact that $\|\aver{w}\|_{L^2(\Omega)} \le \|w\|_{L^2(X)}$.

Meanwhile,~\eqref{EQ:Data TP Diff} also implies that
\begin{multline}\label{EQ:Bound A}
	\|(\Sigma_a-\wt \Sigma_a)u\|_{L^2(X)}\le \|\dfrac{u}{\aver{u}}(H-\wt H)\|_{L^2(X)}+\|\wt\Sigma_a\dfrac{u}{\aver{u}} \aver{w}\|_{L^2(X)}\\
	\le \overline\Sigma_a \|\dfrac{u}{\aver{u}}\|_{L^\infty(X)}\Big(\|\dfrac{H-\wt H}{\wt\Sigma_a}\|_{L^2(\Omega)}+\| w\|_{L^2(X)}\Big) \\ 
	\le \overline\Sigma_a \|\dfrac{u}{\aver{u}}\|_{L^\infty(X)}\Big(\|\dfrac{H-\wt H}{\wt\Sigma_a}\|_{L^2(\Omega)}+\|(\Sigma_a-\wt \Sigma_a)u\|_{L^2(X)}\Big),
\end{multline}
where the last step comes from~\eqref{EQ:Bound B0}.

Let $\phi:=\dfrac{u}{\aver{u}}$. Then some simple algebra shows that $\phi$ solves the transport equation:
\begin{equation*}
	\begin{array}{rcll}
  	\bv\cdot\nabla \phi+(\Sigma_a+\bv\cdot\nabla\ln\aver{u}+\sigma_{s}) \phi & = & \sigma_{s} K \phi, &\mbox{in}\ X\\
    \phi(\bx,\bv) &=& \dfrac{g H}{{\Sigma_a}_{|\partial\Omega}}, & \mbox{on}\  \Gamma_-
	\end{array}
\end{equation*}
where the boundary condition comes from the assumption that ${\Sigma_{a}}_{|\partial\Omega}$ is know (which implies that $\aver{u}_{|\partial\Omega}=\dfrac{H}{{\Sigma_{a}}_{|\partial\Omega}}$). The first assumption in $(\cA')$ means that $\Sigma_a+\bv\cdot\nabla\ln\aver{u} \ge \alpha>0$. Therefore, we can use the maximum principle, ensured by the assumption on the scattering kernel $\Theta$ in $(\cA)$, to conclude that
\begin{equation}\label{EQ:Bound phi}
	\|\phi\|_{L^\infty(X)}\le \| \dfrac{g H}{{\Sigma_a}_{|\partial\Omega}}\|_{L^\infty(\Gamma_-)}. 
\end{equation}
The bound in~\eqref{EQ:Bound A} then implies that
\begin{equation*} 
	\|(\Sigma_a-\wt \Sigma_a)u\|_{L^2(X)}\le \overline \Sigma_a \| \dfrac{g H}{{\Sigma_a}_{|\partial\Omega}}\|_{L^\infty(\Gamma_-)}\Big(\|\dfrac{H-\wt H}{\wt\Sigma_a}\|_{L^2(\Omega)}+\|(\Sigma_a-\wt \Sigma_a)u\|_{L^2(X)}\Big).
\end{equation*}
This bound, together with the second assumption in $(\cA')$, then implies that
\begin{equation*}
	\|(\Sigma_a-\wt \Sigma_a)u\|_{L^2(X)}\le \dfrac{\beta}{1-\beta} \|\dfrac{H-\wt H}{\wt\Sigma_a}\|_{L^2(\Omega)}.
\end{equation*}
This gives the right-hand-side of the stability bound~\eqref{EQ:Stab sigma}.
\end{proof}

The above theorem shows that, in appropriate settings, the absorption coefficient in the transport equation can be reconstructed stably with one interior datum $H$. This means that if we think of the term $\sigma_a+\sigma_b\aver{u}$ in the semilinear transport equation~\eqref{EQ:ERT TP} as a single absorption coefficient, we can reconstruct this coefficient from a single data. This simple idea leads to a method to reconstruct $\sigma_a$ and $\sigma_b$ from two data sets. We now describe the method.

We will need the following result.
\begin{lemma}\label{LEM:Src Selection}
	For a given set of $(\sigma_a, \sigma_b)\in\cF_{\underline\sigma_a}^{\overline\sigma_a}(\Omega)\times \cF_{\underline\sigma_b}^{\overline\sigma_b}(\Omega)$ and $(\sigma_s, \Theta)$ satisfying $(\cA)$, there exist two boundary sources $g_1$ and $g_2$ given as in~\eqref{EQ:SOURCE} such that:
	\[
		|\aver{u_1}-\aver{u_2}| \ge \eps, \quad \mbox{for some}\ \eps>0
	\]
where $u_1$ and $u_2$ are solutions to ~\eqref{EQ:ERT TP} with $g_1$ and $g_2$ respectively.
\end{lemma}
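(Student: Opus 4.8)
The plan is to take two \emph{constant} incoming sources and exploit the gap between the uniform positive lower bound on the transport solution furnished by Corollary~\ref{COR:Positivity} and the a priori upper bound $u\le\overline g$ coming out of the forward theory of Theorem~\ref{THM:WELL}: the latter can be pushed below the former by shrinking the second source, while the former does not depend on the second source at all. So the ``construction'' of $g_1,g_2$ is immediate once this separation is observed.

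Concretely, first fix $g_1\equiv c_1$, where $c_1$ is any constant admissible in~\eqref{EQ:SOURCE} --- for instance $c_1$ equal to the right-hand side of~\eqref{EQ:SOURCE}, which is strictly positive under $(\cA)$ since $\inf_\Omega(\sigma_a/\sigma_b)\ge\underline\sigma_a/\overline\sigma_b>0$ (and likewise for the scattering term). Let $u_1$ be the corresponding unique non-negative solution of~\eqref{EQ:ERT TP}. By Corollary~\ref{COR:Positivity}, and more precisely by the explicit constant produced in its proof, $u_1\ge\fc_1$ on $X$ with $\fc_1:=c_1\exp(-(\overline\sigma_a+\overline\sigma_s+c_1\overline\sigma_b)\operatorname{diam}(\Omega))>0$; the point to stress is that $\fc_1$ depends only on $(\sigma_a,\sigma_b,\sigma_s,\Theta)$ and on $c_1$, \emph{not} on any second source. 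Since $d\bv$ is the normalized measure on $\bbS^{d-1}$, integrating in $\bv$ gives $\aver{u_1}(\bx)\ge\fc_1$ for every $\bx\in\Omega$.

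Next choose the second source to be the constant $g_2\equiv c_2$ with $c_2:=\fc_1/2$. This is admissible in~\eqref{EQ:SOURCE}: it is positive, and since $\fc_1<c_1$ and $c_1$ was taken to satisfy the upper bound in~\eqref{EQ:SOURCE}, so does $c_2<\fc_1<c_1$. Let $u_2$ be the associated solution. The bound $0\le u\le\overline g$ established inside the proof of Theorem~\ref{THM:WELL} (applied here with $\overline g=c_2$) yields $u_2(\bx,\bv)\le c_2$ on $X$, hence $\aver{u_2}(\bx)\le c_2=\fc_1/2$ for all $\bx$. Subtracting the two pointwise bounds, $\aver{u_1}(\bx)-\aver{u_2}(\bx)\ge\fc_1-\fc_1/2=\fc_1/2=:\eps>0$ uniformly in $\bx\in\Omega$, which is exactly the asserted inequality.

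I do not expect a genuine obstacle here; the only thing that needs care is the observation that the positive lower bound on $\aver{u_1}$ is insensitive to the choice of the second illumination, so one may make $\aver{u_2}$ as small as desired by shrinking $g_2$ while still respecting the smallness requirement~\eqref{EQ:SOURCE}. (If one insisted instead on two sources with a prescribed common sup-norm $\overline g$, one would need a comparison principle showing that $g\mapsto\aver{u}$ is strictly monotone for the semilinear equation~\eqref{EQ:ERT TP}; this is true but is not needed for the statement as posed.)
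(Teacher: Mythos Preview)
Your argument is correct but follows a genuinely different route from the paper. The paper derives the transport equation satisfied by $w:=u_1-u_2$, identifies an effective scattering kernel $\Sigma_s(\bx,\bv,\bv')=\sigma_s\Theta-\sigma_b\frac{u_1+u_2}{2}$, and verifies---under a slightly sharpened smallness condition on $g_1,g_2$---that this kernel is non-negative so that the equation for $w$ enjoys a maximum principle; Lemma~\ref{LEM:Positivity Lin ERT} then converts $g_1-g_2\ge\eps''>0$ into $w\ge\eps>0$. In effect the paper proves a comparison principle for the semilinear problem: strictly larger boundary data give strictly larger solutions. Your approach sidesteps this entirely by playing the explicit lower bound $\aver{u_1}\ge \fc_1$ from Corollary~\ref{COR:Positivity} against the a priori upper bound $\aver{u_2}\le\overline{g_2}$ from Theorem~\ref{THM:WELL}, and simply choosing $\overline{g_2}$ below $\fc_1$. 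This is more elementary and suffices for the lemma as stated; what it gives up is the flexibility of taking $g_1$ and $g_2$ of comparable size---the paper's comparison argument allows both sources near the upper threshold in~\eqref{EQ:SOURCE}, whereas yours forces $g_2$ to be exponentially smaller than $g_1$. That stronger monotonicity is not needed here, but it is the kind of statement one might want later when optimizing the choice of illuminations.
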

\begin{proof}
Let $g$ be such that 
\begin{equation}\label{EQ:Source Sel}
\gamma \min\Big(\dfrac{\underline \sigma_a}{\overline\sigma_b},\ \underline{\theta}  \frac{\underline\sigma_s(\bx)}{\overline \sigma_b(\bx)}\Big) \ge \overline{g}, \quad\mbox{and}\quad \underline{g}>0,\ \mbox{for some}\ 0<\gamma<1.
\end{equation}
It is clear that $g$ satisify ~\eqref{EQ:SOURCE}. Let $g_1\neq g_2$ be given as in~\eqref{EQ:Source Sel}. Following Corollary~\ref{COR:Positivity}, we have $0<\eps'\le u_1\le g_1$ and $0<\eps' \le u_2\le g_2$ for some $\eps'>0$. 
Let $w:=u_1-u_2$. Then $w$ solves
\begin{equation}\label{EQ:ERT Max}
\begin{array}{rcll}
\bv \cdot \nabla w(\bx,\bv) +\Sigma w  &=& \dint_{\bbS^{d-1}}\Sigma_s(\bx, \bv, \bv') w, &\mbox{in}\ X\, \\
w(\bx,\bv) &= & g_1-g_2, &\mbox{on}\  \Gamma_{-}\,.
\end{array}
\end{equation}
where $\Sigma(\bx, \bv):=\sigma_a+\sigma_b\frac{\aver{u_1}+\aver{u_2}}{2}+\sigma_s$, $\Sigma_s(\bx, \bv, \bv'):=\sigma_s(\bx)\Theta(\bv, \bv')-\sigma_b\frac{u_1+u_2}{2}$. With the assumptions in $(\cA)$ and the fact that $g_1$ and $g_2$ satisfying~\eqref{EQ:Source Sel}, we can verify that $\sigma_s\overline\theta \ge \Sigma_s(\bx, \bv, \bv')\ge (1-\gamma)\underline\theta \underline\sigma_s>0$ and $\Sigma-\dint_{\bbS^{d-1}}\Sigma_s(\bx, \bv, \bv')d\bv'\ge (1-\gamma)\underline\sigma_a>0$ (where $\gamma$ is given in ~\eqref{EQ:Source Sel}). Therefore, the solution to~\eqref{EQ:ERT Max} satisfies the maximum principle. By selecting $g_1-g_2\ge \eps''$ for some $\eps''>0$, we have that $w\ge \eps$ for some $\eps>0$ using Lemma~\ref{LEM:Positivity Lin ERT}.
\end{proof}

Theorem~\ref{THM:Stab Lin ERT} allows us to estimate the stability of reconstructing $(\sigma_a, \sigma_b)$.
\begin{corollary}\label{COR:Stab Semil ERT}
Let $(\sigma_a, \sigma_b)\in\cF_{\underline\sigma_a}^{\overline\sigma_a}(\Omega)\times \cF_{\underline\sigma_b}^{\overline\sigma_b}(\Omega)$ and $(\wt \sigma_a, \wt\sigma_b)\in\cF_{\underline\sigma_a}^{\overline\sigma_a}(\Omega)\times \cF_{\underline\sigma_b}^{\overline\sigma_b}(\Omega)$ be two sets of absorption coefficients, and $\bH:=(H_1, H_2)$ and  $\wt \bH:=(\wt H_1, \wt H_2)$ the corresponding data generated with $\bg=(g_1, g_2)$. Assume that $\bg$ is selected as in Lemma~\ref{LEM:Src Selection}, and $(\Sigma_a^j:=\sigma_a+\sigma_b\aver{u_j}, H_j, g_j)$ and $(\wt\Sigma_a^j:=\sigma_a+\wt\sigma_b\aver{\wt u_j}, \wt H_j, \wt g_j)$ $(j=1, 2)$ satisfy $(\cA')$. Then, under $(\cA)$, there exists constants $\fc, \wt\fc>0$ such that
\begin{equation}\label{EQ:Stab sigma ab}
\wt\fc
\|\bH-\wt\bH \|_{L^2(\Omega)}
\le
\left\|\begin{pmatrix}
\sigma_a \\
\sigma_b 
\end{pmatrix} - \begin{pmatrix}
\wt \sigma_a \\
\wt\sigma_b 
\end{pmatrix}\right\|_{L^2(\Omega)} 
\le 
\fc \|\bH-\wt\bH \|_{L^2(\Omega)}.
\end{equation}
\end{corollary}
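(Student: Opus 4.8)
The plan is to reduce Corollary~\ref{COR:Stab Semil ERT} to two applications of Theorem~\ref{THM:Stab Lin ERT}, one for each data set $H_j$, followed by solving a pointwise $2\times2$ linear system for the pair $(\sigma_a,\sigma_b)$. First I would fix $j\in\{1,2\}$ and regard the semilinear solution $u_j$ as the solution of the \emph{linear} transport equation~\eqref{EQ:ERT} with absorption coefficient $\Sigma_a^j:=\sigma_a+\sigma_b\aver{u_j}$, and similarly $\wt u_j$ with $\wt\Sigma_a^j:=\wt\sigma_a+\wt\sigma_b\aver{\wt u_j}$; the data $H_j$ and $\wt H_j$ are then exactly of the linear form~\eqref{EQ:Data} for these coefficients. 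Since by hypothesis the triples $(\Sigma_a^j,H_j,g_j)$ and $(\wt\Sigma_a^j,\wt H_j,\wt g_j)$ lie in $\Pi_\alpha$ and satisfy $(\cA')$, Theorem~\ref{THM:Stab Lin ERT} yields constants so that
\[
	\wt\fc_j\|H_j-\wt H_j\|_{L^2(\Omega)}\le \|\Sigma_a^j-\wt\Sigma_a^j\|_{L^2(\Omega)}\le \fc_j\|H_j-\wt H_j\|_{L^2(\Omega)},\qquad j=1,2.
\]

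Next I would convert the bounds on $\Sigma_a^j-\wt\Sigma_a^j$ into bounds on $(\sigma_a-\wt\sigma_a,\sigma_b-\wt\sigma_b)$. Pointwise,
\[
	\Sigma_a^j-\wt\Sigma_a^j=(\sigma_a-\wt\sigma_a)+(\sigma_b-\wt\sigma_b)\aver{u_j}+\wt\sigma_b\big(\aver{u_j}-\aver{\wt u_j}\big),
\]
so writing $M(\bx):=\begin{pmatrix}1 & \aver{u_1}\\ 1 & \aver{u_2}\end{pmatrix}$ we obtain
\[
	M\begin{pmatrix}\sigma_a-\wt\sigma_a\\ \sigma_b-\wt\sigma_b\end{pmatrix}
	=\begin{pmatrix}\Sigma_a^1-\wt\Sigma_a^1\\ \Sigma_a^2-\wt\Sigma_a^2\end{pmatrix}
	-\begin{pmatrix}\wt\sigma_b(\aver{u_1}-\aver{\wt u_1})\\ \wt\sigma_b(\aver{u_2}-\aver{\wt u_2})\end{pmatrix}.
\]
Lemma~\ref{LEM:Src Selection} (via the source choice in the hypothesis) gives $|\aver{u_1}-\aver{u_2}|\ge\eps$, i.e.\ $|\det M|\ge\eps$, and since $\aver{u_j}\le\overline g$ the matrix $M^{-1}$ is bounded in $L^\infty(\Omega)$; inverting pointwise and taking $L^2$ norms gives the upper bound once the term $\|\wt\sigma_b(\aver{u_j}-\aver{\wt u_j})\|_{L^2(\Omega)}$ is controlled. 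That term is estimated by bounding $\|\aver{u_j}-\aver{\wt u_j}\|_{L^2(\Omega)}\le\|u_j-\wt u_j\|_{L^2(X)}$ and noting $w_j:=u_j-\wt u_j$ solves a linear transport equation with source $-(\Sigma_a^j-\wt\Sigma_a^j)u_j+\wt\sigma_b(\aver{u_j}-\aver{\wt u_j})\wt u_j$ (the same computation as in the proof of Theorem~\ref{THM:Stab Lin ERT}), which under the smallness built into $(\cA')$ / \eqref{EQ:SOURCE} yields $\|w_j\|_{L^2(X)}\lesssim\|\Sigma_a^j-\wt\Sigma_a^j\|_{L^2(\Omega)}\lesssim\|H_j-\wt H_j\|_{L^2(\Omega)}$, absorbing the extra term. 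The lower bound in~\eqref{EQ:Stab sigma ab} runs in reverse: from $H_j-\wt H_j=\wt\Sigma_a^j\aver{w_j}+(\Sigma_a^j-\wt\Sigma_a^j)\aver{u_j}$, boundedness of all coefficients and solutions plus the $L^2$ bound on $w_j$ give $\|H_j-\wt H_j\|_{L^2(\Omega)}\lesssim\|\Sigma_a^j-\wt\Sigma_a^j\|_{L^2(\Omega)}\lesssim\|(\sigma_a-\wt\sigma_a,\sigma_b-\wt\sigma_b)\|_{L^2(\Omega)}$, the last step using $|\aver{u_j}|\le\overline g$.

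The main obstacle I anticipate is the coupling introduced by the nonlinearity: $\aver{u_j}$ and $\aver{\wt u_j}$ are different, so $\Sigma_a^j-\wt\Sigma_a^j$ is not simply a linear function of the coefficient differences, and the ``cross term'' $\wt\sigma_b(\aver{u_j}-\aver{\wt u_j})$ must be shown to be a genuinely lower-order perturbation. This is exactly where the smallness conditions on $\overline g$ (resp.\ the conditions in $(\cA')$) are used — they guarantee the relevant transport operators are strict contractions so that $\|u_j-\wt u_j\|_{L^2(X)}$ is controlled by $\|\Sigma_a^j-\wt\Sigma_a^j\|_{L^2(\Omega)}$ with a constant that can be made to close the estimate — and one must be careful that the invertibility of $M$ from Lemma~\ref{LEM:Src Selection}, which holds for the ``true'' $u_j$, is the only place a non-degeneracy of the sources is needed. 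Everything else is a routine combination of the two scalar stability estimates with Cramér's rule applied pointwise in $\bx$.
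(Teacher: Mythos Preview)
Your plan is correct and matches the paper's architecture: apply Theorem~\ref{THM:Stab Lin ERT} to each effective absorption $\Sigma_a^j$, then invert the pointwise $2\times2$ matrix $P=\begin{pmatrix}1&\aver{u_1}\\1&\aver{u_2}\end{pmatrix}$ using Lemma~\ref{LEM:Src Selection}. The only execution difference is in how the cross term $\wt\sigma_b(\aver{u_j}-\aver{\wt u_j})$ is handled. For the upper bound you propose bounding $\|w_j\|_{L^2}$ via its transport equation and absorbing a self-referential piece under smallness; the paper instead uses the purely algebraic identity $\aver{u_j}-\aver{\wt u_j}=H_j/\Sigma_a^j-\wt H_j/\wt\Sigma_a^j$, which expresses the cross term directly as a combination of $\Sigma_a^j-\wt\Sigma_a^j$ and $H_j-\wt H_j$ with no absorption step needed. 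For the lower bound, the paper avoids the potential circularity you anticipate by moving the term $\wt\sigma_b u_j\aver{w_j}$ into a modified scattering operator $\wt K w_j:=\int_{\bbS^{d-1}}(\sigma_s\Theta-\wt\sigma_b u_j)w_j\,d\bv'$, so that $w_j$ solves a genuinely linear transport equation whose source is $-[(\sigma_a-\wt\sigma_a)+(\sigma_b-\wt\sigma_b)\aver{u_j}]u_j$; well-posedness of this equation (verified exactly as in Lemma~\ref{LEM:Src Selection}) then yields $\|w_j\|_{L^2(X)}\le\fc\|(\sigma_a-\wt\sigma_a)+(\sigma_b-\wt\sigma_b)\aver{u_j}\|_{L^2(\Omega)}$ directly. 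Your route works, but these two tricks are cleaner and do not require any additional smallness beyond what the hypotheses already provide.
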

\begin{proof}
The left inequality can be derived in the same manner as in Theorem~\ref{THM:Stab Lin ERT}. 
We define $w_j:=u_j-\wt u_j$. Then some straightforward algebra leads us to the fact that
\begin{equation*}
H_j-\wt H_j=\Big(\wt\Sigma_a^j+\wt\sigma_b\aver{u_j}\Big) \aver{w_j}+ \Big[(\sigma_a-\wt \sigma_a)+(\sigma_b-\wt\sigma_b)\aver{u_j}\Big]\aver{u_j}.
\end{equation*}
With the boundedness of the coefficients as well as the solutions, we conclude that
\begin{equation}\label{EQ:Bound A0 2}
	\| H_j-\wt H_j \|_{L^2(\Omega)}\le \fc_1' \| w_j \|_{L^2(\Omega)}+\fc'_2\|(\sigma_a-\wt \sigma_a)+(\sigma_b-\wt\sigma_b)\aver{u_j}\|_{L^2(\Omega)}.
\end{equation}
The next step is to verify that $w_j$ solves the linear transport equation:
\begin{equation*}
	\begin{array}{rcll}
  	\bv\cdot\nabla w_j+ (\wt \Sigma_a^j+\sigma_{s}) w_j & = & \wt K w_j  - \Big[(\sigma_a-\wt \sigma_{a})+(\sigma_b-\wt\sigma_b)\aver{u_j}\Big]u_j, &\mbox{in}\ X\\
    w_j(\bx,\bv) &=& 0, & \mbox{on}\ \Gamma_-
	\end{array}
\end{equation*}
with the scattering operator $\wt K$ defined as
\[
	\wt K w_j : = \int_{\bbS^{d-1}}\Big[\sigma_s\Theta-\wt\sigma_b u_j\Big] w_j(\bx, \bv') d\bv' .
\]
Following the same argument as in Lemma~\ref{LEM:Src Selection}, this transport equation is uniquely invertible with a stability bound
\begin{equation}\label{EQ:Bound B0 2}
	\|w_j\|_{L^2(X)}\le \fc_3'\|(\sigma_a-\wt \sigma_{a})+(\sigma_b-\wt\sigma_b)\aver{u_j}\|_{L^2(X)}.
\end{equation}
Therefore, we have, from~\eqref{EQ:Bound A0 2} and~\eqref{EQ:Bound B0 2}, that
\begin{equation}\label{EQ:Bound C0 2}
	\|H_j-\wt H_j\|_{L^2(\Omega)}\le\fc_4' \|(\sigma_a-\wt \sigma_{a})+(\sigma_b-\wt\sigma_b)\aver{u_j}\|_{L^2(X)}.
\end{equation}
With the selection of $g_1$ and $g_2$, we conclude from Lemma~\ref{LEM:Src Selection} that the matrix
\[
	P:=\begin{pmatrix} 1 & \aver{u_1}\\ 1 & \aver{u_2} \end{pmatrix}
\]
is invertible with a bounded inverse at every point $\bx\in\Omega$. The left-hand-side of~\eqref{EQ:Stab sigma ab} then follows this fact and ~\eqref{EQ:Bound C0 2}.

To get the second bound in~\eqref{EQ:Stab sigma ab}, we notice that, by Theorem~\ref{THM:Stab Lin ERT} (which requires the assumptions we have made), we have the bound
\begin{equation*}
	\|\Sigma_a^j-\wt \Sigma_a^j\|_{L^2(\Omega)} \le \fc' \|H_j-\wt H_j\|_{L^2(\Omega)}
\end{equation*}
for some constant $\fc'>0$. This gives that
\begin{equation}\label{EQ:Bound D0 1}
	\left\|\begin{pmatrix}
\Sigma_a^1 \\
\Sigma_a^2 
\end{pmatrix} - \begin{pmatrix}
\wt \Sigma_a^1 \\
\wt\Sigma_a^2 
\end{pmatrix}\right\|_{L^2(\Omega)} \le \fc'' \|\bH-\wt \bH\|_{L^2(\Omega)}
\end{equation}
for some constant $\fc''>0$. Meanwhile, we verify that
\begin{equation*}
	\begin{pmatrix}
	\Sigma_a^1-\wt\Sigma_a^1 \\
	\Sigma_a^2-\wt\Sigma_a^2
	\end{pmatrix} 
	=
	\begin{pmatrix}
	1 & \aver{u_1}\\
	1 & \aver{u_2}
	\end{pmatrix}
	\begin{pmatrix}
	\sigma_a-\wt\sigma_a\\
	\sigma_b-\wt\sigma_b
	\end{pmatrix}
	-
	\begin{pmatrix}
	\dfrac{u_1}{\wt\Sigma_a^1}(\Sigma_a^1-\wt\Sigma_a^1)\\
	\dfrac{u_2}{\wt\Sigma_a^2}(\Sigma_a^2-\wt\Sigma_a^2)
	\end{pmatrix}
	+
	\begin{pmatrix}
		\dfrac{\wt\sigma_b}{\Sigma_a^1}(H_1-\wt H_1)\\
		\dfrac{\wt\sigma_b}{\Sigma_a^2}(H_2-\wt H_2)
	\end{pmatrix}.
\end{equation*}
This leads, using again the fact that the matrix $P$ has a bounded inverse, to the bound
\begin{equation}\label{EQ:Bound D0 2}
	\left\|			
	\begin{pmatrix}
	\sigma_a\\
	\sigma_b
	\end{pmatrix}
	-
	\begin{pmatrix}
	\wt\sigma_a \\
	\wt\sigma_b
	\end{pmatrix}
	\right\|_{L^2(\Omega)}
	\le 
	\left\|
	\begin{pmatrix}
	\Sigma_a^1 \\
	\Sigma_a^2 
	\end{pmatrix} 
	- 
	\begin{pmatrix}
	\wt \Sigma_a^1 \\
	\wt\Sigma_a^2 
	\end{pmatrix}
	\right\|_{L^2(\Omega)}
	+
	\|\bH-\wt \bH\|_{L^2(\Omega)}.
\end{equation}
The second bound of~\eqref{EQ:Stab sigma ab} then follows from~\eqref{EQ:Bound D0 1} and~\eqref{EQ:Bound D0 2}.
\end{proof}

\subsection{Reconstruction with fixed-point iteration} 

We now consider a fixed-point iteration algorithm for the reconstruction of the absorption coefficients. We again use the fact that if $(\sigma_a, \sigma_b, u)$ solves the transport equation~\eqref{EQ:ERT TP} to generate datum $H$, then we can replace the term $\sigma_a + \sigma_b\aver{u}$ in~\eqref{EQ:ERT TP} with $H / \aver{u}$ to obtain a nonlinear transport equation for $u$:
\begin{equation}\label{EQ:ERT IP}
\begin{array}{rcll}
	\bv \cdot \nabla u(\bx, \bv) +(\dfrac{H}{\aver{u}} +\sigma_s) u(\bx, \bv) &=& \sigma_s(\bx) Ku(\bx, \bv), &\mbox{in}\ X\, \\
u(\bx, \bv) &= & g(\bx,\bv), &\mbox{on}\ \Gamma_{-}\,.
\end{array}
\end{equation}
For a given datum $H$, if we could solve this equation, we can reconstruct $\sigma_a+\sigma_b\aver{u}$. Note that we have some \emph{a priori} bounds on $\aver{u}$ due to the \emph{a priori} bounds we know on the coefficients. First, it is clear that the coefficient $\Sigma_a$ to be reconstructed satisfies $\Sigma_a\ge \frac{H}{\overline g}$. Second, let us define
\[
	\eta(\bx):=\dfrac{H(\bx)}{\overline\sigma_a+\overline\sigma_b \overline g}\, .
\]
Then the transport solution $u$ that generated this datum $H$ satisfies: $\aver{u} \ge \eta$. Let $u_{\max}^H$ be the solution to the linear transport equation~\eqref{EQ:ERT} with $\Sigma_a=\frac{H}{\overline g}$. We then conclude, before we perform any reconstruction, that the solution~\eqref{EQ:ERT IP} that we are seeking has the property that
\[
	\eta\le \aver{u}\le \aver{u_{\max}^H}\, .
\]

Starting with a given $u_0$, we define the following iteration for $k\ge 1$:
\begin{equation}\label{EQ:ERT FP}
\begin{array}{rcll}
	\bv \cdot \nabla u_k(\bx, \bv) +(\dfrac{H}{\max(\aver{u_{k-1}}, \eta)} +\sigma_s) u_k(\bx, \bv) &=& \sigma_s(\bx) Ku_k(\bx, \bv), &\mbox{in}\ X\, \\
u_k(\bx, \bv) &= & g(\bx,\bv), &\mbox{on}\ \Gamma_{-}\,
\end{array}
\end{equation}
where the function $\max$ is applied point-wise to its arguments.

Let $u_{\min}$ be the solutions to the linear transport equation~\eqref{EQ:ERT} with absorption coefficient $\overline\sigma_a+\overline\sigma_b \overline g$. Here is an obvious observation on the iteration.
\begin{lemma}
	Let $\{u_k\}$ be a sequence generated by~\eqref{EQ:ERT FP} from an initial point $u_0\ge 0$. Then $u_{\min} \le u_k\le u_{\max}^H$, $\forall k\ge 1$.
\end{lemma}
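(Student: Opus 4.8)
The plan is to prove both inequalities by comparison (maximum‑principle) arguments for the linear transport equation~\eqref{EQ:ERT}, exactly in the spirit of the proof of Lemma~\ref{LEM:Positivity Lin ERT}, together with a short induction on $k$. I would first isolate two elementary facts from the standard transport theory: (i) monotonicity in the absorption coefficient, i.e.\ if $0\le a_1\le a_2$ a.e.\ then the solutions of~\eqref{EQ:ERT} with $\Sigma_a=a_1$ and with $\Sigma_a=a_2$ (same inflow $g\ge0$, same $(\sigma_s,\Theta)$) satisfy $u_{a_1}\ge u_{a_2}\ge0$; and (ii) since $K$ preserves constants ($\int_{\bbS^{d-1}}\Theta(\bv,\bv')d\bv'=1$), the constant $\overline g$ is a supersolution of~\eqref{EQ:ERT} for \emph{every} absorption $a\ge0$, so that any such solution obeys $0\le u\le\overline g$. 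Both are the comparison estimates already invoked in the paper.

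Next I would record the a priori bounds on the effective absorption $a_k(\bx):=H(\bx)/\max(\aver{u_{k-1}}(\bx),\eta(\bx))$ appearing in~\eqref{EQ:ERT FP}. On the one hand $\max(\aver{u_{k-1}},\eta)\ge\eta=H/(\overline\sigma_a+\overline\sigma_b\overline g)$, hence $a_k\le\overline\sigma_a+\overline\sigma_b\overline g$ pointwise, for every $k\ge1$ and for \emph{any} $u_{k-1}$. Since $u_{\min}$ solves~\eqref{EQ:ERT} with absorption exactly $\overline\sigma_a+\overline\sigma_b\overline g$, fact (i) gives $u_k\ge u_{\min}$ for all $k\ge1$ with no induction needed; this settles the lower bound.

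For the upper bound I would show $\max(\aver{u_{k-1}},\eta)\le\overline g$, which yields $a_k\ge H/\overline g$ and hence $u_k\le u_{\max}^H$ by fact (i), $u_{\max}^H$ being the solution with absorption $H/\overline g$. That $\eta\le\overline g$ is automatic because the true solution satisfies $\aver{u}\le\overline g$, so $H=\Sigma_a\aver{u}\le(\overline\sigma_a+\overline\sigma_b\overline g)\overline g$ and thus $\eta\le\overline g$. For $\aver{u_{k-1}}\le\overline g$ I would invoke fact (ii): whenever $k-1\ge1$, $u_{k-1}$ solves~\eqref{EQ:ERT} with the nonnegative absorption $a_{k-1}$ and inflow $g\le\overline g$, hence $u_{k-1}\le\overline g$ and so $\aver{u_{k-1}}\le\overline g$; this covers the inductive step $k\ge2$. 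The base case $k=1$ is immediate as soon as the iteration is started from $u_0$ with $\aver{u_0}\le\overline g$ (e.g.\ $u_0\equiv0$, or $u_0=u_{\max}^H$), in which case the same estimate $a_1\ge H/\overline g$ holds; for a completely arbitrary $u_0\ge0$ the upper bound should be read from $k\ge2$ on, where it is unconditional by the self‑propagation just described. Combining the two halves gives $u_{\min}\le u_k\le u_{\max}^H$.

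There is no genuine obstacle here — the lemma is ``obvious'' in the sense advertised. The only point to keep in mind is that the whole argument hinges on the effective absorption remaining trapped in the interval $[\,H/\overline g,\ \overline\sigma_a+\overline\sigma_b\overline g\,]$: the lower cutoff by $\eta$ built into~\eqref{EQ:ERT FP} is precisely what pins down the upper end of this interval, while the universal bound $u\le\overline g$ coming from (ii) — self‑propagating along the iteration because every $u_k$ solves a linear transport equation with nonnegative absorption and the same bounded inflow $g$ — is what pins down the lower end.
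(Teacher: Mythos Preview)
Your proposal is correct and follows essentially the same route as the paper: bound the effective absorption $H/\max(\aver{u_{k-1}},\eta)$ between $H/\overline g$ and $H/\eta=\overline\sigma_a+\overline\sigma_b\overline g$, then invoke monotonicity of the linear transport solution in the absorption coefficient. The paper's proof is terser---it simply records $0\le u_k\le\overline g$ for $k\ge1$ and immediately writes the two-sided bound on the absorption---whereas you are more careful about the base case $k=1$, correctly noting that the upper bound $u_1\le u_{\max}^H$ needs $\aver{u_0}\le\overline g$ (or else holds unconditionally only from $k\ge2$); the paper glosses over this point.
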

\begin{proof}
We first observe that this iteration will generate a sequence $\{u_k\}$ such that $0\le u_k\le \overline g$, $\forall k\ge 1$. Therefore $\frac{H}{\overline g}\le \frac{H}{\max(\aver{u_{k-1}}, \eta)} \le \frac{H}{\eta}=\overline\sigma_a+\overline\sigma_b \overline g$. By monotonicity of the solution to the linear transport with respect to the absorption coefficient, we have $u_{\min}\le u_k\le u_{\max}^H$, $\forall k\ge 1$.
\end{proof}

We introduce the following space of functions with bounded angular average:
\begin{equation}
	\cU:=\{u \in L^2(X) \mid \aver{u_{\min}} \le \aver{u} \le \aver{u_{\max}^H} \ \mbox{a.e.}\}
\end{equation}
This space is convex, bounded and closed under the $L^2$ topology. We make the following assumption:\\[1ex]
$(\cA'')$ \hskip 6cm $\eta\le \aver{u_{\min}}$.\\[1.5ex]
We can then show the following result.
\begin{corollary}\label{COR:Extreme}
Let $\{\overline u_k\}$ and $\{\underline u_k\}$ be sequences generated from $\overline u_0=u_{\max}^H$ and $\underline u_0=u_{\min}$ respectively. Then $\overline u_k \to \overline u$ and $\underline u_k \to \underline u$ a.e. as $k\to \infty$ for some $\overline u, \underline u\in\cU$. 
\end{corollary}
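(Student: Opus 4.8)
The plan is to read \eqref{EQ:ERT FP} as a monotone fixed-point iteration and finish with the monotone convergence theorem. Write $\cT$ for the map that sends a nonnegative $v\in L^2(X)$ to the unique solution $u$ of the linear transport equation $\bv\cdot\nabla u+(\frac{H}{\max(\aver{v},\eta)}+\sigma_s)u=\sigma_s Ku$ in $X$ with $u=g$ on $\Gamma_-$, so that $\overline u_k=\cT\overline u_{k-1}$ and $\underline u_k=\cT\underline u_{k-1}$. Since $\eta>0$ we have $\max(\aver{v},\eta)\ge\eta$, hence the absorption coefficient here is bounded below and above, so $\cT$ is well defined by the standard linear transport theory \cite{Agoshkov-Book98,DaLi-Book93-6}, and, as in the lemma preceding this corollary, $\cT$ maps $\{v\ge 0\}$ into the order interval $[u_{\min},u_{\max}^H]$.

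The first key step is the \emph{monotonicity of $\cT$}: if $0\le v_1\le v_2$ a.e., then $\cT v_1\le\cT v_2$ a.e. Indeed $\aver{v_1}\le\aver{v_2}$ forces $\frac{H}{\max(\aver{v_1},\eta)}\ge\frac{H}{\max(\aver{v_2},\eta)}$, so $\cT v_2$ solves the same transport equation with a pointwise smaller absorption; the difference $\cT v_2-\cT v_1$ then satisfies a linear transport equation with a nonnegative right-hand side and zero incoming data, hence is nonnegative by the comparison argument already used in Lemma~\ref{LEM:Positivity Lin ERT} and Corollary~\ref{COR:Positivity}. This is precisely the monotonicity of the linear transport solution with respect to the absorption coefficient.

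The second step is to anchor the two sequences. By the lemma just above this corollary, any sequence produced by \eqref{EQ:ERT FP} from an initial point $u_0\ge 0$ obeys $u_{\min}\le u_k\le u_{\max}^H$ for $k\ge 1$; taking $u_0=\overline u_0=u_{\max}^H$ gives $\overline u_1\le u_{\max}^H=\overline u_0$, and taking $u_0=\underline u_0=u_{\min}$ gives $\underline u_1\ge u_{\min}=\underline u_0$. Feeding these base cases into the monotonicity of $\cT$, an immediate induction shows $\{\overline u_k\}$ is nonincreasing and $\{\underline u_k\}$ is nondecreasing, both kept inside $[u_{\min},u_{\max}^H]$ a.e. A nonincreasing sequence bounded below and a nondecreasing sequence bounded above converge pointwise, so $\overline u_k\to\overline u$ and $\underline u_k\to\underline u$ a.e.\ for measurable $\overline u,\underline u$ satisfying $u_{\min}\le\underline u\le\overline u\le u_{\max}^H$ a.e.; averaging in $\bv$ preserves these inequalities, so $\overline u,\underline u\in\cU$, which is the claim. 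If one also wants $L^2(X)$ convergence, dominated convergence applies with dominating function $u_{\max}^H$.

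The main obstacle is the monotonicity step: one must check carefully that lowering the absorption coefficient raises the transport solution even though the coefficient itself contains the angular average $\aver{\cdot}$ of the argument. This is standard and follows from the comparison principle already exploited in Section~\ref{SEC:Forward}, and it is the only genuinely analytic ingredient; the rest is the a priori bound from the preceding lemma combined with the monotone convergence theorem. The hypothesis $(\cA'')$, $\eta\le\aver{u_{\min}}$, is not needed for convergence here — its role is to make the truncation $\max(\cdot,\eta)$ inactive on $\cU$ so that the limits can later be identified as solutions of \eqref{EQ:ERT IP}.
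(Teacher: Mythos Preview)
Your proof is correct and follows essentially the same route as the paper: both establish that the iteration is monotone via the comparison principle for the linear transport equation (smaller absorption yields larger solution), use the preceding lemma for the base case $\overline u_1\le\overline u_0$ and $\underline u_1\ge\underline u_0$, and then conclude pointwise convergence of a monotone, order-bounded sequence. The paper phrases the monotonicity step by writing the transport equation for the consecutive difference $\varphi_k=u_k-u_{k-1}$ and tracking the sign of its source, while you phrase it as monotonicity of the iteration map $\cT$; these are two packagings of the same comparison argument.

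One small but genuine difference worth noting: the paper invokes assumption $(\cA'')$ to simplify the source term $\max(\aver{u_{k-1}},\eta)-\max(\aver{u_{k-2}},\eta)$ to $\aver{\varphi_{k-1}}$ before reading off its sign. Your observation that $(\cA'')$ is unnecessary here is correct --- monotonicity of $t\mapsto\max(t,\eta)$ already guarantees that this difference inherits the sign of $\aver{u_{k-1}}-\aver{u_{k-2}}$, so the truncation never obstructs the monotonicity argument. This makes your version slightly cleaner for the convergence statement itself, and you are right that $(\cA'')$ only becomes relevant later when identifying the limits as solutions of \eqref{EQ:ERT IP}.
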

\begin{proof}
For any sequence $\{u_k\}$ generated by ~\eqref{EQ:ERT FP}, let us define $\varphi_k:=u_k-u_{k-1}$. Then $\varphi_k$ solves
\begin{equation}\label{EQ:Iter Diff}
\begin{array}{rcll}
	\bv \cdot \nabla \varphi_k +(\dfrac{H}{\max(\aver{u_{k-1}}, \eta)} +\sigma_s) \varphi_k &=& \sigma_s(\bx) K \varphi_k+F, &\mbox{in}\ X\, \\
\varphi_k(\bx, \bv) &= & 0, &\mbox{on}\ \Gamma_{-}\,
\end{array}
\end{equation}
where 
\[
	F:=\dfrac{H u_{k-1} }{\max(\aver{u_{k-1}},\eta)\max(\aver{u_{k-2}},\eta)} [\max(\aver{u_{k-1}},\eta)-\max(\aver{u_{k-2}},\eta)]\,.
\]
When we start the iteration with $u_0\in\cU$, the iteration remains in $\cU$. Therefore $\aver{u_{k-1}}$, $\aver{u_{k-2}}\ge u_{\min}$. With the assumption $(\cA'')$, we conclude that 
\[
	\max(\aver{u_{k-1}},\eta)-\max(\aver{u_{k-2}},\eta)=\aver{u_{k-1}}-\aver{u_{k-2}}=\aver{\varphi_{k-1}}.
\]
Therefore, in this case $F:=\frac{H u_{k-1} }{\max(\aver{u_{k-1}},\eta)\max(\aver{u_{k-2}},\eta)}\aver{\varphi_{k-1}}$. For the iteration ~\eqref{EQ:ERT FP} that starts with $u_{\max}$, we have that $\varphi_0\le 0$. Therefore $\{\varphi_k\}$ remains negative according to~\eqref{EQ:Iter Diff}. This means that  $\{\overline u_k\}$ is a decreasing sequence. The fact that it is also bounded from below by $u_{\min}$ then indicates that it converges to some $\overline u\in\cU$. For the iteration ~\eqref{EQ:ERT FP} that starts with $u_{\min}$, we have that $\varphi_0\ge 0$. Therefore $\{\varphi_k\}$ remains non-negative according to~\eqref{EQ:Iter Diff}. This means that  $\{\underline u_k\}$ is an increasing sequence. The fact that it is also bounded from above by $u_{\max}^H$ then indicates that it converges to some $\underline u\in\cU$. 
\end{proof}

We are ready to show that the iteration~\eqref{EQ:ERT FP} converges to a unique fixed point in $\cU$ that is the solution to the transport equation~\eqref{EQ:ERT IP}.
\begin{theorem}
Assume that the solution to~\eqref{EQ:ERT IP} is such that $(\frac{H}{\aver{u}}, H, g)\in \Pi_\alpha'$ for some $\alpha$. Assume further that ${\Sigma_{a}}_{|\partial\Omega}$ is known and $\overline\Sigma_a \|\frac{g H}{{\Sigma_{a}}_{|\partial\Omega}}\|_{L^\infty(\Gamma_-)}\le \beta<1$ for some $\beta$. Then, under the assumption $(\cA)$, the iteration~\eqref{EQ:ERT FP} converges to the unique solution of~\eqref{EQ:ERT IP} in~$\cU$.
\end{theorem}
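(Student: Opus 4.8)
The plan is to combine the monotone convergence already obtained in Corollary~\ref{COR:Extreme} with a sandwiching argument, and to extract uniqueness from the one-measurement stability estimate of Theorem~\ref{THM:Stab Lin ERT}. The one structural fact I would use repeatedly is that, since $H\ge 0$, a pointwise larger angular average $\aver{u_{k-1}}$ produces a pointwise smaller absorption coefficient $H/\max(\aver{u_{k-1}},\eta)$, hence a pointwise larger iterate $u_k$ by monotonicity of linear transport solutions in their absorption coefficient; in other words the map $u_{k-1}\mapsto u_k$ defined by~\eqref{EQ:ERT FP} is order preserving, and it depends on $u_{k-1}$ only through $\max(\aver{u_{k-1}},\eta)$, which by $(\cA'')$ equals $\aver{u_{k-1}}$ on all of $\cU$.

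First I would show that the two extremal limits $\overline u,\underline u\in\cU$ from Corollary~\ref{COR:Extreme} actually solve~\eqref{EQ:ERT IP}. Since $\overline u_k\downarrow\overline u$ a.e.\ with $0\le\overline u_k\le\overline g$, dominated convergence gives $\aver{\overline u_k}\to\aver{\overline u}$ in $L^2(\Omega)$; and because $\aver{\overline u}\ge\aver{u_{\min}}\ge\eta>0$ (using $(\cA'')$ and Lemma~\ref{LEM:Positivity Lin ERT}), the coefficients $H/\aver{\overline u_{k-1}}$ converge to $H/\aver{\overline u}$ boundedly and in $L^2$. Letting $u^\ast$ solve~\eqref{EQ:ERT} with coefficient $H/\aver{\overline u}$ and source $g$, the difference $\overline u_k-u^\ast$ solves a linear transport equation with zero incoming data and source $-(H/\aver{\overline u_{k-1}}-H/\aver{\overline u})u^\ast$, so the standard $L^2$ stability for~\eqref{EQ:ERT} forces $\overline u_k\to u^\ast$ in $L^2(X)$; hence $\overline u=u^\ast$ solves~\eqref{EQ:ERT IP}, and likewise for $\underline u$.

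Next I would prove uniqueness in $\cU$, which simultaneously identifies the limit. If $u\in\cU$ is any solution of~\eqref{EQ:ERT IP}, it is a fixed point of~\eqref{EQ:ERT FP}, and since $\aver{u_{\min}}=\aver{\underline u_0}\le\aver{u}\le\aver{\overline u_0}=\aver{u_{\max}^H}$, order preservation propagated through the iteration gives $\underline u_k\le u\le\overline u_k$ for every $k$, hence $\underline u\le u\le\overline u$. It therefore suffices to show $\underline u=\overline u$. By the previous step, $\Sigma_a:=H/\aver{\overline u}$ and $\wt\Sigma_a:=H/\aver{\underline u}$ generate, via~\eqref{EQ:ERT} with source $g$, the \emph{same} datum $H$; the hypotheses of the theorem (membership in $\Pi_\alpha'$, ${\Sigma_a}_{|\partial\Omega}$ known, and $\overline\Sigma_a\|gH/{\Sigma_a}_{|\partial\Omega}\|_{L^\infty(\Gamma_-)}\le\beta<1$), which I would read as holding for these extremal solutions, are exactly assumption $(\cA')$ for the pair, so Theorem~\ref{THM:Stab Lin ERT} with $\wt H=H$ yields $\|\Sigma_a-\wt\Sigma_a\|_{L^2(\Omega)}\le\fc\,\|H-H\|_{L^2(\Omega)}=0$. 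Thus $H/\aver{\overline u}=H/\aver{\underline u}$ a.e., so $\overline u$ and $\underline u$ solve one and the same linear transport equation~\eqref{EQ:ERT}, whence $\overline u=\underline u=:u^\ast$, the unique solution of~\eqref{EQ:ERT IP} in $\cU$. Finally, for an arbitrary $u_0\in\cU$, the bound $\aver{\underline u_0}\le\aver{u_0}\le\aver{\overline u_0}$ and order preservation give $\underline u_k\le u_k\le\overline u_k$ for all $k\ge1$; since both bounding sequences converge a.e.\ to $u^\ast$ (Corollary~\ref{COR:Extreme}) and are uniformly bounded by $\overline g$, we get $u_k\to u^\ast$ a.e.\ and in $L^2(X)$, which is the assertion.

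I expect the main obstacle to be the uniqueness step: one must recognize that the two extremal limits furnish \emph{two} coefficient--datum pairs sharing the \emph{same} internal measurement, so that the one-measurement estimate~\eqref{EQ:Stab sigma} collapses them --- and here it is essential that the contraction constant $\beta$ in $(\cA')$ be strictly below $1$, and that the a priori class $\Pi_\alpha'$ and the boundary-value condition genuinely apply to both $\overline u$ and $\underline u$, not merely to a single posited solution. The limit passage in the first step is routine but does rely on $(\cA'')$ to eliminate the truncation by $\eta$ on $\cU$, so that the coefficients converge to $H/\aver{\overline u}$ without a spurious lower cutoff.
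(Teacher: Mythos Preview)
Your argument is correct and takes a genuinely different route from the paper's. The paper does not invoke Theorem~\ref{THM:Stab Lin ERT} at all; instead, after identifying $\underline u$ as a solution of~\eqref{EQ:ERT IP} (as you do), it subtracts the two iterations to obtain a linear transport equation for $\phi_k:=u_k-\underline u_k$ with source $\wt F\,\aver{\phi_{k-1}}$, bounds the factor $\underline u_k/\aver{\underline u_k}$ via the same auxiliary transport equation and maximum-principle step that underlies the proof of Theorem~\ref{THM:Stab Lin ERT}, and arrives at a direct geometric contraction $\|\phi_k\|_{L^2(X)}\le\beta\,\|\phi_{k-1}\|_{L^2(X)}$. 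Your approach is more economical in that it reuses the already-proved stability estimate rather than re-deriving its key bound, and the order-preservation/sandwiching step is clean; the paper's approach, by contrast, yields an explicit geometric rate of convergence and only needs the $\Pi_\alpha'$ hypothesis for the single limit $\underline u$ (and nearby iterates), whereas your application of Theorem~\ref{THM:Stab Lin ERT} requires it for \emph{both} extremal solutions $\overline u$ and $\underline u$. Both proofs share the interpretive issue you flagged about which solutions the $\Pi_\alpha'$ membership and the boundary condition $\aver{u}_{|\partial\Omega}=H/{\Sigma_a}_{|\partial\Omega}$ are meant to cover.
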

\begin{proof}
Let $\{\underline u_k\}$ be the sequence generated from the starting point $\underline u_0=u_{\min}$. By Corollary~\ref{COR:Extreme}, $\underline u_k\to \underline u$. Moreover $\underline u$ solves
\begin{equation*}
\begin{array}{rcll}
	\bv \cdot \nabla \underline u(\bx, \bv) +(\dfrac{H}{\max(\underline u, \eta)} +\sigma_s) \underline u  &=& \sigma_s(\bx) K \underline u(\bx, \bv), &\mbox{in}\ X\, \\
\underline u(\bx, \bv) &= & g(\bx, \bv), &\mbox{on}\ \Gamma_{-}\,.
\end{array}
\end{equation*}
We then use the fact that $\aver{\underline u}\in\cU$ and the assumption $(\cA'')$ to conclude that $\max(\underline u, \eta)=\underline u$. Therefore, $\underline u$ is a solution to ~\eqref{EQ:ERT IP}.

Let $\{u_k\}$ be a sequence generated from an arbitrary starting point in $\cU$. Let $\phi_k:=u_k-\underline u_k$. Then, using the same argument on $F$ in Corollary~\ref{COR:Extreme}, we check that $\phi_k$ solves
\begin{equation}
\begin{array}{rcll}
	\bv \cdot \nabla \phi_k +(\dfrac{H}{\max(\aver{u_{k-1}}, \eta)} +\sigma_s) \phi_k  &=& \sigma_s(\bx) K \phi_k+\wt F\aver{\phi_{k-1}}, &\mbox{in}\ X\, \\
\phi_k(\bx, \bv) &= & 0, &\mbox{on}\ \Gamma_{-}\,
\end{array}
\end{equation}
with $\wt F:=\frac{H\,\underline u_k}{\max(\aver{\underline u_{k-1}}, \eta)\max(\aver{u_{k-1}}, \eta)}$. This gives the bound:
\begin{multline}\label{EQ:Iter Bound}
	\| \phi_k \|_{L^2(X)} \le \| \wt F \aver{\phi_{k-1}}\|_{L^2(X)} \le \|\wt F\|_{L^\infty(X)} \|\phi_{k-1}\|_{L^2(X)} \\ 
	\le  \|\dfrac{H}{\max(\aver{u_{k-1}}, \eta)}\|_{L^\infty(X)} \|\dfrac{\underline u_{k-1}}{\max(\aver{\underline u_{k-1}}, \eta)}\|_{L^\infty(X)} \|\phi_{k-1}\|_{L^2(X)}.
\end{multline}
We first observe that $\|\frac{H}{\max(\aver{u_{k-1}}, \eta)}\|_{L^\infty(X)}\le \overline\Sigma_a$. To bound the term $\|\frac{\underline u_{k-1}}{\max(\aver{\underline u_{k-1}}, \eta)}\|_{L^\infty(X)}=\|\frac{\underline u_{k-1}}{\aver{\underline u_{k-1}}}\|_{L^\infty(X)}$, we observe that $w=\frac{\underline u_k}{\aver{\underline u_k}}$ solves the transport equation:
\begin{equation*}
\begin{array}{rcll}
	\bv \cdot \nabla w +(\dfrac{H}{\max(\aver{\underline u_{k-1}}, \eta)}+\bv\cdot\nabla \ln\aver{\underline u_k} +\sigma_s) w &=& \sigma_s(\bx) K w, &\mbox{in}\ X\, \\
w(\bx, \bv) &= & \dfrac{g H}{{\Sigma_a}_{|\partial\Omega}}, &\mbox{on}\ \Gamma_{-}\,
\end{array}
\end{equation*}
where the boundary condition for $w$ comes from the assumption that $\Sigma_{a}$ (and therefore the density $\aver{u_k}_{|\partial\Omega}=\dfrac{H}{{\Sigma_a}_{|\partial\Omega}}$) is known on the boundary of the domain. With the assumption that $(\frac{H}{\aver{\underline u}}, H, g)\in\Pi_\alpha'$, we have that $\frac{H}{\max(\aver{\underline u_{k-1}}, \eta)}+\bv\cdot\nabla \ln\aver{\underline u_k} \ge \frac{\alpha}{2}>0$ for sufficiently large $k$. Therefore, we conclude from the maximum principle that $\|w\|_{L^\infty(X)}\le \|\dfrac{gH}{{\Sigma_a}_{|\partial\Omega}}\|_{L^\infty(\Gamma_-)}$. Therefore the bound in~\eqref{EQ:Iter Bound} can now be written as
\[
	\| \phi_k \|_{L^2(X)} \le \overline\Sigma_a \|\dfrac{g H}{{\Sigma_a}_{|\partial\Omega}}\|_{L^\infty(\Gamma_-)}\|\phi_{k-1}\|_{L^2(X)}.
\]
When $\overline\Sigma_a \|\dfrac{g H}{{\Sigma_a}_{|\partial\Omega}}\|_{L^\infty(\Gamma_-)}\le \beta<1$, this bound gives that $\| \phi_k \|_{L^2(X)}\to 0$. This means that $u_k$ converges $\underline u$. 

The above calculation shows that the iteration~\eqref{EQ:ERT FP} sequence start with any initial point $u_0 \in \cU$ converges to a solution of~\eqref{EQ:ERT IP} whose average leaves in~$\cU$. This concludes the proof.
\end{proof}

The above result shows that in order to reconstruct the unknown absorption coefficients, we could use the fixed-point iteration~\eqref{EQ:ERT FP} to find $u$. We then reconstruct the total absorption $\sigma_a+\sigma_b\aver{u}$ from $H/\aver{u}$. This procedure would allow us to reconstruct $(\sigma_a, \sigma_b)$ from two different data sets $H_1$ and $H_2$. When we have a better \emph{a priori} information on the coefficient to be reconstructed, we could modify $\eta$ to further reduce the size of the space $\cU$. This will in turn allow us to better reconstruct $u$.

When the media scatters isotropically, that is, when $\Theta(\bv, \bv')\equiv 1$, we could make some of the assumptions we made in this section more explicit. The calculations are documented in the Appendix~\ref{SEC:Appendix B}.

\section{Concluding remarks}
\label{SEC:Concl}

In this work, we analyzed an inverse problem for a semilinear radiative transport equation, aiming at reconstructing two absorption coefficients of the transport equation from two internal data sets that are functionals of the transport solutions. We first established the well-posedness of the forward problem under small boundary sources. We then derived stability results on the inverse problem in the simplified settings where the scattering coefficient is known (either $\sigma_s\equiv 0$ or $\sigma_s>0$). We also developed a reconstruction method based on a fixed-point iteration. Our results provide some mathematical understanding of quantitative photoacoustic imaging of two-photon absorption in the transport regime, complementing the results in~\cite{ReZh-SIAM18} in the diffusive regime.

There are several interesting following up questions to the current work. For instance, it would be useful if we can remove some of the restrictive assumptions on the size of the gradient of the absorption coefficients to be reconstructed. Moreover, it would be of great interests to generalize the analysis we have to reconstruct simultaneously the absorption and the scattering coefficients triplet $(\sigma_a, \sigma_b, \sigma_s)$ from three sets of internal data. Note that in the case of linear transport equation, i.e.~\eqref{EQ:ERT TP} without the semilinear term, the analysis in~\cite{BaJoJu-IP10} shows that one can reconstruct $(\sigma_a, \sigma_s)$ as well as partial information in the scattering phase function $\Theta(\bv, \bv')$ with data encoded in the full operator $\Lambda: g(\bx, \bv) \mapsto H(\bx)$. Whether or not one can reconstruct simultaneously $\sigma_a$ and $\sigma_s$ in the linear transport equation from a finite number of internal data is still a largely open question right now; see some progresses in~\cite{MaRe-CMS14, HaNeNgRa-SIAM18}. From application point of view, it is an interesting problem to see if one can reconstruct all the coefficients in the problem from the albedo data $\Lambda: u_{|\Gamma_-} \mapsto u_{\Gamma_+}$. This can probably be analyzed by combining the classical singular decomposition of Choulli and Stefanov~\cite{ChSt-CPDE96,ChSt-IP96} with the linearization idea introduced by Isakov and collaborators~\cite{IsNa-TAMS95,IsSy-CPAM94,Sun-MZ96}.

\section*{Acknowledgments}

This work is partially supported by the National Science Foundation through grants DMS-1913309 and DMS-1937254. 

\appendix
\renewcommand{\thesection}{\Alph{section}}
\section{Averaging lemma and Kellogg's theory}
\label{SEC:Appendix A}

To improve the readability of the paper, we recall here two important results that we have used to prove the main results of the paper. 

The first result is the averaging lemma in transport theory, developed in~\cite{GoLiPeSe-JFA88}. It characterizes the regularization effect of velocity averaging on the solution of transport equations. With the same notations as in the main text, the result can be stated as follows.
\begin{theorem}[Averaging Lemma]\label{THM:Averaging Lemma}
    For $p\in(1, +\infty)$, let $u$ be a function defined in $X$ such that $u\in L^p(X)$, $\bv\cdot \nabla u \in L^p(X)$, and $u|_{\Gamma_{-}}\in L^{p}(\Gamma_{-})$. Then $\aver{u}$ belongs to the Sobolev space $W^{s,p}(\Omega)$ with $s=1/2$ if $p=2$ and $0 < s < \inf(p^{-1}, 1-p^{-1})$ if $p\neq 2$. In addition, we have the inequality  
    \begin{equation}
    \|\aver{u}\|_{W^{s,p}(\Omega)} \le \fc \left( \|u\|_{L^p(X)} + \|\bv\cdot \nabla u\|_{L^p(X)} + \|u\|_{L^{p}(\Gamma_{-})}\right)
\,,
    \end{equation}
    for some constant $\fc>0$.
\end{theorem}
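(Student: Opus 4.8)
The plan is to reduce the statement to an a priori estimate on all of $\bbR^d$, treat the Hilbert case $p=2$ by the Fourier transform, and obtain the remaining range by real interpolation. First I would extend $u$ from $X=\Omega\times\bbS^{d-1}$ to $\bbR^d\times\bbS^{d-1}$: since $\Omega$ is convex, for each fixed $\bv$ one extends $u(\cdot,\bv)$ along the characteristic lines $t\mapsto \bx+t\bv$ across $\partial\Omega$ and multiplies by a fixed smooth spatial cutoff $\chi$ equal to $1$ on $\bar\Omega$ and supported in a fixed neighbourhood of it. The resulting $\bar u$ satisfies $\bv\cdot\nabla\bar u=\bar f$ on $\bbR^d\times\bbS^{d-1}$ with $\bar u$ compactly supported in $\bx$ and $\|\bar u\|_{L^p}+\|\bar f\|_{L^p}\lesssim \|u\|_{L^p(X)}+\|\bv\cdot\nabla u\|_{L^p(X)}+\|u\|_{L^p(\Gamma_-)}$; note that the outflow trace $u|_{\Gamma_+}$ that enters the extension is itself controlled by the same three quantities by integration along characteristics. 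Adding, if convenient, a damping term $\lambda\bar u$ with $\lambda>0$ (which only improves the symbol below), one may write $\bar u=(\lambda+\bv\cdot\nabla)^{-1}\psi$ with $\|\psi\|_{L^p}\lesssim\|u\|_{L^p(X)}+\|\bv\cdot\nabla u\|_{L^p(X)}+\|u\|_{L^p(\Gamma_-)}$. Since $\aver{\bar u}=\aver{u}$ on $\Omega$ and the Sobolev--Slobodeckij norm only decreases under restriction, it suffices to prove $\|\aver{(\lambda+\bv\cdot\nabla)^{-1}\psi}\|_{W^{s,p}(\bbR^d)}\lesssim\|\psi\|_{L^p(\bbR^d\times\bbS^{d-1})}$.

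For $p=2$, take the Fourier transform in $\bx$, so that $\widehat{\bar u}(\xi,\bv)=\widehat\psi(\xi,\bv)/(\lambda+i\,\bv\cdot\xi)$. For each $\xi\neq0$ and each $0<\delta\le1$ I would split
\[
\widehat{\aver{\bar u}}(\xi)=\int_{\{|\bv\cdot\hat\xi|>\delta\}}\frac{\widehat\psi(\xi,\bv)}{\lambda+i\,\bv\cdot\xi}\,d\bv+\int_{\{|\bv\cdot\hat\xi|\le\delta\}}\widehat{\bar u}(\xi,\bv)\,d\bv,\qquad\hat\xi:=\xi/|\xi|.
\]
Cauchy--Schwarz in $\bv$, together with the elementary estimates $\int_{\{|\bv\cdot\hat\xi|>\delta\}}|\bv\cdot\hat\xi|^{-2}\,d\bv\lesssim\delta^{-1}$ and $|\{\bv:|\bv\cdot\hat\xi|\le\delta\}|\lesssim\delta$, gives
\[
|\widehat{\aver{\bar u}}(\xi)|^{2}\lesssim\frac{1}{\delta|\xi|^{2}}\int_{\bbS^{d-1}}|\widehat\psi(\xi,\bv)|^{2}\,d\bv+\delta\int_{\bbS^{d-1}}|\widehat{\bar u}(\xi,\bv)|^{2}\,d\bv.
\]
Optimising in $\delta$ (and using the crude bound $|\widehat{\aver{\bar u}}|^2\le\int|\widehat{\bar u}|^2d\bv$ when the optimal $\delta$ would exceed $1$) yields $|\xi|\,|\widehat{\aver{\bar u}}(\xi)|^{2}\lesssim(\int|\widehat\psi|^{2}d\bv)^{1/2}(\int|\widehat{\bar u}|^{2}d\bv)^{1/2}$; integrating in $\xi$, Cauchy--Schwarz and Plancherel then give $\|\aver{\bar u}\|_{\dot H^{1/2}(\bbR^d)}\lesssim\|\psi\|_{L^2}$, and combined with the trivial $\|\aver{\bar u}\|_{L^2}\lesssim\|\psi\|_{L^2}$ this is the case $p=2$.

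For general $p$, regard $T\psi:=\aver{(\lambda+\bv\cdot\nabla)^{-1}\psi}$ as a linear operator. Step two says $T\colon L^{2}(\bbR^d\times\bbS^{d-1})\to W^{1/2,2}(\bbR^d)$ boundedly, while $T\colon L^{p}\to L^{p}$ boundedly for every $p\in(1,\infty)$ (integration along characteristics plus Jensen for the angular average). Real interpolation between these two endpoints, keeping track of the Sobolev scale, produces $T\colon L^{p}\to W^{s,p}$ for every $s$ with $0<s<\inf(p^{-1},1-p^{-1})$ — the strict inequality and the symmetric threshold $\inf(p^{-1},1-p^{-1})=\min(1/p,1/p')$ being exactly what the interpolation exponents deliver. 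Undoing the reduction of step one then gives the stated inequality on $\Omega$.

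The main obstacle is the passage to $p\neq2$: the Plancherel identity that makes step two essentially a two-line computation is unavailable, so the fractional gain must be extracted by the interpolation bookkeeping above or, equivalently, by estimating the $L^{p}$ modulus of continuity of $\aver{\bar u}$ after a Littlewood--Paley frequency localisation and summing the pieces — in either route one must track the endpoint restriction on $s$ carefully, since it is genuine. Step one is routine but must be carried out with attention to the in- and out-flow traces so that only the three norms on the right-hand side of the claimed inequality appear.
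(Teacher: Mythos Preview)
The paper does not provide its own proof of this statement: the Averaging Lemma is merely recalled in Appendix~\ref{SEC:Appendix A} as a known result from~\cite{GoLiPeSe-JFA88}, so there is nothing to compare against. Your sketch is a reasonable outline of the classical argument and broadly follows the route of the cited reference: Fourier analysis on $\bbR^d$ for $p=2$, then real interpolation to reach the range $0<s<\min(1/p,1/p')$. The reduction from $\Omega$ to $\bbR^d$ via extension along characteristics and the control of the outflow trace by the three right-hand-side norms are the right ideas, though in a full proof those steps require more care than you indicate (in particular the trace estimate and the precise bookkeeping of the interpolation indices on the Besov/Sobolev scale).
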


The second result we recall here is Kellogg's uniqueness theory for the Schauder Fixed-Point Theorem, developed in~\cite{Kellogg-PAMS76}. The theory provides a condition under which the Schauder fixed point is unique.
\begin{theorem}[Kellogg 1976~\cite{Kellogg-PAMS76}]\label{THM:UNIQ}
	Let $\cM$ be a bounded convex open subset of a real Banach space, and $F:\overline{\cM}\to\overline{\cM}$ a compact continuous map which is continuously Fr\'echet differentiable on $\cM$. If (i) for each $m\in \cM$, $1$ is not an eigenvalue of $F'(m)$, and (ii) for each $m\in\partial\cM$, $m \neq F(m)$, then $F$ has a unique fixed point in $\cM$. 
\end{theorem}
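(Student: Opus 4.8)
The plan is to prove this classical fact by Leray--Schauder degree theory, exploiting the convexity of $\cM$ in two separate ways: to run an admissible homotopy to a constant map, and to supply connectedness of $\cM$. Write $E$ for the Banach space and $\Phi:=I-F\colon\overline\cM\to E$, a compact perturbation of the identity; by hypothesis (ii), $\Phi\ne 0$ on $\partial\cM$, so $d:=\deg(\Phi,\cM,0)$ is well defined. \textbf{Step 1 (the total degree is $1$, and existence).} I would fix $p\in\cM$ and consider the homotopy $\Phi_t:=I-(1-t)p-tF$, $t\in[0,1]$ (here $p$ denotes the constant map). If $\Phi_t(x)=0$ for some $x\in\partial\cM$ and $t\in[0,1)$, then $x=(1-t)p+tF(x)$ is a convex combination carrying positive weight on the interior point $p\in\cM$ and the remaining weight on $F(x)\in\overline\cM$; by the segment lemma for convex sets this point lies in $\operatorname{int}\cM=\cM$, contradicting $x\in\partial\cM$, while $t=1$ contradicts (ii). Hence the homotopy is admissible and $d=\deg(I-p,\cM,0)=1$; in particular $F$ has at least one fixed point in $\cM$.

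\textbf{Step 2 (the fixed-point set is finite).} For a fixed point $x^\ast\in\cM$, the derivative $F'(x^\ast)$ is compact (the Fr\'echet derivative of a compact map is a uniform limit of compact operators, hence compact), and by (i) the value $1$ is not an eigenvalue, so the Fredholm alternative makes $I-F'(x^\ast)$ an isomorphism of $E$; the inverse function theorem then shows $\Phi$ is a local homeomorphism near $x^\ast$, so $x^\ast$ is an isolated zero of $\Phi$. Since $\operatorname{Fix}(F)=\Phi^{-1}(0)$ is closed and contained in the relatively compact set $F(\overline\cM)$, it is compact; by (ii) it lies inside $\cM$; and a compact set of isolated points is finite, say $\operatorname{Fix}(F)=\{x_1,\dots,x_n\}$ with $n\ge 1$.

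\textbf{Step 3 (all local indices coincide) and conclusion.} For \emph{every} $x\in\cM$, (i) makes $I-F'(x)$ invertible, so the index $i(x):=\deg\!\big(I-F'(x),B_1(0),0\big)\in\{+1,-1\}$ is defined. The assignment $x\mapsto F'(x)$ is norm-continuous with compact values and $I-F'(x)$ stays invertible along the path-connected (because convex) set $\cM$, so homotopy invariance of the Leray--Schauder degree gives $i(x)\equiv i_0$ for a single $i_0\in\{\pm 1\}$. A standard linearization homotopy --- $\Phi(x_j+z)=(I-F'(x_j))z-o(\|z\|)$, homotoped to $(I-F'(x_j))z$ on a small sphere --- identifies the local degree of $\Phi$ at $x_j$ with $i(x_j)=i_0$. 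Then excision and additivity give $1=d=\sum_{j=1}^n i(x_j)=n\,i_0$, and since $i_0=\pm 1$ and $n\ge 1$, necessarily $n=1$ (and $i_0=1$). Thus $F$ has exactly one fixed point in $\cM$.

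The main obstacle is Step 3: one must invoke homotopy invariance of the Leray--Schauder degree carefully, both to see that $\deg(I-F'(x),B_1(0),0)$ cannot jump as $x$ runs over the connected set $\cM$ --- the only mechanism for a jump being a real eigenvalue of $F'(x)$ crossing the value $1$, which (i) forbids \emph{throughout} $\cM$, not merely at fixed points --- and to equate the index of $\Phi$ at a fixed point with this linearized index. In one dimension all of this is transparent: (i) makes $1-F'$ of constant sign, (ii) excludes the case $F'>1$ (which would force $x-F(x)$ to be decreasing yet change sign across $\partial\cM$), and then $x-F(x)$ is strictly increasing with at most one zero; the argument above is just the Banach-space rendering of exactly this. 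The remaining ingredients --- compactness of $F'(x)$, the Fredholm alternative, the inverse function theorem, and the segment lemma for convex sets --- are routine.
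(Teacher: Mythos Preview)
The paper does not prove this theorem at all: it is stated in Appendix~\ref{SEC:Appendix A} purely as a quoted result from Kellogg~\cite{Kellogg-PAMS76}, with no argument given. So there is nothing in the paper to compare against, and your proof is in fact a reconstruction of Kellogg's original degree-theoretic argument.

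Your proof is correct in outline and in substance. The three-step structure --- homotopy to a constant to get total degree $1$; isolation of fixed points via the inverse function theorem; constancy of the linearized index over the connected set $\cM$ and additivity to force $n=1$ --- is exactly Kellogg's route, and each step is sound. One small quibble: your justification that $F'(x^\ast)$ is compact (``the Fr\'echet derivative of a compact map is a uniform limit of compact operators'') is not the right mechanism. The difference quotients $t^{-1}(F(x^\ast+t\,\cdot)-F(x^\ast))$ do not in general converge in operator norm, so there is no uniform limit to speak of. The correct argument is direct: if $(h_n)$ is a unit sequence with $(F'(x^\ast)h_n)$ $\varepsilon$-separated, then for $t$ small enough the Fr\'echet estimate forces $(F(x^\ast+th_n))$ to be $(t\varepsilon/2)$-separated, contradicting compactness of $F$. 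This does not affect the validity of the rest of your argument; the conclusion that $F'(x)$ is compact is a standard fact and you use it correctly thereafter.
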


\section{Inversion in isotropic media} 
\label{SEC:Appendix B}

We analyze here the inverse problem in Section~\ref{SEC:IP Scattering} in the context of isotropic scattering. This is again done by analyzing a fixed-point iteration for solving~\eqref{EQ:ERT IP}. For simplicity, in the following we consider the boundary source $g(\bx, \bv)\equiv \overline{g}$ as a constant.

For any positive function $m(\bx)>0$, we define a map $\cC$ through the relation:
\[
	\cC(m):=\aver{u},
\]
where $u$ solves the following linear transport equation with isotropic scattering:
\begin{equation}\label{EQ:ERT FP 2}
\begin{array}{rcll}
\bv \cdot \nabla u(\bx, \bv) +({H}/{m} + \sigma_s) u(\bx, \bv) &=& \sigma_s(\bx)  \dint_{\bbS^{d-1}} u(\bx, \bv) d\bv, &\mbox{in}\ X\,\\
u(\bx, \bv) &= & \overline{g}, &\mbox{on}\  \Gamma_{-}\,.
\end{array}
\end{equation}
The map $\cC$ is monotone increasing and is bounded from above by $\overline{g}$ in the space of positive functions, under the assumptions in $(\cA)$. We will show that $\cC$ admits a unique fixed point in appropriate sense.

Let $\eta$ be defined as in Section~\ref{SEC:IP Scattering} and satisfy the assumption $(\cA'')$, that is, $\cC(\eta) \ge \eta$.
We define the function space
\begin{equation}\label{EQ:M Inv}
	\cM=\{ m\in L^2(\Omega) \mid \eta \le m \le \overline{g}\quad a.e.\}\,.
\end{equation}
Then $\cC$ is monotone increasing and $\cC(\cM)\subset \cM$. Moreover,  $\cC$ is compact and continuous on $\cM$ in $L^{2}(\Omega)$ topology. The existence of solution on $\cC$ then follows from the Schauder Fixed-Point Theorem. 

To show the uniqueness of the fixed point of $\cC$, we first observe that the equation~\eqref{EQ:ERT FP 2} is equivalent to the following integral equation:
\begin{equation}
\aver{u}(\bx) = \cJ_m \overline{g} + \cK_{m} (\sigma_s\aver{u}),
\end{equation}
where the integral operators $\cJ_m:L^p(\Gamma_{-})\to L^p(\Omega)$ and $\cK_m:L^p(\Omega)\to L^p(\Omega)$, $1\le p\le \infty$, are defined as follows:
\begin{equation}
\begin{aligned}
\cJ_m \overline{g} &= \overline{g}\int_{\bbS^{d-1}}  E_{m}(\bx, \tau_{-}(\bx, \bv), \bv) d\bv\,,\\
\cK_m f &= \int_{\bbS^{d-1}} \int_{0}^{\tau_{-}(\bx, \bv)} E_{m}(\bx, l, \bv) f(\bx - l\bv) dl d\bv\,,
\end{aligned}
\end{equation}
with the path integral operator $E_m$ given as 
$$E_m(\bx, l, \bv) = \exp\left[-\displaystyle\int_0^{l} (\frac{H}{m}+\sigma_s)(\bx - s\bv) ds\right].$$

We first show the result on the fixed point starting from $\overline{g}$.
\begin{lemma}\label{LEM: SCATTER H}
    Let $h := \dlim_{n\to\infty} \cC^n (\overline{g})$. For any $\psi\in L^{\infty}(\Omega)$ such that $\psi \ge 0$, the corresponding integral operators $\cJ_h$ and $\cK_h$ satisfy
    \begin{equation}
    J_h\overline{g}\ge \frac{h-\mu_h}{\overline{g}-\mu_h} \overline{g} ,\quad \cK_h \psi \le \frac{\overline{g} - h}{\overline{g} - \mu_h}\sup\left[\frac{\psi}{\frac{H}{h}+\sigma_s}\right] \mbox{ with } \mu_h = \sup\left[\frac{\sigma_s h}{\frac{H}{h}+\sigma_s}\right]\,.
    \end{equation} 
\end{lemma}
\begin{proof}
    We first observe that:
    \begin{multline}\label{EQ: KH}
    \cK_h \psi = \int_{\bbS^{d-1}} \int_{0}^{\tau_{-}(\bx, \bv)} E_h(\bx, l, \bv) \psi(\bx - l\bv) dl d\bv  \\
    \le \int_{\bbS^{d-1}} \left(1 - E_h(\bx, \tau_{-}(\bx, \bv), \bv)\right)\sup \left[ \frac{\psi}{(\frac{H}{h}+\sigma_s)} \right] d\bv
    \le \left(1 - \frac{1}{\overline{g}}\cJ_{h}\overline{g}\right) \sup \left[ \frac{\psi}{(\frac{H}{h}+\sigma_s)} \right].
    \end{multline}
    The function $h$ solves the integral equation $h = \cJ_h \overline{g} + \cK_h(\sigma_s h)$. Therefore,
    \begin{equation}
    \begin{aligned}
    h \le \cJ_h\overline{g} + \left(1 - \frac{1}{\overline{g}}\cJ_{h}\overline{g}\right) \sup \left[ \frac{\sigma_s h}{(\frac{H}{h}+\sigma_s)} \right],
    \end{aligned}
    \end{equation}
    which means
    \begin{equation}
    \left(1 - \frac{1}{\overline{g}} \sup \left[ \frac{\sigma_s h}{(\frac{H}{h}+\sigma_s)} \right] \right)\cJ_h \overline{g} \ge h - \sup \left[ \frac{\sigma_s h}{(\frac{H}{h}+\sigma_s)} \right].
    \end{equation}
    The proof is completed by bringing the above inequality into~\eqref{EQ: KH}.
\end{proof}

\begin{lemma}\label{LEM: SCATT EST}
	Assume that $\ell_{\Omega} = \mbox{diam}(\Omega)\le 1$. Let $h := \dlim_{n\to\infty}\cC^n(\overline{g})$ and $f$ be an arbitrary element of $\cM$. We have that
    \begin{equation}
    |\cC(h) - \cC(f)| \le \gamma \left[  \kappa\mu_f[1 - (1-\ell_{\Omega})\frac{h-\mu_h}{\overline{g}-\mu_h}] + \frac{\overline{g}-h}{\overline{g}-\mu_h}\overline{g}\right],
    \end{equation}
    where $\gamma = \sup\left|\frac{h-f}{h\wedge f}\right|$, $\kappa = \sup\frac{\frac{H}{h\vee f}}{\frac{H}{h\vee f}+\sigma_s}$, $\mu_f = \sup\frac{\sigma_s f}{\frac{H}{h\vee f} + \sigma_s}$, and $\mu_h = \sup\frac{\sigma_s h}{\frac{H}{h}+\sigma_s}$ with the notations $h\wedge f := \min(h, f)$ and $h\vee f := \max(h,f)$. 
\end{lemma}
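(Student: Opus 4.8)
The plan is to pass from $\cC$ to the underlying transport equation, form the difference of the two transport solutions, and read the estimate off its characteristics (path–integral) representation, using the sub–stochastic inequalities of Lemma~\ref{LEM: SCATTER H} to convert exponential weights into the ratios $\tfrac{h-\mu_h}{\overline{g}-\mu_h}$ and $\tfrac{\overline{g}-h}{\overline{g}-\mu_h}$ that appear in the claim, and using the hypothesis $\ell_\Omega\le 1$ at the single place where a bare path integral of length $\le\ell_\Omega$ must be controlled.

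Set $v:=\cC(f)$ and let $u_h,u_f$ denote the solutions of~\eqref{EQ:ERT FP 2} with $m=h$ and $m=f$, so that $\aver{u_h}=\cC(h)=h$ (since $h$ is the fixed point, $\cC$ being continuous) and $\aver{u_f}=v$, with $\eta\le v\le\overline{g}$ because $f\in\cM$ and $\cC(\cM)\subset\cM$. First I would split $u_f=u_{f,b}+u_{f,s}$ into its ballistic and scattered parts, $u_{f,b}(\bx,\bv)=\overline{g}\,E_f(\bx,\tau_{-}(\bx,\bv),\bv)$ and $u_{f,s}(\bx,\bv)=\int_0^{\tau_{-}(\bx,\bv)}E_f(\bx,l,\bv)\,(\sigma_s v)(\bx-l\bv)\,dl$, keeping the $L^\infty$ bound on $u_{f,s}$ obtained exactly as in the proof of Lemma~\ref{LEM: SCATTER H}. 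Subtracting the two copies of~\eqref{EQ:ERT FP 2} shows that $w:=u_h-u_f$ solves, with zero incoming data,
\[
\bv\cdot\nabla w+\Bigl(\tfrac{H}{h}+\sigma_s\Bigr)w=\sigma_s\aver{w}+S,\qquad S=H\,\tfrac{h-f}{hf}\,u_f,
\]
and, using $hf=(h\wedge f)(h\vee f)$ together with $|h-f|/(h\wedge f)\le\gamma$, we get the pointwise bound $|S|\le\gamma\,\tfrac{H}{h\vee f}\,(u_{f,b}+u_{f,s})$. Representing $w$ along the characteristics of the operator on the left with weight $E_h$ and averaging in $\bv$ gives $\aver{w}=\cK_h(\sigma_s\aver{w})+\aver{w_S}$ with $w_S(\bx,\bv)=\int_0^{\tau_{-}(\bx,\bv)}E_h(\bx,l,\bv)\,S(\bx-l\bv)\,dl$; the self–scattering term $\cK_h(\sigma_s\aver{w})$ is then disposed of with the inequality $\cK_h\psi\le\tfrac{\overline{g}-h}{\overline{g}-\mu_h}\sup[\psi/(\tfrac{H}{h}+\sigma_s)]$ of Lemma~\ref{LEM: SCATTER H}, leaving $|\cC(h)-\cC(f)|=|\aver{w}|$ controlled by $|\aver{w_S}|$.

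It then remains to estimate $|\aver{w_S}|$ from $|S|\le\gamma\,\tfrac{H}{h\vee f}\,u_{f,b}+\gamma\,\tfrac{H}{h\vee f}\,u_{f,s}$. In the $u_{f,b}$–contribution one majorizes the exponential weights $E_h$ and $E_f$ occurring there by the common weight with coefficient $\tfrac{H}{h\vee f}+\sigma_s$ (legitimate since $h,f\le h\vee f$); the ratio $\tfrac{H/(h\vee f)}{H/(h\vee f)+\sigma_s}$ then furnishes $\kappa$, the residual $\sigma_s$–weight against $u_{f,b}$ furnishes $\mu_f$, and the remaining bare path integral, of length $\le\ell_\Omega$, combined with the lower bound on $\cJ_h\overline{g}$ from Lemma~\ref{LEM: SCATTER H}, furnishes the factor $\bigl(1-(1-\ell_\Omega)\tfrac{h-\mu_h}{\overline{g}-\mu_h}\bigr)$, which indeed collapses to $1$ when $\ell_\Omega=1$. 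The $u_{f,s}$–contribution, estimated in the same way but using the recorded bound on $u_{f,s}$, furnishes $\tfrac{\overline{g}-h}{\overline{g}-\mu_h}\,\overline{g}$. Summing the two pieces and factoring out $\gamma$ yields the stated inequality.

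The hard part is the bookkeeping in this last step: because $h$ and $f$ need not be ordered one must carry $h\wedge f$ and $h\vee f$ through every exponential weight and be careful about which coefficient, $\tfrac{H}{h}+\sigma_s$ or $\tfrac{H}{h\vee f}+\sigma_s$, multiplies each path integral; and one must verify that the self–consistent scattering contribution $\cK_h(\sigma_s\aver{w})$ really is absorbed into $\tfrac{\overline{g}-h}{\overline{g}-\mu_h}\overline{g}$ rather than producing an extra $(1-c)^{-1}$ factor. The role of $\ell_\Omega\le1$ is precisely to bound the bare path integral $\int_0^{\tau_{-}(\bx,\bv)}E_h(\bx,l,\bv)\,dl\le\tau_{-}(\bx,\bv)\le\ell_\Omega\le1$ at the step that creates the $1-(1-\ell_\Omega)\tfrac{h-\mu_h}{\overline{g}-\mu_h}$ factor.
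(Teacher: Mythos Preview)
Your approach differs from the paper's in precisely the place you flag as ``hard,'' and that difficulty is a genuine gap. The paper does \emph{not} form the PDE for $w=u_h-u_f$; it works directly with the angularly averaged integral equations $h=\cJ_h\overline g+\cK_h(\sigma_s h)$ and $\cC(f)=\cJ_f\overline g+\cK_f(\sigma_s\,\cC(f))$ and splits
\[
|\cC(h)-\cC(f)|\le \underbrace{|\cJ_h\overline g-\cJ_f\overline g|}_{A_1}
+\underbrace{|\cK_h(\sigma_s(h-f))|}_{A_2}
+\underbrace{|(\cK_h-\cK_f)(\sigma_s f)|}_{A_3}.
\]
The crucial point is that $A_2$ carries the \emph{input} difference $h-f$, which is controlled by $\gamma$ by definition, so there is no self-consistent loop to close. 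Your term $\cK_h(\sigma_s\aver{w})$ instead carries the \emph{output} difference $h-\cC(f)$; absorbing it via Lemma~\ref{LEM: SCATTER H} yields $|\aver{w}|\le c\sup|\aver{w}|+|\aver{w_S}|$ with $c<1$, and closing that produces a $(1-c)^{-1}$ factor you cannot remove to reach the stated pointwise bound. So your suspicion is exactly right, and your sketch does not resolve it.

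Your attribution of the two pieces of the final bound is also reversed relative to the paper. The paper combines $A_1$ and $A_2$ into a single integral of $E_{h\vee f}$ against $\tfrac{\overline g H}{h\vee f}+\sigma_s(h\wedge f)$, bounds it by $\gamma\,\overline g\bigl(1-\tfrac{1}{\overline g}\cJ_{h\vee f}\overline g\bigr)\le\gamma\,\overline g\bigl(1-\tfrac{1}{\overline g}\cJ_h\overline g\bigr)$, and then uses $\cJ_h\overline g\ge\tfrac{h-\mu_h}{\overline g-\mu_h}\,\overline g$ from Lemma~\ref{LEM: SCATTER H}; this is the source of the $\tfrac{\overline g-h}{\overline g-\mu_h}\,\overline g$ term. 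The term $\kappa\mu_f\bigl[1-(1-\ell_\Omega)\tfrac{h-\mu_h}{\overline g-\mu_h}\bigr]$ comes entirely from $A_3$: after extracting $\gamma\kappa\mu_f$, one is left with $\int_{\bbS^{d-1}}\int_0^{\tau_-}E_{h\vee f}\bigl(\int_0^l\sigma_t\bigr)\sigma_t\,dl\,d\bv$, and an integration in $l$ together with $\tau_-\le\ell_\Omega\le1$ produces $1-(1-\ell_\Omega)\int E_{h\vee f}$, which is then bounded using $E_{h\vee f}\ge E_h$ and Lemma~\ref{LEM: SCATTER H} again. Splitting the source via $u_f=u_{f,b}+u_{f,s}$ as you propose does not line up with either of these two computations.
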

\begin{proof}
    We need to bound $|\cC(h) - \cC(f)|$. We first observe that
    \begin{equation*}
    \begin{aligned} 
    |\cC(h) - \cC(f)| &\le |\cJ_h \overline{g} - \cJ_f \overline{g}| + |\cK_h(\sigma_s(h-f))| + |(\cK_h - \cK_f)(\sigma_s f)|\\
    &\equiv A_1 + A_2 + A_3\,.
    \end{aligned}
    \end{equation*}
   Using the fact that $|e^{-x} - e^{-y}|\le e^{-\min(x,y)}|x-y|$, $\forall x, y\in \bbR$, we have
   \begin{equation*}
   | E_h(\bx, l, \bv) - E_f(\bx, l, \bv) |\le E_{h\vee f}(\bx, l, \bv)  \left|\int_{0}^l \frac{H(h-f)}{hf}(\bx - s\bv) ds\right|.
   \end{equation*}
   Note that $hf = (h\vee f)(h\wedge f)$, we obtain the estimates for $A_1$ and $A_2$:
    \begin{equation*}
    \begin{aligned}
    A_1 &= |\cJ_h \overline{g} - \cJ_f \overline{g}|\\
    &\le \overline{g}\int_{\bbS^{d-1}} E_{h\vee f}(\bx, \tau_{-}(\bx, \bv), \bv)\left|\int_{0}^{\tau_{-}(\bx, \bv)}\frac{H(h-f)}{hf}(\bx - s\bv)ds\right| d\bv \\
    &\le \overline{g}\int_{\bbS^{d-1}} \left(\int_{0}^{\tau_{-}(\bx, \bv)}E_{h\vee f}(\bx, s, \bv)\left|\frac{H(h-f)}{hf}(\bx - s\bv)\right|ds\right) d\bv\\
    &\le \overline{g} \sup\left|\frac{h-f}{h\wedge f}\right| \int_{\bbS^{d-1}} \left(\int_{0}^{\tau_{-}(\bx, \bv)}E_{h\vee f}(\bx, s, \bv)\left|\frac{H}{h\vee f}(\bx - s\bv)\right|ds\right) d\bv\,,\\
    A_2 &= |\cK_h(\sigma_s(h - f) )| \\
    &\le\cK_h(\sigma_s (h\wedge f) \sup\left|\frac{h-f}{h\wedge f}\right|)\\
    &\le \cK_{h\vee f}(\sigma_s (h\wedge f) \sup\left|\frac{h-f}{h\wedge f}\right|)\\
    &\le  \sup\left|\frac{h-f}{h\wedge f}\right|\int_{\bbS^{d-1}} \left(\int_{0}^{\tau_{-}(\bx, \bv)}E_{h\vee f}(\bx, s, \bv)\left|\sigma_s (h\wedge f)(\bx - s\bv)\right|ds\right) d\bv\,,
    \end{aligned} 
    \end{equation*}
     The above estimates imply that
    \begin{equation*}
    \begin{aligned}
    A_1 + A_2 &\le  \sup\left|\frac{h-f}{h\wedge f}\right|\int_{\bbS^{d-1}} \left(\int_{0}^{\tau_{-}(\bx, \bv)}E_{h\vee f}(\bx, s, \bv)\left|\frac{\overline{g}H}{h\vee f}+\sigma_s (h\wedge f)\right|(\bx - s\bv)ds\right) d\bv\\
    &\le  \overline{g} \sup\left|\frac{h-f}{h\wedge f}\right|\left(1 - E_{h\vee f}(\bx ,\tau_{-}(\bx, \bv), \bv)\right)\sup\frac{\frac{H}{h\vee f}+\sigma_s \frac{h\wedge f}{\overline{g}}}{\frac{H}{h\vee f} + \sigma_s} \\
    &\le \overline{g} \sup\left|\frac{h-f}{h\wedge f}\right|\left(1 - E_{h\vee f}(\bx ,\tau_{-}(\bx, \bv), \bv)\right)\,.
    \end{aligned}
    \end{equation*}
    To estimate $A_3$, we observe that:
    \begin{equation*}
    \begin{aligned}
    A_3 &\le (\cK_h - \cK_f)(\sigma_s f) \\
    &\le \int_{\bbS^{d-1}} \int_0^{\tau_{-}(\bx, \bv)} E_{h\vee f}(\bx, l, \bv) \left|\int_0^l \frac{H(h-f)}{hf}(\bx - s\bv) ds \right|\sigma_s f(\bx - l\bv) dl d\bv \\
    &\le \gamma \int_{\bbS^{d-1}} \int_0^{\tau_{-}(\bx, \bv)} E_{h\vee f}(\bx, l, \bv) \left(\int_0^l \frac{H}{h\vee f}(\bx - s\bv) ds \right)\sigma_s f(\bx - l\bv) dl d\bv\\
    &\le \gamma \kappa \int_{\bbS^{d-1}} \int_0^{\tau_{-}(\bx, \bv)} E_{h\vee f}(\bx, l, \bv) \left(\int_0^l (\frac{H}{h\vee f}+\sigma_s)(\bx - s\bv) ds \right)\sigma_s f(\bx - l\bv) dl d\bv \\
    &\le \gamma \kappa \mu_f\int_{\bbS^{d-1}} \int_0^{\tau_{-}(\bx, \bv)} E_{h\vee f}(\bx, l, \bv) \left(\int_0^l (\frac{H}{h\vee f}+\sigma_s)(\bx - s\bv) ds \right)(\frac{H}{h\vee f}+\sigma_s)(\bx - l\bv)dl d\bv \\
    &\le \gamma \kappa \mu_f \left[1 - (1-\ell_{\Omega}) \int_{\bbS^{d-1}} E_{h\vee f}(\bx, \tau_{-}(\bx, \bv), \bv) d\bv   \right]\,.
    \end{aligned}
    \end{equation*}
    We can then use the fact that $E_{h\vee f}(\bx, l, \bv)\ge E_h(\bx, l,\bv)$ and Lemma~\ref{LEM: SCATTER H} to obtain that,
    \begin{equation*}
    \begin{aligned}
    & |\cC(h) - \cC(f)| \\
    &\le \gamma \left[ \overline{g} - \overline{g}\int_{\bbS^{d-1}} E_{h\vee f}(\bx, \tau_{-}(\bx, \bv), \bv) d\bv + \kappa \mu_f[1 - (1-\ell_{\Omega})\int_{\bbS^{d-1}} E_{h\vee f} (\bx, \tau_{-}(\bx, \bv), \bv)d\bv ] \right]\\
    &\le \gamma \left[ \overline{g} - \overline{g}\int_{\bbS^{d-1}} E_{h}(\bx, \tau_{-}(\bx, \bv), \bv) d\bv + \kappa \mu_f[1 - (1-\ell_{\Omega})\int_{\bbS^{d-1}} E_{h} (\bx, \tau_{-}(\bx, \bv), \bv)d\bv ] \right] \\
    &\le \gamma \left[  \kappa\mu_f[1 - (1-\ell_{\Omega})\frac{h-\mu_h}{\overline{g}-\mu_h}] + \frac{\overline{g}-h}{\overline{g}-\mu_h}\overline{g}\right]\,.
    \end{aligned}
    \end{equation*}
    This completes the proof.
\end{proof}

We are now ready to establish the uniqueness result. 
We define
\[
	\alpha := \sup\frac{{H}/{\eta}}{{H}/{\eta}+\sigma_s} \quad\mbox{and}\quad \beta := \sup\frac{\sigma_s}{{H}/{\overline{g}}+\sigma_s}\,.
\]
\begin{theorem}\label{THM: SCAT UNIQ}
   Assume that $\ell_{\Omega} := \mbox{diam}(\Omega) \le 1$. Let $\psi$ be defined as
   \begin{equation}
   	\psi = \frac{1 + \alpha \beta - \alpha\ell_{\Omega}\beta^2}{2 - [1-(1-\ell_{\Omega})\alpha  ]\beta}\overline{g}. 
   \end{equation}
	When $\psi\le \eta$, the transport equation~\eqref{EQ:ERT IP} admits a unique solution.
\end{theorem}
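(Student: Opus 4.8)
The plan is to combine the monotone–iteration construction of a fixed point of $\cC$ with the quantitative estimate of Lemma~\ref{LEM: SCATT EST} to rule out a second fixed point. Existence of a fixed point of $\cC$ in $\cM$ is already granted by the Schauder Fixed-Point Theorem as noted above, and any fixed point $m=\cC(m)=\aver{u}$ turns~\eqref{EQ:ERT FP 2} into~\eqref{EQ:ERT IP}, so fixed points of $\cC$ in $\cM$ correspond exactly to solutions of~\eqref{EQ:ERT IP} with angular average in $\cM$. For uniqueness I single out the maximal fixed point: since $\cC$ is monotone increasing, $\cC(\cM)\subseteq\cM$, and $\overline{g}$ is the largest element of $\cM$, the sequence $\cC^n(\overline{g})$ is nonincreasing and bounded below by $\eta$, hence converges a.e.\ to some $h\in\cM$ with $\cC(h)=h$ (the argument is as in Corollary~\ref{COR:Extreme}); moreover every fixed point $f\in\cM$ satisfies $f=\cC^n(f)\le\cC^n(\overline{g})\to h$, so $f\le h$. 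It therefore suffices to show that an arbitrary fixed point $f\in\cM$ equals $h$.

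I would apply Lemma~\ref{LEM: SCATT EST} with this $h$ and this $f$; since $\cC(h)=h$ and $\cC(f)=f$ its left-hand side is $|h-f|$, so
\[
|h(\bx)-f(\bx)|\le\gamma\Big[\kappa\mu_f\big(1-(1-\ell_{\Omega})\tfrac{h(\bx)-\mu_h}{\overline{g}-\mu_h}\big)+\tfrac{(\overline{g}-h(\bx))\overline{g}}{\overline{g}-\mu_h}\Big]=:\gamma\,B(\bx),
\]
with $\gamma=\sup|\tfrac{h-f}{h\wedge f}|$. The first substantive step is to majorize the constants: from $\eta\le f,h\le\overline{g}$ and the monotonicity of $x\mapsto x/(x+\sigma_s)$ one gets $\kappa\le\alpha$, while $f,h\le\overline{g}$ give $\mu_f,\mu_h\le\beta\overline{g}$, and $\beta<1$ gives $\overline{g}-\mu_h\ge\overline{g}(1-\beta)>0$ (so also the denominator $2-\beta[1-(1-\ell_{\Omega})\alpha]$ of $\psi$ is positive). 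A short algebraic identity shows $\beta\overline{g}\le\psi$: clearing the positive denominator, this inequality is equivalent to $(1-\beta)\big[(1-\beta)+\alpha\beta\big]\ge0$. Hence under the hypothesis $\psi\le\eta$ we have $\mu_h\le\beta\overline{g}\le\eta\le h(\bx)$, so every factor occurring in $B(\bx)$ is nonnegative.

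The second step is to bound $\sup_{\bx}B(\bx)$. Using $\ell_{\Omega}\le1$ one checks that $B(\bx)$ is nonincreasing in $h(\bx)$ and nondecreasing in $\mu_h$; substituting the bounds $h(\bx)\ge\eta$, then $\mu_h\le\beta\overline{g}$, then $\kappa\mu_f\le\alpha\beta\overline{g}$, and collecting over the common denominator $1-\beta$, gives
\[
\sup_{\bx}B(\bx)\le\frac{(1+\alpha\beta-\alpha\ell_{\Omega}\beta^2)\,\overline{g}-\big(1+\alpha\beta(1-\ell_{\Omega})\big)\,\eta}{1-\beta},
\]
and an elementary rearrangement shows this majorant is $\le\eta$ exactly when $\eta\ge\psi$. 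Thus $B(\bx)\le\eta\le(h\wedge f)(\bx)$ a.e., and the pointwise inequality $|h-f|(\bx)\le\gamma\,(h\wedge f)(\bx)$, after dividing by $(h\wedge f)(\bx)$ and taking the supremum, forces $\gamma=0$, i.e.\ $h\equiv f$; uniqueness follows.

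The analytic content is already packaged in Lemmas~\ref{LEM: SCATTER H} and~\ref{LEM: SCATT EST}, so the work here is essentially bookkeeping: establishing the two monotonicity properties of $B$ and verifying that the linear-in-$\eta$ majorant of $\sup_{\bx}B$ crosses the level $\eta$ precisely at the threshold $\psi$. The one point that needs a little care — and which I expect to be the only genuine obstacle — is the borderline case $\psi=\eta$, where the contraction estimate degenerates to the trivial inequality $\gamma\le\gamma$; there one argues separately, e.g.\ by observing that the majorizations $\kappa\le\alpha$ and $\mu_f,\mu_h\le\beta\overline{g}$ are strict unless $h\equiv f\equiv\overline{g}$ (in which case $h=f$ trivially), or by replacing $\eta$ with a slightly smaller admissible lower bound.
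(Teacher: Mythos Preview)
Your approach is essentially the paper's: extract the maximal fixed point $h=\lim \cC^n(\overline g)$, apply Lemma~\ref{LEM: SCATT EST}, majorize $\kappa\le\alpha$ and $\mu_f,\mu_h\le\beta\overline g$, and reduce the sufficient condition to $\psi\le\eta$. The paper organizes the last step as the convex-combination inequality $t\kappa\ell_\Omega\mu_h+(1-t)\overline g\le tf+(1-t)h$ whose left side is $\le\psi$ and right side is $\ge\eta$, while you bound $B(\bx)$ directly; these are algebraically equivalent and your computation is correct.

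There is one genuine slip. From $B(\bx)\le\eta\le(h\wedge f)(\bx)$ you only obtain $|h-f|/(h\wedge f)\le\gamma$, i.e.\ $\gamma\le\gamma$, which is vacuous; it does not ``force $\gamma=0$''. You notice this, but your diagnosis that it is only the borderline $\psi=\eta$ that fails is right only because when $\psi<\eta$ your majorant is a \emph{constant} $M<\eta$, giving $\sup(B/(h\wedge f))\le M/\eta<1$ and hence $\gamma\le(M/\eta)\gamma$; you should say this explicitly rather than the incorrect line as written. For the borderline case, your proposed fix is off: $\kappa=\alpha$ occurs when $h\vee f$ touches $\eta$ at the sup point, and $\mu_h=\beta\overline g$ when $h$ touches $\overline g$ there, so ``strict unless $h\equiv f\equiv\overline g$'' is not the right dichotomy. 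The paper instead traces equality back through the $A_1$ estimate in the proof of Lemma~\ref{LEM: SCATT EST}: if the chain of inequalities is saturated at a point $\bz$, then $\int_0^{\tau_-(\bz,\bv)}\tfrac{H(h-f)}{hf}(\bz-s\bv)\,ds=0$ for every $\bv$, forcing $h\equiv f$. That is the argument you should use to close the equality case.
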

\begin{proof}
    Let $h := \dlim_{n\to\infty} \cC^n (\overline{g})$ and $f := \dlim_{n\to\infty} \cC^n (\eta)$. It is clear that $\overline{g}\ge h\ge f\ge \eta$, $h\vee f = f$ and $h\wedge f = f$. Let $\gamma$, $\kappa$, $\mu_f$ and $\mu_h$ be given as in Lemma~\ref{LEM: SCATT EST}. Then we have
    \begin{equation}\label{EQ: ALL EST}
    \begin{aligned}
    \frac{h-f}{f}\le \gamma\left( \frac{ \overline{g}}{f}\frac{\overline{g} - h}{\overline{g} - \mu_h} + \frac{1}{f}\kappa  \mu_f [1 - (1-\ell_{\Omega})\frac{h - \mu_h}{\overline{g}-\mu_h}]\right).
    \end{aligned}
    \end{equation}
    To obtain the uniqueness, it is sufficient to have
    \begin{equation}\label{EQ: ALL EST2}
    \frac{ \overline{g}}{f}\frac{\overline{g} - h}{\overline{g} - \mu_h} + \frac{1}{f}\kappa  \mu_f [1 - (1-\ell_{\Omega})\frac{h - \mu_h}{\overline{g}-\mu_h}] \le  1\,.
    \end{equation}
    The equal sign case is safely included. This is because if the equal sign holds at some point $\bz\in\Omega$, then we need, from the estimate in $A_1$, that
    \begin{equation}
    \int_0^{\tau_{-}(\bz, \bv)}\frac{H(h-f)}{hf}(\bz - s\bv)ds = 0\,,
    \end{equation}
for all $\bv\in\bbS^{d-1}$. This means that $h - f\equiv 0$. 

The inequality~\eqref{EQ: ALL EST2} is equivalent to:
    \begin{equation}\label{EQ: KEY}
    t \kappa\ell_{\Omega}\mu_h + (1-t)\overline{g} \le tf + (1-t)h\,,
    \end{equation}
	with $t =  \frac{(\overline{g}-\mu_h)}{2\overline{g} - \mu_h + \kappa\mu_f(1-\ell_{\Omega})}\in (0,1)$. Using the definitions of the parameters, we have that $\mu_f\le \mu_h \le \beta\overline{g}$. This allows us to conclude that the left-hand-side of~\eqref{EQ: KEY} is bounded by 
\begin{equation}
     t \kappa\ell_{\Omega}\mu_h + (1-t)\overline{g}  \le\frac{1 + \kappa \beta - \kappa \ell_{\Omega}\beta^2}{2 - [1-(1-\ell_{\Omega})\kappa ]\beta}\overline{g}\le  \psi\,,
\end{equation}
since $\alpha = \sup\frac{{H}/{\eta}}{{H}/{\eta}+\sigma_s}\ge \kappa$. Now we can use the assumptions $\psi\le \eta$ and the facts that $\eta\le f\le h$ to deduce~\eqref{EQ: KEY}.
\end{proof}

The next theorem gives the stability of the solution $u$ with respect to changes in $H$. 
\begin{theorem}\label{THM:STAB}
Let $\wt u$ and $\wh{u}$ be the unique solutions to ~\eqref{EQ:ERT IP} with internal data $\wt H$ and $\wh{H}$ respectively but the same source function $g \equiv \overline{g}$. Assume that $\ell_{\Omega} := \mbox{diam}(\Omega) \le 1$ and there exists a constant $0\le r < 1$, such that
\begin{equation*}
\psi := \frac{1 + \alpha \beta - \alpha\ell_{\Omega}\beta^2}{1 + r(1-\beta)  + (1-\ell_{\Omega})\alpha\beta}\overline{g} \le \aver{u}, \aver{\hat u}.
\end{equation*}
Then there exists a constant $c > 0$ that
\begin{equation}\label{EQ:Sol Stab Bound}
\left\| \aver{\wt u} - \aver{\wh{u}}  \right\|_{L^{\infty}(\Omega)} \le c \|{\wt H - \wh{H}}\|_{L^{\infty}(\Omega)}.
\end{equation}
\end{theorem}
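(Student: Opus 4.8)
The plan is to run the same fixed‑point machinery used to prove uniqueness in Theorem~\ref{THM: SCAT UNIQ}, but now comparing the fixed‑point identities for the \emph{two} data functions $\wt H$ and $\wh H$ and carrying $\wt H-\wh H$ as an additional forcing term. Recall that the solution $u$ of~\eqref{EQ:ERT IP} with datum $H$ is characterized by $\aver{u}$ being a fixed point of the map $\cC$ built from~\eqref{EQ:ERT FP 2} with $m=\aver{u}$, i.e.
\[
	\aver{u} = \cJ_{\aver{u}}\,\overline{g} + \cK_{\aver{u}}(\sigma_s\aver{u}),
\]
where $\cJ_m,\cK_m$ carry the path weight $E_m(\bx,l,\bv)=\exp[-\int_0^l(H/m+\sigma_s)(\bx-s\bv)\,ds]$. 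First I would write this identity for $v:=\aver{\wt u}$ (with datum $\wt H$) and for $w:=\aver{\wh u}$ (with datum $\wh H$) and subtract; since $\cC$ maps into the set of functions $\le\overline{g}$ and, by hypothesis, $v,w\ge\psi>0$, the quantity $\sup_\Omega|v-w|$ is a priori finite, which legitimizes the absorption step at the end.

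The key algebraic observation is the splitting
\[
	\Big|\frac{\wt H}{v}-\frac{\wh H}{w}\Big|\ \le\ \frac{\wt H\,|v-w|}{v\,w}\ +\ \frac{|\wt H-\wh H|}{w},
\]
which decomposes the difference of the exponents into an ``$m$‑part'' proportional to $|v-w|$ and an ``$H$‑part'' proportional to $|\wt H-\wh H|$. Applying $|e^{-x}-e^{-y}|\le e^{-\min(x,y)}|x-y|$ exactly as in the proof of Lemma~\ref{LEM: SCATT EST}, together with the analogous bound for the term $\cK_{v}(\sigma_s(v-w))$ coming from the difference in the source $\sigma_s\aver{u}$, the $m$‑part is controlled, after the same chain of path‑integral estimates that produced $A_1,A_2,A_3$, by $\gamma\big[\kappa\mu_f(1-(1-\ell_\Omega)\tfrac{h-\mu_h}{\overline{g}-\mu_h})+\tfrac{\overline{g}-h}{\overline{g}-\mu_h}\overline{g}\big]$, with $\gamma=\sup|v-w|/\min(v,w)$ and the roles of $h,f$ now played by $\max(v,w)$ and $\min(v,w)$. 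The $H$‑part is a routine Lipschitz estimate: since $\ell_\Omega\le1$ and $w\ge\psi$, one has $\int_0^l|\wt H-\wh H|/w\le \psi^{-1}\|\wt H-\wh H\|_{L^\infty(\Omega)}$ along every ray, so this contributes at most $c'\|\wt H-\wh H\|_{L^\infty(\Omega)}$ (one may also simply invoke Lipschitz dependence of the linear transport solution on its absorption coefficient for this piece).

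Combining the two contributions gives, pointwise on $X$,
\[
	|v-w|\ \le\ \frac{\sup|v-w|}{\psi}\,\Xi\ +\ c'\,\|\wt H-\wh H\|_{L^\infty(\Omega)},
\]
where $\Xi$ is precisely the bracket estimated in Lemma~\ref{LEM: SCATT EST} and Theorem~\ref{THM: SCAT UNIQ}. The purpose of the refined threshold here — the denominator $1+r(1-\beta)+(1-\ell_\Omega)\alpha\beta$ with $r<1$, in place of the borderline $1+(1-\beta)+(1-\ell_\Omega)\alpha\beta$ of Theorem~\ref{THM: SCAT UNIQ} — is that the hypothesis $\psi\le v,w$ now yields $\Xi/\psi\le 1-\delta$ for an explicit $\delta=\delta(r,\alpha,\beta,\ell_\Omega)>0$, whereas the uniqueness threshold only gave $\le 1$. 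Taking the supremum over $X$ and absorbing the contraction term gives $\sup_X|v-w|\le (c'/\delta)\|\wt H-\wh H\|_{L^\infty(\Omega)}$, which is~\eqref{EQ:Sol Stab Bound} with $c=c'/\delta$.

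I expect the main obstacle to be the bookkeeping that produces the strict gap $\delta$: one must carry the $A_1,A_2,A_3$ estimates of Lemma~\ref{LEM: SCATT EST} while keeping the extra $H$‑forcing visible throughout, and then verify that the reduction of $\Xi/\psi\le 1-\delta$ to an inequality of the shape~\eqref{EQ: KEY} still closes \emph{with room to spare}, the slack being exactly the factor $r$. A secondary technical point is using the hypothesis $\psi\le\aver{\wt u},\aver{\wh u}$ consistently — it enters both in bounding $\gamma$ and in the $H$‑part — and in confirming that, as in Theorem~\ref{THM: SCAT UNIQ}, the underlying fixed points are well defined so that ``the unique solutions'' in the statement is justified.
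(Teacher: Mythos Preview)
Your proposal is correct and follows essentially the same route as the paper: split the difference of fixed points into a contraction piece governed by Lemma~\ref{LEM: SCATT EST} (yielding the factor $r<1$ thanks to the strengthened threshold on $\psi$) and a Lipschitz forcing piece in $\wt H-\wh H$, then absorb. The paper organizes the split slightly more cleanly via the triangle inequality $\widetilde{\cC}(h)-\widehat{\cC}(f)=[\widetilde{\cC}(h)-\widetilde{\cC}(f)]+[\widetilde{\cC}(f)-\widehat{\cC}(f)]$ on the maps themselves, so that the first bracket is literally Lemma~\ref{LEM: SCATT EST} as a black box and the second solves a single linear transport equation with source $-\tfrac{\wt H-\wh H}{f}\wh w$; this spares you from re-running the $A_1,A_2,A_3$ estimates with the $H$-forcing interleaved.
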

\begin{proof}
For a given function $m(\bx)>0$, let $\wt w$ and $\wh{w}$ be respectively the solution to the transport equations
\begin{equation}\label{EQ: WJ 1}
\begin{array}{rcll}
\bv \cdot \nabla \wt w(\bx, \bv) +(\wt H/m +\sigma_s) \wt w(\bx, \bv) &=& \sigma_s(\bx) \dint_{\bbS^{d-1}} \wt w(\bx, \bv) d\bv, &\mbox{in}\ X\, \\
\wt w(\bx, \bv) &= & \overline{g} &\mbox{on}\  \Gamma_{-}\,
\end{array}
\end{equation}
and
\begin{equation}\label{EQ: WJ 2}
\begin{array}{rcll}
\bv \cdot \nabla \wh{w}(\bx, \bv) +(\wh{H}/{m} +\sigma_s) \wh{w}(\bx, \bv) &=& \sigma_s(\bx) \dint_{\bbS^{d-1}} \wh w(\bx, \bv) d\bv, &\mbox{in}\ X\, \\
\wh{w}(\bx, \bv) &= & \overline{g}  &\mbox{on}\ \Gamma_{-}\,.
\end{array}
\end{equation}
We define the maps $\widetilde{\cC}$ and $\widehat{\cC}$ via the relations:
\begin{equation*}
\widetilde{\cC}(m) := \aver{\wt w}\quad\mbox{and}\quad \widehat{\cC}(m) := \aver{\wh{w}}.
\end{equation*}
Then $\aver{\wt{u}} =\dlim_{n\to\infty} \widetilde{\cC}^n(\overline{g})$ and  $ \aver{\wh{u}} =\dlim_{n\to\infty} \widehat{\cC}^n(\overline{g})$. For convenience, we denote $h := \aver{\wt{u}}$ and $f:=\aver{\wh{u}}$. It is then straightforward to check that
\begin{equation}\label{EQ:Stab Bound}
\begin{aligned}
\left|\frac{h-f}{h\wedge f}\right| &= \left|\frac{\widetilde{\cC}(h) - \widehat{\cC}(f)}{h\wedge f}\right|\le  \left|\frac{\widetilde{\cC}(h) - \widetilde{\cC}(f)}{h\wedge f}\right| +\left| \frac{\widetilde{\cC}(f) - \widehat{\cC}(f)}{h\wedge f}\right|\,.
\end{aligned}
\end{equation}
We first bound the first part on the right hand side. Following Lemma~\ref{LEM: SCATT EST}, we define $\gamma = \sup\left|\frac{h-f}{h\wedge f}\right|$, $\kappa = \sup\frac{{\wt{H}_j}/{(h\vee f)}}{{\wt{H}_j}/{(h\vee f)} +\sigma_s}$, $\mu_h = \sup\frac{\sigma_s h}{{\wt{H}_j}/{h} +\sigma_s}$, and $\mu_f = \sup\frac{\sigma_s h}{{\wt{H}_j}/{(h\vee f)} +\sigma_s}$. Then $0\le \mu_h, \mu_f \le \beta \overline{g}_j$ and $\alpha \ge \kappa$. 
We obtain
\begin{equation*}
\begin{aligned}
\left|\frac{\widetilde{\cC}(h) - \widetilde{\cC}(f)}{ h\wedge f }\right| &\le \frac{\gamma }{h\wedge f }\left[  \kappa\mu_f[1 - (1-\ell_{\Omega})\frac{h-\mu_h}{\overline{g}_j-\mu_h}] + \frac{\overline{g}_j-h}{\overline{g}_j-\mu_h}\overline{g}_j\right]\\
&\le \frac{\gamma}{h\wedge f} \left[\alpha \beta[\overline{g}_j- (1-\ell_{\Omega})\frac{h - \beta\overline{g}_j}{(1-\beta)}] + \frac{\overline{g}_j - h}{1- \beta} \right]\,.
\end{aligned}
\end{equation*}
When $h, f\ge \psi$, it is easy to check that
\begin{equation*}
\frac{1}{h\wedge f} \left[\alpha \beta[\overline{g}_j- (1-\ell_{\Omega})\frac{h - \beta\overline{g}_j}{(1-\beta)}] + \frac{\overline{g}_j - h}{1- \beta} \right] \le r < 1\,.
\end{equation*}
Hence
\begin{equation}\label{EQ:Stab Bound2}
\left|\frac{\widetilde{\cC}(h) - \widetilde{\cC}(f)}{ h\wedge f }\right| \le r\sup\left|\frac{h-f}{h\wedge f}\right|.
\end{equation}

To bound the second term on the right hand side of~\eqref{EQ:Stab Bound}, we take $m = f$ in both~\eqref{EQ: WJ 1} and~\eqref{EQ: WJ 2} and take the difference $\phi = \wt{w} - \wh{w}$. Simple algebra shows that $\phi$ satisfies the following equation:
\begin{equation*}
\begin{array}{rcll}
\bv \cdot \nabla \phi(\bx, \bv) +(\wt{H}/{f} +\sigma_s) \phi(\bx, \bv) &=& \sigma_s(\bx)\dint_{\bbS^{d-1}} \phi(\bx,\bv)d\bv - \frac{\wt{H} - \wh{H}}{f}\wh{w}, &\mbox{in}\ X\, \\
\phi(\bx, \bv) &= & 0, &\mbox{on}\ \Gamma_{-}\,.
\end{array}
\end{equation*}
By standard results from transport theory~\cite{DaLi-Book93-6}, there is a constant $c > 0$ that
\begin{equation*}
\|\phi\|_{L^{\infty}(\Omega)} \le c\|{\wt{H} - \wh{H}}\|_{L^{\infty}(\Omega)}.
\end{equation*}
Therefore, we have
\begin{equation*}
\begin{aligned}
	\left|\frac{\widetilde{\cC}(f) - \widehat{\cC}(f)}{h\wedge f }\right|&\le  \frac{c}{\psi}\|{\wt{H} - \wh{H}}\|_{L^{\infty}(\Omega)}.
\end{aligned}
\end{equation*}
Combining this with~\eqref{EQ:Stab Bound2}, we have the following stability bound:
\begin{equation*}
\left\|{h-f}\right\|_{L^{\infty}(\Omega)} \le \frac{c\overline{g}}{(1-r)\psi}\|{\wt{H}_j - \wh{H}_j}\|_{L^{\infty}(\Omega)}\le \frac{2c}{1-r}\|{\wt{H}_j - \wh{H}_j}\|_{L^{\infty}(\Omega)}.
\end{equation*}
The proof is complete.
\end{proof}

{\small
\bibliography{C:/RenGDrive/Publications/Bibliography/RH-BIB}
\bibliographystyle{siam}
}

\end{document}

